\DeclareSymbolFont{cyrletters}{OT2}{wncyr}{m}{n}
\DeclareMathSymbol{\Sha}{\mathalpha}{cyrletters}{"58}
\DeclarePairedDelimiterX{\infdivx}[2]{}{}{%
	#1\;\delimsize\|\;#2%
}
\newcommand\Frob{{\mathrm{Frob}}}
\newcommand{\Z}{\mathbb{Z}}
\newcommand{\F}{\mathbb{F}}
\newcommand\SL{{\mathrm {SL}}}
\newcommand{\Ga}{\Gamma}
\newcommand{\ga}{\gamma}
\newcommand{\de}{\delta}
\newcommand{\De}{\Delta}
\newcommand{\La}{\Lambda}
\newcommand{\lra}{\longrightarrow}
\newcommand{\C}{\mathbb{C}}
\newcommand\GL{{\mathrm{GL}}}
\newcommand\Hom{\mathrm{Hom}}
\newcommand\HH{{\mathrm H}}
\newcommand{\Q}{\mathbb{Q}}
\newcommand\Gal{{\mathrm{Gal}}}
\newcommand\Ind{{\mathrm{Ind}}}
\newcommand{\cyc}{\mathrm{cyc}}
\newtheorem{thm}{Theorem}[section]
\newtheorem{theorem}[thm]{Theorem}
\newtheorem{cor}[thm]{Corollary}
\newtheorem{prop}[thm]{Proposition}
\newtheorem{lemma}[thm]{Lemma}
\theoremstyle{definition}
\newtheorem{definition}[thm]{Definition}
\newtheorem{remark}[thm]{Remark}
\newtheorem{example}[thm]{Example}
\theoremstyle{definition}
\theoremstyle{remark}
\theoremstyle{remark}
\def\imod#1{\allowbreak\mkern10mu({\operator@font mod}\,\,#1)}
\title{Multiplicities in Selmer groups 
and root numbers for Artin twists}
\author{Somnath Jha, Tathagata Mandal and Sudhanshu Shekhar}
\address{Somnath Jha, jhasom@iitk.ac.in, %Department of Mathematics and Statistics, 
	IIT Kanpur \& %School of Mathematics and Computer Science, 
	IIT Goa, India}
\address{Tathagata Mandal, math.tathagata@gmail.com, %Department of Mathematics and Statistics, 
	IIT Kanpur, India}
\address{Sudhanshu Shekhar, sudhansh@iitk.ac.in, %Department of Mathematics and Statistics, 
	IIT Kanpur, India}
\begin{document}
\begin{abstract}
Let  $K/F$ be a finite Galois extension of number fields and $\sigma$ be an absolutely irreducible, self-dual representation of $\Gal(K/F)$. Let $p$ be an odd prime and consider two elliptic curves $E_1, E_2$  with good, ordinary reduction at primes  above $p$ and equivalent mod-$p$  Galois representations. In this article, we study the variation of the parity of the multiplicities of $\sigma$ in the representation space 
associated to the $p^\infty$-Selmer group of $E_i$ over $K$. We also compare the root numbers for the twist of $E_i/F$ by $\sigma$ 
and show that the $p$-parity conjecture holds for the twist of $E_1/F$ by $\sigma$ if and only if it holds for the twist of $E_2/F$ by $\sigma$.  We also express Mazur-Rubin-Nekov\'a\v{r}'s arithmetic local constants in terms of certain local Iwasawa invariants. 
\end{abstract}
	
	\thanks{2020 MSC classification: Primary:  11G05, 11R23,  11G40, 11F33 Secondary: 11G07 }
	\keywords{elliptic curves, $p$-parity conjecture, Artin twist, Selmer groups, root numbers, congruence of elliptic curves, arithmetic local constants}
	\maketitle

	\section*{Introduction}
	In this article,
	we study the variation of the $p$-parity conjecture  for Artin twists of two elliptic curves over certain number field, that have equivalent mod-$p$ Galois representations.
	\vskip 1mm
	Let  $E$ be an elliptic curve over a number field $K$. Let 
	$W(E/K) \in \{ \pm 1\}$  be the (global) root number (\S \ref{s7}) given by the sign of the expected functional equation of the complex $L$-function  $L(E/K,s)$.  
	Given a rational prime $p$ and a number field $K$, recall the definition of $p^\infty$-Selmer group of $E$ over $K$:
	\begin{small}{\begin{equation} \label{sel0}
			0 \longrightarrow S_p(E/K) \longrightarrow \HH^1(K, E[p^\infty]) 
			{\longrightarrow} \prod_{\text{all prime }u \text{ in } K}^{} \HH^1(K_u,E),
			\end{equation}}\end{small}
	\noindent with $E[p^\infty]:= \underset{{{}n \geq 1}}{\cup}E[p^n](\overline{{{}K}})$.   The Pontryagin dual $S_p(E/K)^\vee:=\mathrm{Hom}_\mathrm{cont}(S_p(E/K), \Q_p/{\Z_p}) $ is a finitely generated $\Z_p$ module and we define $s_p:= \mathrm{rank}_{\Z_p} \ S_p(E/K)^\vee= \mathrm{dim}_{\Q_p} \ S_p(E/K)^\vee \otimes_{\Z_p} \Q_p$. Then the $p$-parity conjecture states that $W(E/K)=(-1)^{s_p}.$

	\vskip 1mm
	
	\par The $p$-parity conjecture is known  for elliptic curves defined over many number fields, due to works of many authors including Dokchitser-Dokchitser, Monsky,  Mazur-Rubin, Nekov\'a\v{r} etc. (cf. \cite{MR2534092}). However for an arbitrary number field $F$, { the} $p$-parity conjecture is not established yet. 
	Now, let $K/F$ be a finite Galois extension of number fields. Then  the complex $L$-function $L(E/K, s)$ can be expressed as a product of  $L(E/F, \sigma, s)$, where $\sigma$ varies over the irreducible Artin representations of Gal$(K/F)$, each with multiplicity deg$(\sigma)$. A variant of the {{}Birch and Swinnerton-Dyer} (BSD) conjecture,  relates for every such $\sigma$, the multiplicity  of $\sigma$ in $E(K)\otimes_\Z \C$ with $\text{ord}_{s=1} ~ L(E/F,\sigma, s)$ (see \cite[\S 4]{ICM}). Thus one naturally considers the $p$-parity conjecture for the twists of $E/F$ by $\sigma$;  relating the multiplicity of the Artin representations in the twisted $p^\infty$-Selmer groups to the root number of the twisted $L$-functions. We need a couple of definitions to state it precisely. 
	\vskip 1mm
	\par  Fixing an embedding of $\overline{\Q}$ into $\overline{\Q}_p$,  an Artin representation $\sigma$ of $\De:= \mathrm{Gal}(K/F)$ can be viewed as  $\sigma: \De \rightarrow \mathrm{GL}_n(\overline{\Q}) \hookrightarrow \mathrm{GL}_n(\overline{\Q}_p)$. For an irreducible representation $\sigma: \De \rightarrow \mathrm{GL}_n(\overline{\Q}_p)$, define $s_p(E,\sigma)$ to be the multiplicity of $\sigma$ in the $\overline{\Q}_p$ representation space   $\mathrm{Hom}_\mathrm{cont}(S_p(E/K), \Q_p/{\Z_p}) \otimes_{\Z_p} \overline{\Q}_p$. On the other hand, for any self-dual irreducible (Artin) representation $\sigma$ of $\De=\mathrm{Gal}(K/F)$, the (global) root number  $W(E/F, \sigma) \in \{ \pm 1\}$ is the sign of the expected functional equation of $L(E/F,\sigma,s)$ i.e. $W(E/F, \sigma) =(-1)^{\mathrm{ord}_{s=1} L(E/F,\sigma,s)}$ (possibly conjectural). However, in the above setting, there is a precise definition of $W(E/F, \sigma)$ due to Deligne \cite{MR0349635}, which does not depend on any conjecture. 
	\vskip 1mm
	
	The twisted $p$-parity conjecture states that for any elliptic curve $E$ defined over $F$ and any self-dual, irreducible representation $\sigma$ of $\De$,
	\begin{small}{\begin{equation}\label{sigma-parity-state} 
			W(E/F, \sigma)=(-1)^{s_p(E/F,\sigma)}.
			\end{equation}}\end{small}
	We refer to \cite[Theorem 1.3]{MR2534092}, where this twisted $p$-parity conjecture is proved for certain number fields $K$, under some hypotheses on $E$.
\par {{} Let $\mu_p$ denote the group  of $p$-th roots of unity. For any number field or an $\ell$-adic field $L$, we  will denote by $L_\cyc$ the cyclotomic $\Z_p$-extension of $L$}. The following is the main theorem (Theorem~\ref{thm4}) of this paper; it studies the variation of $p$-parity conjecture of an Artin twist of two congruent elliptic curves by comparing the ratio of the root numbers of the twists  with the ratio of the multiplicities of the twists in the Selmer groups. 
\begin{theorem}[Theorem \ref{thm4}]\label{mainthm01}
		Let $p$ be an odd prime and  $k$ be a number field such that all primes $v \mid p$ of $k$  are ramified in $k(\mu_p)/k$. Let $E_1$, $E_2$ be two elliptic curves over $k$ with good, ordinary reduction at all the primes $v \mid p$. We assume  $E_1[p] \cong E_2[p]$ as $G_k$-modules. Consider number fields $k\subset F \subset K$   with $K \cap F_\cyc=F$. Let $\sigma: \De=\mathrm{Gal}(K/F) \rightarrow \mathrm{GL}_n(\overline{\Q}_p)$ be an irreducible, self-dual, orthogonal representation.  Further assume that
		\begin{enumerate}
			\item [(i)]
			 $E_1[p](K)=0$.
			\item [(ii)]
			 $S_p(E_1/K_\cyc)[p]$ is finite. 
		\end{enumerate}
Then {\small \begin{equation} \label{p-parity}
	\frac{W(E_1/F,\sigma)}{W(E_2/F,\sigma)}=(-1)^{s_p(E_1/F,\sigma)-s_p(E_2/F,\sigma)}.
			\end{equation}}
		In particular, the $p$-parity conjecture stated in \eqref{sigma-parity-state} holds for the twist of $E_1/F$ by $\sigma$ if and only if it holds for the twist of $E_2/F$ by $\sigma$. \qed
	\end{theorem}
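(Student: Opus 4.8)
The plan is to reduce the comparison of root numbers and Selmer multiplicities for the Artin twist to a purely local computation, combined with the $p$-parity machinery of Nekov\'a\v r. The key structural input is that both sides of \eqref{p-parity} admit a factorization as a product of local terms indexed by places of $F$, and that away from the primes dividing $p$, the bad reduction primes, and the archimedean places, all local contributions cancel because $E_1[p]\cong E_2[p]$ as $G_k$-modules forces the relevant local representations to agree. So the heart of the matter is to control, at each ``interesting'' place $w$ of $F$, the ratio of local root numbers $W(E_1/F_w,\sigma_w)/W(E_2/F_w,\sigma_w)$ and the corresponding local multiplicity difference, and show they match.

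First I would set up the cyclotomic Iwasawa-theoretic framework: over $K_\cyc$ (noting $K\cap F_\cyc=F$ ensures $\mathrm{Gal}(K_\cyc/F_\cyc)\cong\De$, so $\sigma$ extends to a representation of $\mathrm{Gal}(K_\cyc/F_\cyc)$ and twisting makes sense compatibly with the cyclotomic variable), one attaches to the twist of $E_i$ by $\sigma$ a Selmer group whose Pontryagin dual is a finitely generated torsion $\Lambda$-module, using hypothesis (ii) together with $E_1[p]\cong E_2[p]$ to transfer finiteness/torsion-ness from $E_1$ to $E_2$. Hypothesis (i), $E_1[p](K)=0$, removes the $H^0$ ambiguities and guarantees that $S_p(E_i/K)$ and $S_p(E_i/K_\cyc)^{\Gamma}$ are comparable up to finite controlled error, so that the parity of $s_p(E_i/F,\sigma)$ can be read off from the parity of the $\sigma$-component of the $\Lambda$-corank, equivalently from a $\lambda$-invariant. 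Then $s_p(E_1/F,\sigma)-s_p(E_2/F,\sigma)$ becomes a difference of Iwasawa $\lambda$-invariants of congruent modules, which by the standard congruence-of-Iwasawa-invariants philosophy is a sum of local discrepancies at the primes of bad reduction and the primes above $p$ — precisely the places where $E_1$ and $E_2$ can genuinely differ despite sharing $E_1[p]$.

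Next I would invoke Deligne's formula expressing the global root number $W(E_i/F,\sigma)$ as a product of local root numbers $\prod_w W(E_i/F_w,\sigma_w)$ over places $w$ of $F$, and observe that at a place $w$ where $E_1/F_w$ and $E_2/F_w$ have the same (or congruent-enough) local behavior the local factors agree; the residual ratio is again a product over bad primes, primes above $p$, and real places. At the real places and at primes of good reduction away from $p$ the local root numbers depend only on $\sigma_w$ and the conductor exponent, which is unchanged, so these cancel. What remains is to show, place by place among the finitely many ``interesting'' $w$, that $(-1)^{(\text{local }\lambda\text{-discrepancy at }w)}$ equals $W(E_1/F_w,\sigma_w)/W(E_2/F_w,\sigma_w)$. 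This is exactly where the paper's identification of Mazur--Rubin--Nekov\'a\v r arithmetic local constants with local Iwasawa invariants (advertised in the abstract) does the work: the arithmetic local constant at $w$ is defined so that its sign governs the local root number ratio, and the paper reinterprets it as a local $\lambda$-type invariant, making the two sides of \eqref{p-parity} literally the same product.

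The main obstacle I expect is the place-by-place local analysis at the primes above $p$, where $E_i$ has good ordinary reduction: there one must understand the $\sigma$-isotypic piece of the local Iwasawa cohomology (or the local condition defining the Selmer group) for $E_1[p^\infty]$ versus $E_2[p^\infty]$, and although $E_1[p]\cong E_2[p]$, the full $p$-adic Tate modules differ, so one needs the ordinary filtration and the fact that the cyclotomic local condition is ``determined mod $p$'' in a suitable sense — this requires either an explicit computation with the formal group / unramified quotient, or an appeal to a local triangulation result, and keeping track of the twist by the (possibly ramified, possibly wildly ramified) $\sigma_w$ is delicate. A secondary technical point is ensuring the orthogonality hypothesis on $\sigma$ is genuinely used to rule out the symplectic sign contributions (which would otherwise spoil the parity bookkeeping), and that the extension/compatibility of $\sigma$ across $K\cap F_\cyc=F$ is handled cleanly. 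Once the local matching is in place, the ``in particular'' statement is immediate: if \eqref{sigma-parity-state} holds for $E_2$, multiply by \eqref{p-parity} to get it for $E_1$, and conversely.
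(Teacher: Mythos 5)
Your overall strategy — express both sides of \eqref{p-parity} as products of local terms and match them place by place — is the right shape, but the mechanism you propose for the matching is not the one the paper uses and would in fact fail under the stated hypotheses. You identify the engine as ``the paper's identification of Mazur--Rubin--Nekov\'a\v{r} arithmetic local constants with local Iwasawa invariants,'' but that identification (Proposition \ref{apencomppro}) is presented in the paper only as an \emph{a posteriori} compatibility check, in a separate section, and it requires the hypothesis (Syl) that $E_1[p]\cong E_2[p]$ \emph{symplectically}. Theorem \ref{thm4} does not assume (Syl); the paper explicitly points this out in Remarks \ref{symplecompr1} and \ref{remarklast1} and explains that arithmetic local constants alone cannot give Theorem \ref{thm4}. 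So routing the argument through Nekov\'a\v{r}'s local constant formula \eqref{apenrootalc} introduces an assumption that is absent from the theorem, and the gap is genuine.

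What the paper actually does is prove two independent statements by direct case-by-case local analysis and then multiply them: on the Selmer side, Theorem \ref{thm1} reduces $s_p(E_1,\sigma)-s_p(E_2,\sigma)\pmod 2$ to differences of local Iwasawa invariants $\delta_{E_i,v}(\sigma)$ over a finite set $\Sigma_0$ of primes \emph{away from} $p$, after first showing $\lambda_{E_1}^{\Sigma_0}(\sigma)=\lambda_{E_2}^{\Sigma_0}(\sigma)$ (Theorem \ref{thm2}); on the analytic side, Theorem \ref{propo4} computes $W(E_1/F_v,\sigma_v)/W(E_2/F_v,\sigma_v)$ directly from Rohrlich's formulas in Theorem \ref{root}, again case by case on reduction type, and finds it matches $(-1)^{\delta_{E_1,v}(\sigma)-\delta_{E_2,v}(\sigma)}$ at every $v\in\Sigma_0$. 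This also resolves the obstacle you flag at primes above $p$: the set $\Sigma_0$ excludes $v\mid p$, the root numbers at $v\mid p$ agree trivially by good reduction (Theorem \ref{root}(ii)), and on the Selmer side the only input at $p$ is that the maximal unramified quotients $\widetilde E_{1,v}[p]$ and $\widetilde E_{2,v}[p]$ agree (used inside the proof of Theorem \ref{thm2} to identify the imprimitive Selmer groups mod $\pi$) — no ordinary filtration comparison or local triangulation result for the twisted local condition is needed. So the delicate $p$-local analysis you anticipate is not present in the paper's proof; the real delicacy, which your proposal does not mention, is the case analysis at bad primes away from $p$ (especially the additive cases, $p=3$, and $v\mid 2,3$), and that is done by hand via \eqref{3891}--\eqref{3894} and Rohrlich's theorems rather than via arithmetic local constants.
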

	Using Theorem \ref{thm4}, we give  new examples of $E, F, K$ and $\sigma$ (see Example \ref{example:1} \&  Example \ref{example2}) such that the $p$-parity conjecture in \eqref{sigma-parity-state} holds  for $(E, F, \sigma)$, under some mild condition related to certain $\mu$-invariants.  
	
	{{}In Theorem \ref{thm4}, it is assumed that $\sigma$ is orthogonal. However, as we point out in Remark \ref{dcjcjkecje},  when $\sigma$ is  symplectic, the results stated in Theorem \ref{thm4} are known to hold trivially, assuming the non-degeneracy of the $p$-adic height pairing.} 
	
	In the reverse situation, where $E$ is fixed and $\rho_1, \rho_2$ are two self-dual Artin representation of $\De$ such that $\tilde{\rho}_1 \cong \tilde{\rho}_2$ (see \S \ref{sel1}), the variation of the $p$-parity conjecture has already been discussed by Greenberg (see Remark \ref{revsit}).
	\vskip 1mm
	
	To prove Theorem \ref{thm4}, we study  the parity of ${s_p(E_1/F,\sigma)-s_p(E_2/F,\sigma)}$ and $\frac{W(E_1/F,\sigma)}{W(E_2/F,\sigma)}$ separately. It is well known (Prop. \ref{prop1}) that $s_p(E_i/F,\sigma) $ and the multiplicity of $\sigma$, $\lambda_{E_i}(\sigma)$ in $S_p(E_i[p^\infty]\otimes\sigma/F_\cyc)^\vee$ have the same parity. Keeping this in mind, we apply Iwasawa theoretic methods to compare the parity of $\lambda_{E_1}(\sigma)$ with $\lambda_{E_2}(\sigma)$ for the congruent  elliptic curves.  From this, it reduces to the comparison of the parity of local Iwasawa invariants $\delta_{E_i,v}(\sigma)$, for $v \in \Sigma_0$; a finite set  of primes of $F$, away from $p$. But  this Iwasawa theoretic set up works, only when {{} we} assume some conditions, which are stated in Hypothesis (H) in \S \ref{sel1}. On the other hand, the global root number is given by product of all the local root numbers. Using this, we compare the local root numbers $W(E_i/F_v,\sigma_v)$ at each prime $v$ and then go on to show that the ratio of the global root numbers of the Artin twists are also given by the parity of precisely the  same local Iwasawa invariants, away from $p$; thereby proving  Theorem \ref{thm4}.
	\vskip 1mm
	In fact, the comparison of the parity $s_p(E_1/F,\sigma)$ with $s_p(E_2/F,\sigma)$ is established in Theorem \ref{thm1} and the root numbers of the Artin twists were compared in Theorem \ref{propo4}. We now state  Theorem \ref{thm1}, which we think is also independently interesting. Let $\Sigma$ be a finite set of primes of $F$ containing the primes above $p$,  the infinite primes, the primes of bad reduction of $E_1$ and $E_2$ and primes which ramify in $K/F$ and $\Sigma_0$ is certain subset of $\Sigma$ away from $p$, defined in \eqref{Sigma_0}. We denote by $S_i, N_i,  W, X, Y_3, Z_3$  certain subsets of $\Sigma_0$, defined in \ref{exp-prime}.

	\begin{theorem}[Theorem \ref{thm1}] \label{thm1intro}
		Let $p$ be an odd prime and $k$ be a number field such that all primes of $k$ lying above $p$ are ramified in $k(\mu_p)/k$. Let $E_1, E_2$ be two elliptic curves over $k$ with $E_1[p] \cong E_2[p]$ as $G_k$-modules. Let $\sigma$ be an absolutely irreducible self-dual orthogonal representation of $\De$. Assume the hypothesis {\bf (H)}. Then \begin{small}{\begin{eqnarray*}
					&&s_p(E_1, \sigma) + \sum_{v \in S_1} \langle \sigma_v, 1_v \rangle 
					+
					\sum_{v \in N_1}^{} \langle \sigma_v, \varkappa_v \rangle  + \sum_{v\in W} \big( \langle \sigma_v, \vartheta_v \rangle +  \langle \sigma_v,  \omega_v\vartheta_v \rangle \big)  + \sum_{v \in Z_3} \big( \langle \sigma_v, \omega_v\vartheta_{v} \rangle +  \langle \sigma_v, \vartheta_{v}\theta_{3,v} \rangle \big)\\
					& & \equiv s_p(E_2,\sigma) + \sum_{v \in S_2} \langle \sigma_v, 1_v \rangle 
					+
					\sum_{v \in N_2}^{} \langle \sigma_v, \varkappa_v \rangle +
					\sum_{v \in X} \langle \sigma_v, \vartheta_{v} \rangle
					+ \sum_{v \in Y_3}  \big(   \langle \sigma_v,   1_v \rangle  + \langle \sigma_v,   \varkappa_v \rangle +  \langle \sigma_v,   \theta_{3,v}\rangle\big) 
					\pmod{2}. 
		\end{eqnarray*}}\end{small}
		(Here 
		$\langle \beta, \alpha \rangle$ denotes the multiplicity of $\alpha$ in $\beta$, see \S \ref{sel1} for the  definitions  of the representations involved.)  
	\end{theorem}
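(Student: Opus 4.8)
The plan is to reduce the asserted congruence to a comparison of cyclotomic Iwasawa invariants and then to a prime-by-prime local computation. First, by Proposition~\ref{prop1} the integer $s_p(E_i,\sigma)$ has the same parity as $\lambda_{E_i}(\sigma)$, the multiplicity of $\sigma$ in $S_p(E_i[p^\infty]\otimes\sigma/F_\cyc)^\vee$; so it suffices to prove the displayed congruence with each $s_p(E_i,\sigma)$ replaced by $\lambda_{E_i}(\sigma)$. Following the Greenberg--Vatsal strategy, I would introduce the $\Sigma_0$-imprimitive Selmer groups $S^{\Sigma_0}_p(E_i[p^\infty]\otimes\sigma/F_\cyc)$, obtained by relaxing the local conditions at all primes above $\Sigma_0$, and use the fundamental exact sequence
\[
0 \longrightarrow S_p(E_i[p^\infty]\otimes\sigma/F_\cyc) \longrightarrow S^{\Sigma_0}_p(E_i[p^\infty]\otimes\sigma/F_\cyc) \longrightarrow \bigoplus_{v\in\Sigma_0}\ \bigoplus_{w\mid v}\mathcal H_w(E_i,\sigma) \longrightarrow 0,
\]
the exactness on the right (surjectivity of the global-to-local map onto the relaxed components, together with cotorsionness over the relevant Iwasawa algebra and Mazur's control theorem in the twisted setting) being exactly what Hypothesis~\textbf{(H)} is designed to provide. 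Passing to $\lambda$-invariants gives $\lambda_{E_i}(\sigma)=\lambda^{\Sigma_0}_{E_i}(\sigma)-\sum_{v\in\Sigma_0}\delta_{E_i,v}(\sigma)$, where $\delta_{E_i,v}(\sigma)$ is the local Iwasawa invariant of $\bigoplus_{w\mid v}\mathcal H_w(E_i,\sigma)$.

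The hypothesis $E_1[p]\cong E_2[p]$ now enters through the global term. Because the $\mu$-invariants vanish (built into Hypothesis~\textbf{(H)}), the mod-$p$ reductions of the Pontryagin duals of the imprimitive Selmer groups depend only on the common residual module $E_1[p]\cong E_2[p]$ twisted by $\sigma$; hence $S^{\Sigma_0}_p(E_1[p^\infty]\otimes\sigma/F_\cyc)^\vee$ and $S^{\Sigma_0}_p(E_2[p^\infty]\otimes\sigma/F_\cyc)^\vee$ have equal $\lambda$-invariants, i.e.\ $\lambda^{\Sigma_0}_{E_1}(\sigma)=\lambda^{\Sigma_0}_{E_2}(\sigma)$. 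Substituting this into the two copies of the identity above and subtracting, the theorem is reduced to the purely local statement that for each $i$ the sum $\sum_{v\in\Sigma_0}\delta_{E_i,v}(\sigma)$ is congruent modulo $2$ to the sum of $\langle\sigma_v,\cdot\rangle$-terms displayed on the $E_i$-side of the asserted congruence.

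It remains to compute $\delta_{E_i,v}(\sigma)$ for each $v\in\Sigma_0$. By Shapiro's lemma the sum over $w\mid v$ collapses; local Tate duality and the known structure of $H^1(F_{\cyc,w},E_i[p^\infty])$ (determined by the reduction type of $E_i$ at $v$, the action of Frobenius and inertia, and the behaviour of the relevant component group) identify $\bigoplus_{w\mid v}\mathcal H_w(E_i,\sigma)$ up to the invariant we need, yielding $\delta_{E_i,v}(\sigma)=\langle\sigma_v,\Xi_{i,v}\rangle$ for an explicit representation $\Xi_{i,v}$ of the decomposition group at $v$, whose isomorphism type also records whether $v$ ramifies or splits in $F_\cyc/F$ and in $K/F$. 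Sorting the primes of $\Sigma_0$ into the sets $S_i,N_i,W,X,Y_3,Z_3$ of \ref{exp-prime} according to these local invariants of the pair $(E_1,E_2)$, one matches $\Xi_{i,v}$ with $1_v$, $\varkappa_v$, $\vartheta_v$, $\omega_v\vartheta_v$, $\vartheta_v\theta_{3,v}$ and $\theta_{3,v}$ as listed; the contributions genuinely common to $E_1$ and $E_2$ cancel, leaving precisely the displayed sums, and combining with $\lambda^{\Sigma_0}_{E_1}(\sigma)=\lambda^{\Sigma_0}_{E_2}(\sigma)$ completes the proof.

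I expect the main obstacle to be this last step: the case-by-case local computation of $\delta_{E_i,v}(\sigma)$, in which one must correctly track the interaction of the local reduction data of $E_i$ with both the cyclotomic tower and the extension $K/F$, and verify that exactly the ``common'' contributions cancel so that only the six listed families of representations survive --- this bookkeeping is what forces the many correction terms in the statement. A secondary technical point is confirming that Hypothesis~\textbf{(H)} genuinely yields the surjectivity in the fundamental exact sequence and the control theorem for the $\sigma$-twisted module, so that no spurious $H^2$ contributions appear.
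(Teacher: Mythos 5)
Your proposal reproduces the paper's strategy essentially verbatim: pass from $s_p(E_i,\sigma)$ to $\lambda_{E_i}(\sigma)$ via Proposition~\ref{prop1}, introduce the $\Sigma_0$-imprimitive Selmer groups (Corollaries~\ref{colmarch}--\ref{colfeb27} and equation~\eqref{relation}), prove $\lambda^{\Sigma_0}_{E_1}(\sigma)=\lambda^{\Sigma_0}_{E_2}(\sigma)$ by comparing the residual twisted Selmer groups (Theorem~\ref{thm2}), and then do the case-by-case reduction-type analysis of $\delta_{E_i,v}(\sigma)$ for $v\in\Sigma_0$ (Lemmas~\ref{lem4}--\ref{lem5}, summarized in Table~\ref{table2}). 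The only minor imprecision is that the relation~\eqref{relation} carries a factor $s_v$ (the number of primes of $F_\cyc$ above $v$), which you drop; since $p$ is odd, $s_v$ is odd and the congruence mod $2$ is unaffected, so this does not create a gap.
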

	We would like to mention that our proof is more involved when the prime $p=3$. In addition, we also need delicate arguments to compare the local Iwasawa invariants $\delta_{E_i,v}(\sigma)$ and local root number of Artin twists at primes $v$ of additive reductions, away from $p$. Moreover, the study of ${\delta_{E_1,v}(\sigma)}-{\delta_{E_2,v}(\sigma)}$ and ${W(E_1/F_v, \sigma_v)}/{W(E_2/F_v, \sigma_v)}$ at primes $v \nmid p$ of $F$ such that  $v \mid 2$ or $v \mid 3$ needs careful consideration. We have covered all such possibilities without any exception. 
	
	\medskip
	
	Mazur-Rubin have developed arithmetic theory of local constants which has been used extensively in Nekov\'a\v{r}'s study of the $p$-parity conjecture.  Nekov\'a\v{r}'s setting applies when we make an additional hypothesis  that the Galois isomorphism between $E_1[p]$ and $E_2[p]$ is symplectic. In this symplectic isomorphism setting,  Nekov\'a\v{r} \cite{ne} expresses the ratio of the local root numbers of two congruent mod-$p$ curves, at a prime $v \nmid p$, in terms of arithmetic local constant at that prime. Moreover, using Mazur-Rubin's results, it is possible to express the {{}S}elmer corank in terms of arithmetic local constants at various primes, including primes dividing $p$ (see \eqref{apenalgbe}). This naturally suggest{{}s} that  the parity of our local Iwasawa invariants $\delta_{E_i,v}(\sigma), v \nmid p$ are related to this arithmetic local constant. We establish this relation  by directly comparing   the difference of the local Iwasawa invariants at a prime away from $p$  with the arithmetic local constant at that prime (Proposition \ref{apencomppro}). We can think  {{}of}  this phenomenon as an Iwasawa theoretic interpretation of the arithmetic local constants.

	However, as mentioned in Remark \ref{remarklast1}, the situation at $v=p$ is more complicated (see \cite{ne} and his subsequent works) and even in the symplectic isomorphism setting, we believe Theorem \ref{thm4} does not follow only from the theory of arithmetic local constants.

	\par Our results are a generalization of results obtained in \cite{MR3503694} and \cite{MR3629245}. In particular, when $\sigma$ is the trivial representation,  \cite[Theorem  5.5]{MR3629245} follows from our Theorem \ref{thm4} and  \cite[Theorem $1.1$]{MR3503694} follows from our Theorem \ref{thm1}.  We stress that the results in \cite{MR3503694} and \cite{MR3629245} only concerns the $p$-parity conjecture for the trivial twist and  this generalization to a non-trivial twist in this article is not straightforward. The $p$-parity conjecture, for a non-trivial twist is much less understood, compared to the trivial twist case (cf. \cite[Theorem 1.3]{MR2534092}) and we undertake this study in this article. Also, we give a  direct comparison and  description of Mazur-Rubin-Nekov\'a\v{r}'s  arithmetic local constants  in terms of (a priori different) explicit  local Iwasawa invariants  (Proposition \ref{apencomppro}); this was not considered earlier. 	Further, in this article, we give   new examples of $E, F, K, \sigma$ such that the $p$-parity conjecture for the twist of $E/F$ by $\sigma$ holds, under some mild assumption related to certain $\mu$-invariants (Examples \ref{example:1} \&  \ref{example2}).	

\medskip

	The summary and structure of this article is as follows: The basic algebraic set up is discussed in section \ref{sel1} and we collect all the relevant known results related to $s_p(E/F,\sigma)$ in this section, many of which are due to Greenberg. In subsection \ref{s2.1},  we consider  imprimitive Selmer groups and  prove that the multiplicity of  $\sigma$ in the imprimitive $\Sigma_0$ Selmer group, with our suitably chosen $\Sigma_0$, coincides for the congruent mod-$p$ elliptic curves. Note that, here we make an improvement on methods of \cite{gv}, by choosing a  $\Sigma_0$  which is smaller than the corresponding set of \cite{gv}.   This reduces the study of ${s_p(E_1/F,\sigma)-s_p(E_2/F,\sigma)} \pmod 2$ to the study of  local invariants at ${\delta_{E_1,v}(\sigma)}-{\delta_{E_2,v}(\sigma)}$ for each $v \in \Sigma_0$, which we do in subsection \ref{sub1}. Combining these results, we prove Theorem \ref{thm1} in subsection \ref{mainspei}, which completes the comparison of $s_p(E_i/F,\sigma)$.   Subsection \ref{s5} contains basic known results on root numbers, many of them are due to Rohrlich. We compare the root numbers $\frac{W(E_1/F,\sigma)}{W(E_2/F,\sigma)}$ in subsection \ref{rootnumbereisigma} and express it in terms of local Iwasawa invariants ${\delta_{E_i,v}(\sigma)},$ where $ v \in \Sigma_0$. Using this, we prove our main result, Theorem \ref{thm4} in subsection \ref{finalrusyl}.  Then we  discuss numerical examples of $p$-parity conjecture for an Artin twist of an elliptic curve,  as applications of our result, in subsection \ref{examplessubsec}. Finally, in section \ref{compmrnjms}, we directly  compare local Iwasawa invariant{{}s} at a prime away from $p$ with Mazur-Rubin and Nekov\'a\v{r}'s arithmetic local constants and show our methods are consistent with the theory of arithmetic local constant{{}s}.

	\medskip
	
	\noindent {\bf Notation:} %For a discrete, $p$-primary  $\Z_p[[\Gamma]] $ module $M$, we denote the Pontryagin dual $M^\vee: = \mathrm{Hom}_{\mathrm{cont}} (M,\Q_p/{\Z_p})$. For a number field or an $\ell$-adic field $L$, $L_\cyc$ will be the cyclotomic $\Z_p$-extension of $L$ with $\Gamma:= \Gal(L_\cyc/L) \cong \Z_p$. The Iwasawa algebra of $\Gamma$ is denoted by $\Lambda:=\Z_p[[\Ga]]$. For any separable field $L$, $G_L$ will denote the Galois group $\Gal(\overline{L}/L)$.
	{{} For a number field or an $\ell$-adic field $L$, $L_\cyc$ will be the cyclotomic $\Z_p$-extension of $L$ with $\Gamma:= \Gal(L_\cyc/L) \cong \Z_p$. Recall, the Iwasawa algebra of $\Gamma$,  denoted by $\Lambda$, is defined as $\La=\Z_p[[\Ga]]:=\underset{n}{\varprojlim} \ \Z_p[\frac{\Ga}{\Ga^{p^n}}]$. For a discrete $\La$ module $M$, we denote the Pontryagin dual $\mathrm{Hom}_{\mathrm{cont}} (M,\Q_p/{\Z_p})$ by $M^\vee$. For any separable field $L$, $G_L$ will denote the Galois group $\Gal(\overline{L}/L)$ and $I_L$ denotes the  inertia subgroup of $G_L$. For an algebraic extension $L$ of $\Q$, the completion of $L$ at a prime $v$ of $L$ is denoted by  $L_v$.}

\section{The algebraic set up}\label{sel1}
	Let $p$ be an odd prime and $\mu_p$ denotes the  group of $p$-th roots of unity. Let $k$ be a number field such that all primes of $k$ lying above $p$ are ramified in $k(\mu_p)/k$. Now $F$ be any number field containing $k$. Consider a  finite Galois extension $K/F$ such that $K \cap F_\cyc=F$, where $F_\cyc$ is the cyclotomic $\Z_p$-extension of $F$. We  set $\De:=\Gal(K/F)$.  
	
	\medskip
	
	\par Let $E$ be an elliptic curve over $k$ with good, ordinary reduction at all primes of $k$ lying above $p$. Let $N^F_E$ and $\overline{N}^F_E$ respectively denote the   conductor of $E$ and the prime-to-$p$ part of the  conductor of the $G_F$ module $E[p]$. Here {{}the} conductor of $E[p]$ is defined following Serre and can be found in \cite[\S1, Page 135]{li}.  {{} Let $T_pE$ denote} the Tate module of $E$ and $\rho_{E}: G_k \to \mathrm{Aut}(T_pE) \cong \GL_2(\Z_p)$ be the Galois representation associated to $E$.  Let  $\Sigma
	$ be a finite set of places of $F$ containing the primes of bad reduction of $E$, the primes above $p$, the infinite primes and primes that  ramify in $K/F$.  We write $F_\Sigma$ for the maximal algebraic extension of $F$ unramified outside $\Sigma$.

	\medskip

	Given an irreducible Artin representation $\sigma: \De \lra \mathrm{GL}_n(\overline{\Q}_p)$, 
	we can view $\sigma$ as an absolutely irreducible representation 
	$\sigma: \De \lra \mathrm{GL}_n(\mathcal F)$, where $\mathcal F$ is some finite extension of $\Q_p$
	with ring of integer $O$, residue field $\mathfrak{f}$ and uniformizer $\pi$. 
	Let $L_\sigma$ be a $\De$-invariant $O$-lattice in the underlying $\mathcal F$ representation space 
	$W_\sigma$ 
	for $\sigma$ and set $\text{dim}(\sigma) = \text{dim}_{\mathcal F} \ W_\sigma$. We will denote the $G_F$ module $E[p^\infty] \otimes_{\Z_p} L_{\sigma}$ by $E[p^\infty] \otimes \sigma$. 
	Reducing modulo $\pi$, we obtain a representation $\widetilde{\sigma}$ over $\mathfrak{f}$ and let $U_{\widetilde{\sigma}}$ denote the $\mathfrak{f}$-representation space 
	for $\widetilde{\sigma}$. The $G_F$ module $E[p] \otimes_{\mathbb{F}_p} U_{\widetilde{\sigma}}$ is denoted by $E[p] \otimes \widetilde{\sigma}$.  
	We  have $(E[p^\infty] \otimes \sigma)[\pi] \cong E[p] \otimes \widetilde{\sigma}$.

	\medskip
	
	\par Let  $v,u,w,\eta$ respectively denote primes in $F,K,F_\cyc, K_\cyc$ such that %$\eta\mid w\mid u \mid v$. 
	{{} $\eta\mid u \mid v$ and $\eta \mid w \mid v$}. Let $\widetilde{E}_v$ denote the reduction of $E$ at a prime $v \mid p$ of $F$. As $E$ is  good, ordinary  at all primes $v$ above $p$,  $\widetilde{E}_v[p^\infty]:= \bigcup_n \widetilde{E}_v[p^n]$ is an unramified $G_{F_v}$-module for every $v \mid p$. Set
	\begin{small}{\begin{equation}\label{189}
			\mathcal{H}_v(F_\cyc,E[p^\infty]\otimes \sigma): =
			\begin{cases} 
			\prod_{w \mid v}^{} 
			\HH^1(F_{\cyc,w}, E[p^\infty] \otimes \sigma) & \text{if } v \nmid p \\
			\prod_{w \mid v}^{} 
			\HH^1(F_{\cyc,w}, \widetilde{E}_v[p^\infty] \otimes \sigma)
			& \text{if } v \mid  p.
			\end{cases}
			\end{equation}
			\begin{equation}\label{190}
			\mathcal{H}_v(F_\cyc,E[p]\otimes \widetilde{\sigma}): =
			\begin{cases} 
			\prod_{w \mid v}^{} 
			\HH^1(F_{\cyc,w}, E[p] \otimes \widetilde{\sigma}) & \text{if } v \nmid p \\
			\prod_{w \mid v}^{} 
			\HH^1(F_{\cyc,w}, \widetilde{E}_v[p] \otimes \widetilde{\sigma}) 
			& \text{if } v \mid  p. 
			\end{cases} 
			\end{equation}
			and 
			\begin{equation}\label{389}
			\mathcal{H}_u(K_\cyc,E): =
			\begin{cases} 
			\prod_{\eta \mid u}^{} 
			\HH^1(K_{\cyc,\eta}, E[p^\infty] ) & \text{if }  u \mid v,  u \nmid  p, v \in \Sigma.  \\
			\prod_{\eta \mid u}^{} 
			\HH^1(K_{\cyc,\eta}, \widetilde{E}_v[p^\infty] )
			& \text{if } u \mid p.
			\end{cases}
			\end{equation}
	}\end{small}
	
	\medskip
	
	For any subset $\Sigma' \subset \Sigma \setminus \{v \text{ prime in } F: v \mid p\infty\}$, we define various \emph{imprimitive} $\Sigma'$-Selmer groups as follows:
	\begin{small}{
			\begin{equation} \label{exact2}
			0 \longrightarrow S_p^{\Sigma'}(E[p^\infty] \otimes \sigma/F_\cyc) \longrightarrow
			\HH^1(F_\Sigma/F_\cyc, E[p^\infty] \otimes \sigma) 
			\overset{\phi}{\longrightarrow}
			\prod_{v \in \Sigma \setminus \Sigma'}^{} 
			\mathcal{H}_v(F_\cyc, E[p^\infty]\otimes \sigma).
			\end{equation}
			\begin{equation} \label{exact3}
			0 \longrightarrow S_p^{\Sigma'}(E[p] \otimes \widetilde{\sigma}/F_\cyc) \longrightarrow
			\HH^1(F_\Sigma/F_\cyc, E[p] \otimes \widetilde{\sigma}) 
			\longrightarrow \prod_{v \in \Sigma \setminus \Sigma'}^{} 
			\mathcal{H}_v(F_\cyc, E[p]\otimes \widetilde{\sigma}).
			\end{equation}}\end{small}
	\begin{small}{\begin{equation} \label{exact4}
			0 \longrightarrow S_p^{\Sigma'}(E/K_\cyc) \longrightarrow
			\HH^1(F_\Sigma/K_\cyc,, E[p^\infty]) 
			\longrightarrow
			\prod_{ u \mid v, v \in \Sigma \setminus  \Sigma'}^{} 
			\mathcal{H}_u(K_\cyc, E), 
			\end{equation}
	}\end{small}
We also recall that by our choice of $\Sigma$, we have 
	\begin{small}{\begin{equation} \label{exact5}
			0 \longrightarrow S_p(E/K_\cyc) \longrightarrow
			\HH^1(F_\Sigma/K_\cyc,, E[p^\infty]) 
			\longrightarrow
			\prod_{u \mid v, v \in \Sigma}^{} 
			\mathcal{H}_u(K_\cyc, E).
			\end{equation}}
	\end{small}
	
	\medskip
	
	Now assume that $E$ has good, ordinary reduction at all primes $v $ of $F$ dividing $p$ and also assume  that $S_p(E/K_\cyc)[p]$ is finite.  Then $S_p(E/K_\cyc)^\vee$ 
	is a finitely generated $\Z_p$-module.  For any irreducible representation $\sigma$ of $\De$ over $\mathcal{F}$,  we define $\lambda_E(\sigma)$  (respectively $\lambda_E^{\Sigma'}(\sigma)$) to be the multiplicity of $\sigma$ in the $\mathcal F$ representation space  $S_p(E/K_\cyc)^\vee \otimes_{\Z_p} \mathcal F$ (respectively $S^{\Sigma'}_p(E/K_\cyc)^\vee\otimes_{\Z_p} \mathcal F.$
	%Similarly $\lambda_E^{\Sigma'}(\sigma)$ is defined corresponding to the imprimitive Selmer group $S^{\Sigma'}_p(E/K_\cyc)^\vee.$ 
	
	\medskip
	
\par Then under the hypotheses {{}that} $E$ is good, ordinary at primes dividing $p$ with  $S_p(E/K_\cyc)[p]$ finite,  it is shown \cite[Proposition $4.3.1$]{MR2807791} that
	\begin{small}{\begin{equation}\label{897}
			\lambda_E^{\Sigma'}(\sigma)=
			\mathrm{rank}_O \ S_p^{\Sigma'}(E[p^\infty] \otimes \sigma/F_\cyc)^\vee.
			\end{equation}}\end{small}
Moreover we have the following result of Greenberg:
\begin{lemma} \label{L2}\cite[Prop. 4.3.3]{MR2807791}
		Let $E$ be good, ordinary at primes of $F$ dividing $p$. Assume that either $E[p](K)=0$ or   $\Sigma'$ is non-empty. Also assume that $S_p(E/K_\cyc)[p]$ {{}is} finite. Then $S_p^{\Sigma'}(E[p^\infty] \otimes \sigma/F_\cyc)$ is $\pi$-divisible. In particular, {{}the} maximal pseudo-null $\Lambda$-submodule of $S_p^{\Sigma'}(E[p^\infty] \otimes \sigma/F_\cyc)^\vee$ is $0$. \qed 
	\end{lemma}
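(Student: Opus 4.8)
The plan is to realize $S_p^{\Sigma'}(E[p^\infty]\otimes\sigma/F_\cyc)$ inside the Selmer group $S_p(E/K_\cyc)$ (or a closely related group over $K_\cyc$) by an ``induction/restriction'' comparison, and then transfer the hypothesis that $S_p(E/K_\cyc)[p]$ is finite into a statement about $\pi$-cotorsion behaviour of the twisted Selmer group. Concretely, since $\sigma$ factors through $\De=\Gal(K/F)$ and $L_\sigma$ is a $\De$-invariant $O$-lattice, Shapiro's lemma identifies $\HH^1(F_\Sigma/F_\cyc, E[p^\infty]\otimes\sigma)$ with a direct summand of $\HH^1(F_\Sigma/K_\cyc, E[p^\infty])\otimes_{\Z_p}(\text{something})$, compatibly with the local conditions defining the $\Sigma'$-imprimitive Selmer group. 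First I would make this comparison precise so that $S_p^{\Sigma'}(E[p^\infty]\otimes\sigma/F_\cyc)^\vee$ becomes (up to a finitely generated $O$-module that is killed by a power of $\pi$) a quotient/sub of $S_p^{\Sigma'}(E/K_\cyc)^\vee\otimes_{\Z_p}O$, using that $K\cap F_\cyc=F$ so that $K_\cyc/F_\cyc$ has Galois group $\De$.

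The key step is then the following: a cofinitely generated $O$-cofree (equivalently $\pi$-divisible) module is detected by its $\pi$-torsion being ``small'' in the appropriate sense. I would show that $S_p^{\Sigma'}(E[p^\infty]\otimes\sigma/F_\cyc)[\pi]$ injects, via the mod-$\pi$ comparison map, into $S_p^{\Sigma'}(E[p]\otimes\widetilde\sigma/F_\cyc)$ as in \eqref{exact3}, and that the cokernel of $S_p^{\Sigma'}(E[p^\infty]\otimes\sigma/F_\cyc)\xrightarrow{\pi} S_p^{\Sigma'}(E[p^\infty]\otimes\sigma/F_\cyc)$ is controlled by $\HH^0$ and $\HH^1$ of the relevant local and global Galois cohomology of $E[p]\otimes\widetilde\sigma$. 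The hypothesis ``$E[p](K)=0$ or $\Sigma'\neq\emptyset$'' enters exactly here: when $E[p](K)=0$ one gets $E[p]^{G_{K_\cyc}}=0$ (using that $p$ is odd, $\Gamma$ is pro-$p$, and $K\cap F_\cyc=F$), which kills the relevant $\HH^0$-terms and forces the snake-lemma comparison to be an isomorphism on $\pi$-torsion; when $\Sigma'$ is non-empty, the extra freedom at the imprimitive primes removes the obstruction instead. In either case one concludes that $\pi$-multiplication is surjective on $S_p^{\Sigma'}(E[p^\infty]\otimes\sigma/F_\cyc)$, i.e.\ it is $\pi$-divisible.

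For the ``in particular'' clause: once the dual $N:=S_p^{\Sigma'}(E[p^\infty]\otimes\sigma/F_\cyc)^\vee$ is a finitely generated $\Lambda$-module with $N/\pi N$ of finite type over $O$ — equivalently $N$ has no nonzero finite submodule after quotienting, and $N[\pi]$ corresponds to $\pi$-divisibility — I would invoke the standard structure theory: a finitely generated $\Lambda$-module that is $\pi$-divisible (as a $\Lambda$-module, its Pontryagin dual has no $\pi$-torsion) has no nonzero pseudo-null submodule, since a pseudo-null $\Lambda$-submodule of a module whose dual is $\pi$-torsion-free would itself have $\pi$-torsion-free dual and be pseudo-null, hence zero. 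The main obstacle I anticipate is bookkeeping in the control/comparison diagram — keeping track of which $\HH^0$ and $\HH^1$ terms over $K_\cyc$ (as opposed to over the various completions $F_{\cyc,w}$) genuinely vanish under the two alternative hypotheses, and making sure the local conditions $\mathcal H_v$ in \eqref{exact2}–\eqref{exact3} match up under reduction mod $\pi$ at the primes $v\mid p$ where the Greenberg local condition uses $\widetilde E_v$. This is essentially the content of \cite[Prop.~4.3.3]{MR2807791}, so I would follow Greenberg's argument, adapting the global field from the base to $F_\cyc$ and the module from $E[p^\infty]$ to its twist by $\sigma$.
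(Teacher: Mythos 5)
The paper itself gives no proof for this lemma; it is quoted verbatim from Greenberg \cite[Prop.~4.3.3]{MR2807791} and marked \qed. Your plan---reduce to Greenberg's argument via Shapiro's lemma over $K_\cyc/F_\cyc$, run a snake-lemma comparison between the $E[p]\otimes\widetilde\sigma$ and $E[p^\infty]\otimes\sigma$ levels, and use the dichotomy ``$E[p](K)=0$ or $\Sigma'\neq\emptyset$'' to kill the offending $\HH^0$-terms---is the correct strategy, and since you explicitly defer to Greenberg's proof, you are in effect taking the same route as the paper.

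A few imprecisions in your sketch are worth flagging. First, the natural comparison map induced by $E[p]\otimes\widetilde\sigma=(E[p^\infty]\otimes\sigma)[\pi]\hookrightarrow E[p^\infty]\otimes\sigma$ goes \emph{from} $S_p^{\Sigma'}(E[p]\otimes\widetilde\sigma/F_\cyc)$ \emph{to} $S_p^{\Sigma'}(E[p^\infty]\otimes\sigma/F_\cyc)[\pi]$, not the other way around as you wrote; what actually needs controlling for $\pi$-divisibility is the \emph{cokernel} of multiplication by $\pi$, and your ``detected by small $\pi$-torsion'' phrasing only becomes precise once one separately knows the $O$-corank (e.g.\ via \eqref{897}). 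Second, your ``in particular'' paragraph conflates $S_p^{\Sigma'}(\cdot)$ with its dual $N$ in several places; the clean statement is simply that $M$ $\pi$-divisible $\Leftrightarrow$ $N=M^\vee$ is $\pi$-torsion-free, and since $\Lambda=O[[T]]$ is $2$-dimensional regular local, pseudo-null $=$ finite, and any nonzero finite $\Lambda$-module has nonzero $\pi$-torsion. Finally, it is worth noting that in Greenberg's actual argument the logic runs naturally in the opposite order from the lemma's statement: the hypothesis ``$S_p(E/K_\cyc)[p]$ finite'' gives $\mu=0$ for the dual via Shapiro, one proves the no-pseudo-null-submodule statement first (using surjectivity of the global-to-local map and vanishing of the relevant $\HH^0$), and then $\pi$-divisibility \emph{follows} from $\mu=0$ plus no finite submodule. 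The two statements are equivalent under $\mu=0$, so the lemma's phrasing (divisible, hence no pseudo-null) and the natural proof order (no pseudo-null plus $\mu=0$, hence divisible) are both valid, but being aware of which implication is actually being proven would tighten your sketch.
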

By Lemma \ref{L2},  it follows from \eqref{897} that 
	\begin{small}{\begin{equation}\label{896}
			\lambda_E^{\Sigma'}(\sigma)=\mathrm{rank}_{\mathfrak{f}} \  \ 
			\frac{ S_p^{\Sigma'}(E[p^\infty] \otimes \sigma/F_\cyc)^\vee}{\pi}.
			\end{equation}}\end{small}
	
	Recall, $S_p(E/K_\cyc)[p]$ is finite implies that $S_p(E/K_\cyc)$ is a cotorsion $\Lambda$-module. Then it is well known that the global to local maps in \eqref{exact5}  and  \eqref{389} are surjective and hence we have
	(see for example \cite[(1.3.a)]{MR2807791})
	\begin{small}{\begin{equation}\label{equ23}
			S_p^{\Sigma'}(E/K_\cyc)/ S_p(E/K_\cyc) \cong \prod_{u \mid v, v \in \Sigma'}^{} 
			\mathcal{H}_u(K_\cyc,E).
			\end{equation}}\end{small}

	Let $\de_E^{\Sigma'}(\sigma)$  denote the multiplicity of
	$\sigma$ in the $\mathcal F$ representation space  $\Big(\prod_{v \in \Sigma'}^{} 
	\mathcal{H}_v(K_\cyc,E)\Big)^\vee \otimes_{\Z_p}  \mathcal F$. For a prime $w \mid v$ of $F_\cyc$, we also consider the multiplicity of
	$\sigma$ in the $\mathcal F$ representation space 
	$ \Big(\prod_{\eta \mid w}  \HH^1(K_{\cyc,\eta},E[p^\infty])\Big)^\vee \otimes_{\Z_p}  \mathcal F$. This last multiplicity of $\sigma$ in  $ \Big(\prod_{\eta \mid w}  \HH^1(K_{\cyc,\eta},E[p^\infty])\Big)^\vee \otimes_{\Z_p}  \mathcal F$ is the same for two different $w $ and $w'$ in $F_\cyc$ lying over a prime $v$ in $F$ \cite[(5.0.b)]{MR2807791}; hence it is denoted by 
	$\de_{E,v}(\sigma)$. Let $s_v$ denote the number of primes  of $F_\cyc$ lying above $v$. Then from  \eqref{equ23}, we obtain:
	\begin{small}{\begin{equation} \label{relation}
			\lambda_E^{\Sigma'}(\sigma) =\lambda_E(\sigma)+\de_E^{\Sigma'}(\sigma)
			=\lambda_E(\sigma)+\sum_{v \in \Sigma'}^{} s_v \de_{E,v}(\sigma).
			\end{equation}}\end{small}
			
			\medskip
			
Recall an absolutely irreducible, self-dual representation $\sigma$ of $\De$ over $\mathcal F$ is said to be orthogonal (respectively symplectic) if the non-degenerate $\De$-invariant pairing $B: W_\sigma \times W_\sigma \to \mathcal F$ is symmetric (respectively skew-symmetric), where  $W_\sigma$ denotes the underlying representation space for $\sigma$.   The following relation between $\lambda_E(\sigma)$ and  $s_p(E,\sigma)$ was observed by Greenberg.
	\begin{prop} \label{prop1}
		\cite[Proposition $4.1$]{ICM}
		Let $E$ be an elliptic curve over $K$.
		If $S_p(E/K_\cyc)$ is $\Lambda$-cotorsion, then 
		for an irreducible orthogonal representation $\sigma$ of $\De$, we have
		\begin{small}{\begin{equation} \label{equ22}
				\lambda_E(\sigma) \equiv s_p(E,\sigma) \pmod{2}.  \qed 
				\end{equation}}\end{small}
	\end{prop}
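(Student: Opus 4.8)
The plan is to transfer the comparison to the cyclotomic Iwasawa module of the $\sigma$-twist. Fix a topological generator $\ga$ of $\Ga=\Gal(F_\cyc/F)$, set $T=\ga-1$ and $\La_O:=O[[\Ga]]\cong O[[T]]$, and put $\mathcal X_\sigma:=S_p(E[p^\infty]\otimes\sigma/F_\cyc)^\vee$. Since $E$ is good ordinary at $p$ and $S_p(E/K_\cyc)$ is $\La$-cotorsion, $S_p(E[p^\infty]\otimes\sigma/F_\cyc)$ is $\La$-cotorsion too, so $\mathcal X_\sigma$ is a finitely generated torsion $\La_O$-module with $\mathrm{rank}_O\mathcal X_\sigma=\lambda_E(\sigma)=\dim_{\overline{\Q}_p}\!\big(\mathcal X_\sigma\otimes_O\overline{\Q}_p\big)$ (this is \eqref{897} with $\Sigma'=\emptyset$). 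A control theorem for the $\sigma$-twisted Selmer group identifies $S_p(E[p^\infty]\otimes\sigma/F)^\vee$ with $(\mathcal X_\sigma)_\Ga=\mathcal X_\sigma/T\mathcal X_\sigma$ up to finite groups; since $s_p(E,\sigma)=\mathrm{rank}_O S_p(E[p^\infty]\otimes\sigma/F)^\vee$ by the usual twisting identity, this gives $s_p(E,\sigma)=\mathrm{rank}_O(\mathcal X_\sigma)_\Ga=\dim_{\overline{\Q}_p}\ker\!\big(T\mid \mathcal X_\sigma\otimes_O\overline{\Q}_p\big)$. So it suffices to prove that $\lambda_E(\sigma)-s_p(E,\sigma)=\dim_{\overline{\Q}_p}\!\big(\mathcal X_\sigma\otimes_O\overline{\Q}_p\big)-\dim_{\overline{\Q}_p}\ker\!\big(T\mid \mathcal X_\sigma\otimes_O\overline{\Q}_p\big)$ is even.

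Let $g_\sigma\in O[[T]]$ generate the characteristic ideal of $\mathcal X_\sigma$, so $\lambda_E(\sigma)=\deg g_\sigma$. The first step is the algebraic functional equation: the Weil pairing makes $E[p^\infty]$ Cartier self-dual up to a Tate twist, and $\sigma\cong\sigma^\vee$, so the $\La$-adic generalization of the Cassels--Tate pairing (Flach, Perrin-Riou, Greenberg) gives a $\La_O$-pseudo-isomorphism $\mathcal X_\sigma\sim\mathcal X_\sigma^{\,\iota}$ with $\iota\colon\ga\mapsto\ga^{-1}$; hence $g_\sigma\doteq g_\sigma^{\,\iota}$. Writing the roots of $g_\sigma$ as $\beta-1$, the multiset of the $\beta$'s is then stable under $\beta\mapsto\beta^{-1}$ with multiplicities. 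As $\mathcal X_\sigma$ is $\La_O$-torsion, $g_\sigma$ is a distinguished polynomial up to a unit and a power of $\pi$, so every root $\beta-1$ lies in $\mathfrak m_{\overline{\Q}_p}$, i.e. $\beta\equiv 1$; since $p$ is odd, $v(2)=0$, whence $\beta\neq-1$, and $\beta=1$ is the only self-inverse root. Therefore all $\beta\neq1$ occur in pairs $\{\beta,\beta^{-1}\}$ of equal multiplicity, and $\lambda_E(\sigma)=\deg g_\sigma\equiv\mathrm{ord}_{T=0}g_\sigma\pmod 2$.

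The second step compares $\mathrm{ord}_{T=0}g_\sigma$ with $s_p(E,\sigma)$. Localizing at the height-one prime $(T)$, over the discrete valuation ring $\La_{O,(T)}$ one has $\mathcal X_{\sigma,(T)}\cong\bigoplus_i\La_{O,(T)}/(T^{e_i})$, whence $\mathrm{ord}_{T=0}g_\sigma=\sum_i e_i$ while $s_p(E,\sigma)=\dim_{\overline{\Q}_p}\ker\!\big(T\mid \mathcal X_\sigma\otimes_O\overline{\Q}_p\big)=\#\{i\}$, so $\mathrm{ord}_{T=0}g_\sigma-s_p(E,\sigma)=\sum_i(e_i-1)$. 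Here the orthogonality of $\sigma$ is used: the Weil pairing on $E[p^\infty]$ is \emph{alternating} while the self-duality $\sigma\cong\sigma^\vee$ is \emph{symmetric} (because $\sigma$ is orthogonal), so the perfect $\La$-adic pairing on $\mathcal X_\sigma$ is skew ($\iota$-alternating); since $(T)$ is $\iota$-stable it restricts to a nondegenerate skew linking form on the torsion $\La_{O,(T)}$-module $\mathcal X_{\sigma,(T)}$, whose underlying involution is trivial on the residue field. A standard classification of such forms over a DVR shows the elementary divisors $T^{e_i}$ must occur in equal pairs (the form is an orthogonal sum of hyperbolic blocks $\La_{O,(T)}/(T^f)\oplus\La_{O,(T)}/(T^f)$), so $\#\{i\}$ and $\sum_i e_i$ are both even and $\sum_i(e_i-1)$ is even. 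Combining the two steps, $\lambda_E(\sigma)-s_p(E,\sigma)\equiv\big(\lambda_E(\sigma)-\mathrm{ord}_{T=0}g_\sigma\big)+\big(\mathrm{ord}_{T=0}g_\sigma-s_p(E,\sigma)\big)\equiv 0\pmod 2$.

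The step I expect to be the main obstacle is the duality input feeding both halves: one must produce, via Poitou--Tate and the Weil pairing with the correct local conditions at the primes above $p$ and at $\infty$, a genuinely \emph{perfect} $\La$-adic pairing on the $\sigma$-twisted Iwasawa module, verify that it is skew exactly when $\sigma$ is orthogonal, and then extract the module-theoretic consequence that the elementary divisors at $(T)$ pair up. Once this is in hand the two arithmetic inputs that close the gaps are transparent: $p$ odd eliminates the eigenvalue $\beta=-1$, and the skew (rather than symmetric) symmetry type kills the contribution $\sum_i(e_i-1)$. For symplectic $\sigma$ the induced form is symmetric and this last step can fail, which matches the fact recorded in Remark~\ref{dcjcjkecje} that there the analogue of \eqref{equ22} is known only under an extra hypothesis such as nondegeneracy of the $p$-adic height pairing (which would force every $e_i=1$).
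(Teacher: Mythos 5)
The paper gives no proof of its own here: it states the proposition and cites Greenberg's ICM article, Proposition 4.1 (which rests on his Memoirs monograph). Your reconstruction is essentially Greenberg's argument: the algebraic functional equation $g_\sigma \doteq g_\sigma^{\iota}$ reduces $\lambda_E(\sigma)$ to $\mathrm{ord}_{T=0}\,g_\sigma$ modulo $2$ (using $p$ odd to rule out the self-inverse eigenvalue $\beta=-1$), and the $\iota$-skew structure that orthogonality of $\sigma$ (together with the alternating Weil pairing) induces on $\mathcal X_{\sigma,(T)}$ absorbs the remaining discrepancy with $s_p(E,\sigma)$ — and your observation that the symplectic case breaks exactly here is consistent with Remark~\ref{dcjcjkecje}.
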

	
	\medskip
	
	Note that $s_v$ in \eqref{relation} is odd, as $p$ is odd. Then using \eqref{equ22} and \eqref{relation}, we deduce the following corollary:
	\begin{cor}\label{colmarch}
		Let $E$ be good, ordinary at primes of $F$ dividing $p$. Assume that either $E[p]$ is an irreducible  $G_K$ module or  that $\Sigma'$ is non-empty. We further assume that $S_p(E/K_\cyc)[p]$ {{}is} finite.  Then for an irreducible orthogonal representation $\sigma$ of $\De$, we get 
		\begin{small}{\begin{equation} \label{relationmarch}
				s_p(E,\sigma)   \equiv \lambda_E^{\Sigma'}(\sigma)
				+\sum_{v \in \Sigma'}^{}  \de_{E,v} (\sigma)    \pmod  2. \qed 
				\end{equation}}\end{small}
	\end{cor}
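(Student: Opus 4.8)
The plan is to obtain this as a formal consequence of the decomposition \eqref{relation} together with Greenberg's congruence \eqref{equ22} of Proposition \ref{prop1}, after checking that the hypotheses in the statement are strong enough to legitimately invoke those earlier results. First I would observe that if $E[p]$ is irreducible as a $G_K$-module then $E[p](K)=0$: a nonzero $G_K$-fixed subspace would be a nonzero proper submodule, since $E[p]$ is two-dimensional over $\F_p$ and hence cannot be entirely fixed without every line being a submodule. Thus the hypothesis ``$E[p]$ irreducible over $G_K$ or $\Sigma'$ non-empty'' implies the hypothesis ``$E[p](K)=0$ or $\Sigma'$ non-empty'' of Lemma \ref{L2}, so that \eqref{897}, \eqref{896} and the resulting relation \eqref{relation} are all available. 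Moreover, as recalled in \S\ref{sel1}, finiteness of $S_p(E/K_\cyc)[p]$ forces $S_p(E/K_\cyc)$ to be $\La$-cotorsion, which is precisely the running assumption of Proposition \ref{prop1}.

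Next I would reduce \eqref{relation} modulo $2$. Since $\Gamma=\Gal(F_\cyc/F)\cong\Z_p$ and $p$ is odd, the number $s_v$ of primes of $F_\cyc$ above a given $v$ equals the index in $\Gamma$ of the decomposition subgroup of $v$, hence is a power of $p$ and in particular odd; therefore $s_v\,\de_{E,v}(\sigma)\equiv \de_{E,v}(\sigma)\pmod 2$ for each $v\in\Sigma'$. Substituting into \eqref{relation} yields
\[
\lambda_E^{\Sigma'}(\sigma)\equiv \lambda_E(\sigma)+\sum_{v\in\Sigma'}\de_{E,v}(\sigma)\pmod 2 .
\]

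Finally I would apply Proposition \ref{prop1}, which is applicable since $\sigma$ is orthogonal and $S_p(E/K_\cyc)$ is $\La$-cotorsion, to replace $\lambda_E(\sigma)$ by $s_p(E,\sigma)$ modulo $2$; rearranging and using $-\sum_{v\in\Sigma'}\de_{E,v}(\sigma)\equiv\sum_{v\in\Sigma'}\de_{E,v}(\sigma)\pmod 2$ gives
\[
s_p(E,\sigma)\equiv \lambda_E^{\Sigma'}(\sigma)+\sum_{v\in\Sigma'}\de_{E,v}(\sigma)\pmod 2,
\]
which is the assertion. I do not expect a genuine obstacle here, as the corollary is essentially a bookkeeping consequence of \eqref{relation} and \eqref{equ22}; the only points deserving a moment's care are the implication ``$E[p]$ irreducible over $G_K$'' $\Rightarrow E[p](K)=0$ needed to feed Lemma \ref{L2}, and the elementary fact that each $s_v$ is odd.
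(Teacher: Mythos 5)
Your proof is correct and takes essentially the same approach as the paper, which simply observes that each $s_v$ is odd (being a power of $p$) and then combines \eqref{equ22} from Proposition \ref{prop1} with the decomposition \eqref{relation}. The only extra content in your write-up is the explicit verification that ``$E[p]$ irreducible as a $G_K$-module'' forces $E[p](K)=0$ (so that the hypothesis of Lemma \ref{L2} is met), a point the paper leaves implicit; your argument for it is sound.
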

	\begin{definition}\label{inner-product}
		Given a finite group $G$, we consider two finite dimensional representations $\alpha$ and $\beta$ over a field $L$ with $\alpha$ absolutely irreducible.  Let $V_\alpha$ and 
		$V_\beta$ be the corresponding representation spaces. Then define 
		$\langle \beta,\alpha \rangle = \langle \beta,\alpha \rangle_G :=
		\dim_{L}\  \mathrm{Hom}(V_\alpha, V_\beta)$.
		In other words, $\langle \beta,\alpha \rangle$ is the largest non-negative integer $m$ such that $V_\beta$
		contains a $G$-invariant subspace isomorphic to $V_\alpha^m$. Note that if $G$ is a quotient of $\mathcal{G}$, then  $\langle \beta,\alpha \rangle_G = \langle \beta,\alpha \rangle_\mathcal{G}$.
	\end{definition}
	\noindent \textbf{Determination of} $\de_{E,v}(\sigma)$: As we have noted earlier, given a prime $v$ in $F$, $\delta_{E_i,v}(\sigma)$ does not depend on the choice of $w$ in $F_\cyc$ or $\eta$  in $K_\cyc$. Hence by abuse of notation, when there is no cause of confusion, we will simply denote $F_{\cyc,w}$, $K_{\cyc,\eta}$ and $\Delta_\eta =\mathrm{Gal}(K_{\cyc,\eta}/F_{\cyc,w})$ respectively by $F_{\cyc,v}$, $K_{\cyc,v}$ and $\Delta_v$.  Recall $\sigma_v:= \sigma \mid_{\De_v}$ is the restriction of $\sigma$ to the decomposition subgroup of $\De$ at $v$.
	Let $\mathrm{Irr}_\mathcal{F}(\De_v)$ be the set of all $\mathcal{F}$ irreducible representations of  $\De_v$.
	By going to a finite extension of $\mathcal F$ if necessary, we may assume each $\chi \in \mathrm{Irr}_\mathcal{F}(\De_v)$ is absolutely irreducible. We view $\rho_{E,v}:=\rho_E \mid_{G_{F_{\cyc,v}}}$ as the representation  $\rho_{E,v}: G_{F_{\cyc,v}} \to V_p(E) \otimes_{\Q_p} \mathcal{F}$. Then we have the following  expression for $\de_{E,v}(\sigma)$ (\cite[(5.1.a), \S 5.2]{MR2807791}).
	\begin{small}{\begin{equation}\label{local}
			\de_{E,v}(\sigma)=\sum_{\chi \in \mathrm{Irr}_\mathcal{F}(\De_v)}
			\langle \sigma_v, \chi \rangle \langle \rho_{E,v}, \chi \rangle.
			\end{equation}}\end{small}
	\begin{cor}\label{colfeb27}
		Let $E$ be as in the setting of Corollary \ref{colmarch}. Then for an irreducible orthogonal representation $\sigma$ of $\De$, we get 
		\begin{small}{\begin{equation} \label{relatio26feb}
				s_p(E,\sigma)   \equiv \lambda_E^{\Sigma'}(\sigma)
				+\sum_{v \in \Sigma'}^{} \sum_{\chi \in \mathrm{Irr}_\mathcal{F}(\De_v)}
				\langle \sigma_v, \chi \rangle \langle \rho_{E,v}, \chi \rangle   \pmod  2. \qed 
				\end{equation}}\end{small}
	\end{cor}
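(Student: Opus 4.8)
The plan is to obtain this as an immediate consequence of Corollary \ref{colmarch} combined with Greenberg's local formula \eqref{local}. Since $E$ is in the setting of Corollary \ref{colmarch} --- it is good and ordinary at the primes of $F$ above $p$, either $E[p]$ is $G_K$-irreducible or $\Sigma'$ is nonempty, and $S_p(E/K_\cyc)[p]$ is finite --- we may apply \eqref{relationmarch}, which gives
\[
s_p(E,\sigma) \equiv \lambda_E^{\Sigma'}(\sigma) + \sum_{v \in \Sigma'} \de_{E,v}(\sigma) \pmod 2 .
\]
Recall that this congruence itself rests on Proposition \ref{prop1} (the parity identity $\lambda_E(\sigma) \equiv s_p(E,\sigma) \pmod 2$ for orthogonal $\sigma$, using that $S_p(E/K_\cyc)$ is $\Lambda$-cotorsion), on Lemma \ref{L2} and the resulting formula \eqref{896} identifying $\lambda_E^{\Sigma'}(\sigma)$ with a genuine $O$-rank, and on the relation \eqref{relation}, in which each local contribution appears with multiplicity $s_v$, an odd number because $p$ is odd; this is what lets one drop the factors $s_v$ modulo $2$.

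It then remains only to substitute, for each $v \in \Sigma'$, the expression $\de_{E,v}(\sigma) = \sum_{\chi \in \mathrm{Irr}_{\mathcal{F}}(\De_v)} \langle \sigma_v, \chi \rangle \langle \rho_{E,v}, \chi \rangle$ from \eqref{local}. The mild points to verify are that $\de_{E,v}(\sigma)$ does not depend on the choice of a prime $w \mid v$ of $F_\cyc$ (recorded above, following \cite[(5.0.b)]{MR2807791}), that $\sigma_v$ denotes the restriction of $\sigma$ to the decomposition group $\De_v$, and that we have enlarged $\mathcal{F}$ so that every $\chi \in \mathrm{Irr}_{\mathcal{F}}(\De_v)$ is absolutely irreducible, so that $\langle \,\cdot\,, \chi \rangle$ is literally $\dim_{\mathcal{F}} \Hom(V_\chi, \,\cdot\,)$ in the sense of Definition \ref{inner-product}. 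With these conventions the substitution is formal and yields \eqref{relatio26feb}.

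There is no real obstacle: the statement is a repackaging of \eqref{relationmarch} via \eqref{local}, and its purpose is structural --- it isolates the dependence of $s_p(E,\sigma) \bmod 2$ on the local data $\langle \rho_{E,v}, \chi \rangle$ at the primes $v \in \Sigma'$, which is precisely the shape needed later to compare two congruent elliptic curves $E_1, E_2$ prime by prime (the terms $\lambda_{E_i}^{\Sigma'}(\sigma)$ being arranged to agree by the choice $\Sigma' = \Sigma_0$). The only thing worth a second look is making sure the hypotheses propagate correctly from Corollary \ref{colmarch}: in particular that ``$E[p]$ irreducible as a $G_K$-module'' is the avatar of ``$E[p](K)=0$'' that is actually invoked through Lemma \ref{L2}, and that nothing stronger is required.
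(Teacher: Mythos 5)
Your proof is correct and is exactly the argument the paper intends: the corollary is obtained by applying Corollary \ref{colmarch} and then substituting the local formula \eqref{local} for each $\de_{E,v}(\sigma)$. Your bookkeeping of where the hypotheses are used (Proposition \ref{prop1}, Lemma \ref{L2} via \eqref{896}, and the oddness of $s_v$ in \eqref{relation}) and of the conventions on $\sigma_v$, $\De_v$, and the enlargement of $\mathcal{F}$ is also accurate, so there is nothing to add.
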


	\begin{remark}\label{reductiontolocalstpes}
		Assume that we have two elliptic curves $E_1$ and $E_2$ which are congruent mod $p$ and they fit into the setting of Corollary \ref{colfeb27}. Then to compare the parity of $s_p(E_1,\sigma)$  with $s_p(E_2,\sigma)$, by the same corollary, it reduces to  compare (i) the Iwasawa invariants $\lambda_{E_1}^{\Sigma'}(\sigma)$ with $\lambda_{E_2}^{\Sigma'}(\sigma)$ for a suitably chosen $\Sigma'$ and then (ii) compare the  parity of  $ \langle \sigma_v, \chi \rangle\langle \rho_{E_1,v}, \chi \rangle$ with $\langle \sigma_v, \chi \rangle \langle \rho_{E_2,v}, \chi \rangle$ for each $v  $ in that particularly chosen set $\Sigma'$, with $\chi$ as above.  We will do this in the next section.
	\end{remark}
	\par  
	We now study  $\langle \rho_{E,v}, \chi \rangle$, according to the reduction type of $E$ at $v$. 
	Set $\omega_v:=\omega_p|_{G_{F_{\cyc,v}}}$, where $\omega_p:G_\Q \to \Z_p^\times$ is the $p$-adic cyclotomic character. Then $\omega_v$ is a power of the Teichm\"uller character and the order of $\omega_v$ {{}divides} $p-1.$ 
	
	\medskip
	
	\par If $E$ has good reduction at $v$, 
	then the action of $G_{F_{\cyc,v}}$ on $V_p(E)$
	is unramified and after extending $\mathcal F$ if necessary, the representation space $V_p(E) \otimes_{\Q_p}\mathcal{F}$
	is the direct sum of two one-dimensional subspaces on which $G_{F_{\cyc,v}}$ acts by unramified
	characters $\varphi_{E, v},\psi_{E,v}$ (say) of orders dividing $p^2-1$. By Weil pairing, $\varphi_{E,v} \psi_{E,v}=\omega_v$. 
	\begin{small}{\begin{equation}\label{3891}
			\langle \rho_{E,v}, \chi \rangle =
			\begin{cases} 
			2 & \text{if }  \chi =\varphi_{E,v}=\psi_{E,v} \\
			1 & \text{if } \chi \in \{\varphi_{E,v},  \psi_{E,v}\} \text { and } \varphi_{E,v} \neq \psi_{E,v} \\
			0 &  \text{otherwise } 
			\end{cases}
			\end{equation}}\end{small}
	\par Let $E$ be multiplicative  at $v$ and $\varkappa_v$ is the unique unramified quadratic 
	character of $G_{F_{\cyc,v}}$. By \cite[Prop. 2.12]{ddt}
	\begin{small}{\begin{equation} \label{varkappa}  
			\rho_{E,v} \sim 
			\begin{pmatrix}
			\omega_v & * \\
			0 & 1
			\end{pmatrix}, \text{ if } E \text{ splits at } v \ \text{ and }
			\rho_{E,v} \sim 
			\begin{pmatrix}
			\omega_v & * \\
			0 & 1
			\end{pmatrix}
			\otimes \varkappa_v, \text{ if } E \text{ is non-split at } v.
			\end{equation}}\end{small}
	\begin{small}{\begin{equation}\label{3892}
			\text{If } E \text{ is split at } v, \   \langle \rho_{E,v}, \chi \rangle =
			\begin{cases} 
			1 & \text{if } \chi= \omega_v \\
			0 &  \text{otherwise.} 
			\end{cases} \quad \ \
			\text{If } E \text{ is non-split at } v, \     \quad ~\langle \rho_{E,v}, \chi \rangle =
			\begin{cases} 
			1 & \text{if } \chi= \omega_v\varkappa_v \\
			0 &  \text{otherwise.} 
			\end{cases}
			\end{equation}}\end{small}
	\par If $E$ has additive but potentially multiplicative reduction at $v$, then $E$ is split multiplicative over a ramified quadratic extension of $F_{\cyc,v}$. Also $V_pE$ has a unique one dimensional $\rho_{E,v}$ invariant subspace on which  $\rho_{E,v}$  acts via $\omega_v\vartheta_{E,v} $ (say), where $\vartheta_{E,v} $ is a ramified quadratic character of $G_{F_{\cyc,v}}$ associated with the extension $F_v(\sqrt{-c_6})/F_v$, where $c_6$ is coming from the Weierstrass equation of $E$ \cite[Page 442-444]{sil}. Then\begin{small}{\begin{equation} \label{pmr}
			\rho_{E,v} \sim 
			\begin{pmatrix}
			\omega_v & * \\
			0 & 1
			\end{pmatrix} \otimes \vartheta_{E,v}.
			\end{equation}}\end{small}
	\begin{small}{\begin{equation}\label{3893}
			\text{In this case, \quad \ \ ~ \quad \ \ ~}\langle \rho_{E,v}, \chi \rangle =
			\begin{cases} 
			1 & \text{if } \chi= \omega_v\vartheta_{E,v} \\
			0 &  \text{otherwise } 
			\end{cases}
			\end{equation}}\end{small}
\par If $E$ has additive but potentially good reduction at $v$, then  $\rho_{E}(G_{F_{\cyc,v}})$ is finite {{} as we are over cyclotomic extension  \cite[Page 73]{MR2807791}}. Moreover, for $p \geq 5$, $p \nmid \#\rho_{E}(G_{F_{\cyc,v}})$. If  $\rho_{E}(G_{F_{\cyc,v}})$ is abelian, then the $G_{F_{\cyc,v}}$ representation $V_p(E)\otimes {\mathcal{F}}$ is a direct sum of two 
	characters, say $\varepsilon_{E,v},\varepsilon_{E,v}^{-1}\omega_v$. If $\rho_{E}(G_{F_{\cyc,v}})$ is non-abelian, then $\rho_{E,v}$ is absolutely irreducible. Thus
	\begin{small}{	 
			\begin{equation}\label{3894}
			\langle \rho_{E,v}, \chi \rangle =
			\begin{cases} 
			2 & \text{if } \chi =\varepsilon_{E,v}=\varepsilon_{E,v}^{-1}\omega_v \text{ and } \rho_{E}(G_{F_{\cyc,v}}) \text{ is abelian}\\
			1 & \text{if } \chi \in \{\varepsilon_{E,v}, \varepsilon_{E,v}^{-1}\omega_v\}, \ \varepsilon_{E,v} \neq \varepsilon_{E,v}^{-1}\omega_v \text{ and } \rho_{E}(G_{F_{\cyc,v}}) \text{ is abelian}\\
			1 & \text{if } \chi = \rho_{E,v}  \text{ and } \rho_{E}(G_{F_{\cyc,v}}) \text{ is non-abelian}\\
			0 &  \text{otherwise.} 
			\end{cases}
			\end{equation}} \end{small}
			
			\medskip
			
We end this section with a technical lemma to be used later.  Set $S^*(E[p] \otimes \widetilde{\sigma}/F_\cyc):=\underset{n}{\varprojlim} \ S_p^{\Sigma'}
	(E[p] \otimes \widetilde{\sigma}/F_n) $ where $F_n$ varies over the cyclotomic tower $F_\cyc/F$. 
	The following lemma follows from the proof of  \cite[Lemma $5.5$]{MR3766902}. As mentioned by \cite{MR3766902}, their proof follows essentially the idea of \cite[Prop 7.1]{hv}.
	
	\medskip
	
	\begin{lemma} \label{L1}
		There is an injective map 
		\begin{small}{$S^*(E[p] \otimes \widetilde{\sigma}/F_\cyc) \hookrightarrow{}
				\Hom_{\mathbb{F}_p[[\Ga]]}\big( S_p^{\Sigma'}(E[p] \otimes \widetilde{\sigma}/F_\cyc)^\vee,
				\mathbb{F}_p[[\Ga]]  \big). \qed $}\end{small}
	\end{lemma}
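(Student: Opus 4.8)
The plan is to follow the strategy of \cite[Lemma 5.5]{MR3766902} (itself modelled on \cite[Prop.~7.1]{hv}): realize the target $\Hom$-module as an inverse limit over the cyclotomic layers, produce compatible ``control'' maps out of the finite-level Selmer groups, and pass to the limit. Throughout, write $M:=E[p]\otimes\widetilde{\sigma}$, a finite $G_F$-module killed by $p$ and unramified outside $\Sigma$, set $\Lambda_0:=\mathbb{F}_p[[\Ga]]$, $\mathcal{S}:=S_p^{\Sigma'}(E[p]\otimes\widetilde{\sigma}/F_\cyc)$ and $X:=\mathcal{S}^\vee$ (a finitely generated $\Lambda_0$-module, since $M$ is finite); for $n\ge 0$ let $F_n\subset F_\cyc$ be the $n$-th layer, $\Ga_n:=\Gal(F_\cyc/F_n)$, and $\omega_n\in\Lambda_0$ the usual element with $\Lambda_0/\omega_n\cong\mathbb{F}_p[\Ga/\Ga_n]$ and $\mathcal{S}^{\Ga_n}=\mathcal{S}[\omega_n]$.

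The first, purely algebraic, step is to identify $\Hom_{\Lambda_0}(X,\Lambda_0)$ with $\varprojlim_n\mathcal{S}^{\Ga_n}$. Since $\Hom$ commutes with the inverse limit $\Lambda_0=\varprojlim_n\Lambda_0/\omega_n$, and $\Hom_{\Lambda_0}(X,\Lambda_0/\omega_n)=\Hom_{\Lambda_0/\omega_n}(X/\omega_nX,\Lambda_0/\omega_n)$, one gets $\Hom_{\Lambda_0}(X,\Lambda_0)=\varprojlim_n\Hom_{\mathbb{F}_p[\Ga/\Ga_n]}(X/\omega_nX,\mathbb{F}_p[\Ga/\Ga_n])$. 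As the group ring of a finite group, $\mathbb{F}_p[\Ga/\Ga_n]$ is a symmetric (Frobenius) algebra, hence self-dual as a module over itself, so the $n$-th term is $(X/\omega_nX)^\vee=\mathcal{S}[\omega_n]=\mathcal{S}^{\Ga_n}$ (up to the harmless involution of $\Lambda_0$); tracking transition maps, the projections $\Lambda_0/\omega_{n+1}\twoheadrightarrow\Lambda_0/\omega_n$ become the trace maps $\mathcal{S}^{\Ga_{n+1}}\to\mathcal{S}^{\Ga_n}$, $s\mapsto\sum_{\gamma\in\Ga_n/\Ga_{n+1}}\gamma s$.

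Next I would bring in Galois cohomology. For each $n$, restriction along $F_\cyc/F_n$ gives a map $H^1(F_\Sigma/F_n,M)\to H^1(F_\Sigma/F_\cyc,M)^{\Ga_n}$ whose kernel is the inflation image of $H^1(\Ga_n,M^{G_{F_\cyc}})$, and it carries the local conditions at $v\in\Sigma\setminus\Sigma'$ defining $S_p^{\Sigma'}(M/F_n)$ (vanishing in $\mathcal H_v(F_\cyc,-)$, i.e. the strict condition at $v\nmid p$ and the ordinary/Greenberg condition at $v\mid p$) into those defining $\mathcal{S}$, simply because restriction of cohomology classes to completions is functorial; this yields $r_n\colon S_p^{\Sigma'}(M/F_n)\to\mathcal{S}^{\Ga_n}$. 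By the Mackey relation $\mathrm{res}^{F_\cyc}_{F_n}\circ\mathrm{cor}^{F_{n+1}}_{F_n}=N_{\Ga_n/\Ga_{n+1}}\circ\mathrm{res}^{F_\cyc}_{F_{n+1}}$, the $r_n$ are compatible with corestriction on the source --- which are exactly the transition maps in the definition of $S^*$ --- and with the trace maps on the target, so passing to the limit gives a $\Lambda_0$-homomorphism $S^*(E[p]\otimes\widetilde{\sigma}/F_\cyc)=\varprojlim_n S_p^{\Sigma'}(M/F_n)\to\varprojlim_n\mathcal{S}^{\Ga_n}\cong\Hom_{\Lambda_0}(X,\Lambda_0)$. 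For injectivity, by left exactness of $\varprojlim$ the kernel is $\varprojlim_n\ker(r_n)$, contained in $\varprojlim_n H^1(\Ga_n,M^{G_{F_\cyc}})$; under the running hypotheses one has $M^{G_{F_\cyc}}=(E[p]\otimes\widetilde{\sigma})^{G_{F_\cyc}}=0$ --- using $K\cap F_\cyc=F$ together with $E_1[p](K)=0$ and the ramification assumption at $p$, which forces $E_1[p](KF_\cyc)=0$ --- so in fact each $r_n$ is already injective.

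The imprimitive set $\Sigma'$ and the twist by $\widetilde{\sigma}$ relative to \cite{MR3766902} affect none of the compatibilities above, so I expect essentially no new work there. I expect the main obstacle to be the injectivity step: pinning down precisely the kernel of $r_n$ and showing it (or its inverse limit under corestriction) vanishes, i.e.\ verifying that the relevant fixed module $M^{G_{F_\cyc}}$ vanishes (or that the finite kernels are eventually annihilated by the corestriction maps) under the section's hypotheses. This is exactly the point handled in \cite[Lemma 5.5]{MR3766902} and \cite[Prop.~7.1]{hv}, and it is where I would lean on those references together with \cite{MR2807791} rather than redo the computation.
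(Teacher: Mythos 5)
Your reconstruction is correct and follows exactly the route the paper intends: the paper gives no proof of its own but cites the argument of \cite[Lemma 5.5]{MR3766902}, modelled on \cite[Prop.~7.1]{hv}, and your three-step outline (identify $\Hom_{\Lambda_0}(X,\Lambda_0)$ with $\varprojlim_n \mathcal{S}^{\Gamma_n}$ via self-duality of the finite group rings $\mathbb{F}_p[\Gamma/\Gamma_n]$; build the compatible restriction maps $r_n$ out of the finite-level imprimitive Selmer groups; pass to the limit and control the kernel via inflation-restriction) is precisely that argument transported to the present twisted, imprimitive setting. You also correctly locate the one input from the running hypotheses needed for injectivity, namely $(E[p]\otimes\widetilde\sigma)^{G_{F_\cyc}}=0$, which follows from $E[p](K)=0$ (H3), Nakayama applied along the pro-$p$ extension $K_\cyc/K$, and $K\cap F_\cyc=F$; the ``ramification at $p$'' you invoke is not actually needed for that particular vanishing, but this is harmless over-caution rather than a gap.
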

	
	\medskip

	\section{Comparison of the parity of  $s_p(E_i, \sigma)$} \label{s2}
	Now let $E_1,E_2$ be two elliptic curves over $k$ such that $E_1[p] \cong E_2[p]$ as $G_k$ modules. Recall from introduction, $N^F_i=N^F_{E_i}$ denote the conductor of $E_i$ over $F$ and $\overline{N}^F_i=\overline{N}^F_{E_i}$ denote the prime-to-$p$ part of the conductor of the Galois module $E_i[p]$ over $F$ for $i=1,2$. Let $\Sigma= \Sigma(E_1,E_2, K,F)$ be a finite set of primes of $F$ containing the primes above $p$,  the infinite primes, the primes of bad reduction of both $E_1$ and $E_2$ and primes which ramify in $K/F$. For a prime $v$ of $F$, let $e_v(K/F)$ denote the ramification index of $v$ in $K/F$. We now choose and fix a  subset $\Sigma_0=\Sigma_0(E_1,E_2, K,F)$ of $\Sigma$ defined as: \begin{equation} \label{Sigma_0}
	\Sigma_0= \Sigma_0(E_1,E_2,K,F):=\{v \in \Sigma: v \mid N^F_1/\overline{N}^F_1 \,\, \text{or} \,\, v \mid N^F_2/\overline{N}^F_2\}
	\cup \{v :   v \nmid p \, \infty,  \,\, \text{and} \,\, p \mid e_v(K/F)\}.
	\end{equation}
	With this definition, we can apply the results of \S \ref{sel1} on imprimitive Selmer groups with $\Sigma'=\Sigma_0$. Throughout the article, we make the following hypothesis {\bf (H)}.
	
	\par Hypothesis {\bf (H):} All of (H1) to (H4) hold  simultaneously.
	\begin{enumerate} 
		\item[ (H1)]  
		$E_1$ and  $E_2$ {{}have} good reduction at primes of $k$ lying above $p$.
		\item [(H2)] $E_1$ has ordinary reduction at primes of $k$ lying above $p$.
		\item[ (H3)] $E_1[p](K)=0.$ 
		\item[ (H4)] $S_p(E_1/K_\cyc)[p]$ is finite.
	\end{enumerate}
	\begin{remark}
		It is known (see \cite[Remark $1.2$]{MR3503694}), the assumptions (H2), (H3) and (H4) hold for $E_1$ if and only if the same hold for $E_2$. Also (H4) actually implies (H2), by a result of P. Schneider.
	\end{remark}
	
	As explained in Remark \ref{reductiontolocalstpes},  to study $s_p(E_1, \sigma)- s_p(E_2, \sigma) \pmod 2$, we will first compare $\lambda^{\Sigma_0}_{E_1}(\sigma)$ with $\lambda^{\Sigma_0}_{E_2}(\sigma)$ and  then further calculate  $\delta_{E_1, v}(\sigma)- \delta_{E_2, v}(\sigma) \pmod2,$ for each $v \in \Sigma_0$. 
	\subsection{Comparison of the parity of the multiplicities $\lambda^{\Sigma_0}_{E_i}(\sigma)$}\label{s2.1}
	The multiplication by $p$ map on $E_i$, via Kummer theory, induces  a natural map $ S_p^{\Sigma_0}(E_i[p] \otimes \widetilde{\sigma}/F_\cyc) \lra S_p^{\Sigma_0}(E_i[p^\infty] \otimes \sigma/F_\cyc) [\pi]$. We compare the parity of $\lambda^{\Sigma_0}_{E_1}(\sigma)$ with $\lambda^{\Sigma_0}_{E_2}(\sigma)$ by studying the kernel and the cokernel of this map keeping in mind that $E_1[p] \cong E_2[p]$ as Galois modules. We begin with the following Lemma:
	\begin{lemma}\label{lem00}
		Let us fix a compatible choice of primes $v,u,w,\eta$ respectively in $F,K,F_\cyc, K_\cyc$ such that %$\eta\mid w\mid u \mid v$. 
		{{} $\eta\mid u \mid v$ and $\eta \mid w \mid v$}. Let $E \in \{E_1, E_2\}$. Assume $ v \in \Sigma \setminus \Sigma_0$ such that $v \nmid p \infty$. Then the natural map $\gamma_\eta$, induced from the multiplication by $p$ map on $E$,
	\begin{small}{\begin{equation} \label{equ21}
				\gamma_\eta:\HH^1(K_{\cyc,\eta}, E[p]) \to \HH^1(K_{\cyc,\eta}, E[p^\infty])[\pi],		\end{equation}}\end{small}
is injective. \qed 
	\end{lemma}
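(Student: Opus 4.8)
The plan is to obtain the injectivity of $\gamma_\eta$ from the long exact cohomology sequence attached to the short exact sequence of $G_{K_{\cyc,\eta}}$-modules
\begin{small}{\begin{equation*}
0 \longrightarrow E[p] \longrightarrow E[p^\infty] \overset{p}{\longrightarrow} E[p^\infty] \longrightarrow 0,
\end{equation*}}\end{small}
which gives $\HH^0(K_{\cyc,\eta}, E[p^\infty]) \overset{p}{\to} \HH^0(K_{\cyc,\eta}, E[p^\infty]) \to \HH^1(K_{\cyc,\eta}, E[p]) \overset{\gamma_\eta}{\to} \HH^1(K_{\cyc,\eta}, E[p^\infty])[\pi] \to 0$. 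Thus $\ker(\gamma_\eta)$ is the cokernel of multiplication by $p$ on $E[p^\infty](K_{\cyc,\eta}) = E(K_{\cyc,\eta})[p^\infty]$, which is a finite group (since $E(K_{\cyc,\eta})[p^\infty]$ is cofinitely generated and the residue field is finite, a standard fact over local fields and their $\Z_p$-extensions). Hence it suffices to show that $E(K_{\cyc,\eta})[p^\infty]$ is $p$-divisible, equivalently that $E(K_{\cyc,\eta})[p^\infty]$ is either trivial or isomorphic to $\Q_p/\Z_p$ — in either case multiplication by $p$ is surjective on it, so $\ker(\gamma_\eta)=0$.

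To establish this $p$-divisibility, I would argue that $E[p](K_{\cyc,\eta}) = 0$ for the primes $v$ under consideration, i.e. $v \in \Sigma \setminus \Sigma_0$ with $v \nmid p\infty$; then a fortiori $E(K_{\cyc,\eta})[p] = 0$, so $E(K_{\cyc,\eta})[p^\infty] = 0$ and there is nothing left to prove. The point is that since $v \notin \Sigma_0$, the prime $v$ does not divide $e_v(K/F)$ to the first power in the $p$-part, so $K_{\cyc,\eta}/F_v$ is tamely ramified (indeed $p \nmid e_v(K/F)$ and $F_{\cyc,v}/F_v$ is unramified at $v \nmid p$, so the whole extension $K_{\cyc,\eta}/F_v$ has degree prime to $p$ times an unramified piece), and moreover $v \nmid N^F_E/\overline N^F_E$ forces $E$ to have either good or multiplicative reduction, or additive reduction becoming good/multiplicative after an unramified or tame twist, at $v$. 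For such $v$ one checks directly, using the explicit description of $\rho_{E,v}$ recalled in \S\ref{sel1} (equations \eqref{3891}, \eqref{3892}, \eqref{3893}, \eqref{3894}) together with the fact that $F_{\cyc,v}$ contains no $p$-th roots of unity and $K_{\cyc,\eta}/F_{\cyc,v}$ has degree prime to $p$ (by the defining condition of $\Sigma_0$), that $E[p]$ has no nonzero $G_{K_{\cyc,\eta}}$-fixed points: a fixed $p$-torsion point would force one of the characters $\varphi_{E,v}, \psi_{E,v}, \omega_v, \omega_v\varkappa_v, \omega_v\vartheta_{E,v}$ (according to reduction type) to become trivial on $G_{K_{\cyc,\eta}}$, which is incompatible with the degree constraint or with $\omega_v$ having order dividing $p-1$ and being nontrivial, or with irreducibility in the non-abelian additive case.

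The main obstacle I anticipate is the bookkeeping in the additive-reduction cases: one must carefully track how the tame/unramified restriction on $K_{\cyc,\eta}/F_v$ interacts with the characters $\vartheta_{E,v}$, $\varepsilon_{E,v}$ and with the finite (but possibly non-abelian) image $\rho_E(G_{F_{\cyc,v}})$, and verify that no $p$-torsion point is fixed in each sub-case — keeping in mind the prime $p=3$, where the non-abelian image can be a $3$-group's complement and needs slightly more care. A clean way to package this is to observe that $v \notin \Sigma_0$ precisely guarantees $p \nmid [K_{\cyc,\eta} : F_{\cyc,v}]$ and that the prime-to-$p$ part of the conductor of $E[p]$ being supported away from the "extra" primes means $E[p]\big|_{G_{F_{\cyc,v}}}$ is unramified or at worst tamely ramified with the ramification killed by an extension of degree prime to $p$; combining these, $E[p]^{G_{K_{\cyc,\eta}}} \subseteq E[p]^{I_{K_{\cyc,\eta}}}$ is a quotient of $E[p]^{I_{F_{\cyc,v}}}$ on which Frobenius acts without eigenvalue $1$ (again by the reduction-type analysis), giving the vanishing. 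Once $E(K_{\cyc,\eta})[p^\infty]=0$ is in hand, injectivity of $\gamma_\eta$ is immediate from the long exact sequence, completing the proof.
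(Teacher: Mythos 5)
Your reduction of the injectivity of $\gamma_\eta$ to the $p$-divisibility of $E[p^\infty](K_{\cyc,\eta})$ via the long exact sequence is sound and is also the mechanism the paper uses in one of its cases. However, your route to that divisibility has a genuine gap: you propose to prove it by showing $E[p](K_{\cyc,\eta})=0$, and this is simply false in general. For instance, if $E$ has split multiplicative reduction at $v$ and $\mu_p\subset K_{\cyc,\eta}$ (which is perfectly allowed for $v\in\Sigma\setminus\Sigma_0$ — there is no hypothesis that $\omega_v$ is nontrivial on $G_{K_{\cyc,\eta}}$), Tate's parametrization immediately gives $E[p](K_{\cyc,\eta})\supseteq\mu_p\neq 0$. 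What one actually wants is the weaker statement that $E[p^\infty](K_{\cyc,\eta})$ is $p$-divisible; this can hold while the $p$-torsion is nonzero (it is then $\Q_p/\Z_p$, not $0$). Several of the auxiliary assertions in your sketch inherit this problem, in particular the claim that "one of the characters $\ldots$ would become trivial $\ldots$ which is incompatible with $\omega_v$ being nontrivial": there is no such incompatibility.

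The paper's proof avoids this by splitting according to the reduction type of $E$ over $K$ at $u$, not over $F$ at $v$. When $E/K$ is additive at $u$, it invokes \cite[Lemma 2.2]{MR3629245}, which shows that both $\HH^1(K_{\cyc,\eta},E[p])$ and $\HH^1(K_{\cyc,\eta},E[p^\infty])$ already vanish, so the injectivity is vacuous. When $E/K$ is good or multiplicative at $u$, the core of the argument is the arithmetic claim that the condition $v\notin\Sigma_0$ (specifically $v\nmid N_E^F/\overline N_E^F$ together with $p\nmid e_v(K/F)$) forces $u\nmid N_E^K/\overline N_E^K$; given that, \cite[Lemma 4.1.2]{EPW} yields precisely the $p$-divisibility of $E[p^\infty](K_{\cyc,\eta})$ that your long-exact-sequence step needs. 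Establishing $u\nmid N_E^K/\overline N_E^K$ from $v\nmid N_E^F/\overline N_E^F$ is a short but nontrivial conductor-comparison argument by reduction type, and it is the real content of the lemma; your sketch does not supply a substitute for it. To salvage your approach you would need to replace the goal "$E[p](K_{\cyc,\eta})=0$" by "$E[p^\infty](K_{\cyc,\eta})$ is $p$-divisible" and supply the conductor argument (or an equivalent), and you would also need the separate vanishing input to handle the case where $E/K$ is additive at $u$.
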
 
		{\it Proof:} If  $E/K$ has additive reduction at $u$, then by the  proof of  \cite[Lemma $2.2$]{MR3629245},	$\HH^1(K_{\cyc,\eta},E[p^\infty])=\HH^1(K_{\cyc,\eta},E[p]) =0$  and thus Ker$(\gamma_\eta)$ in \eqref{equ21} is $=0$. On the other hand, if $u$ as above, is prime of good or multiplicative reduction of $E$ over $K$, then we claim that $u \nmid N_E^K/\overline{N}_E^K$. Assume the claim at the moment. Then using \cite[Lemma $4.1.2$]{EPW}, we deduce $E[p^\infty](K_{\cyc,\eta})$
		is divisible. By inflation-restriction sequence, we know Ker$(\gamma_\eta) \cong \HH^0(K_{\cyc,\eta},E[p^\infty])/{\pi}$ and 
		hence  the Ker$(\ga_\eta)$ in \eqref{equ21} is trivial. Thus it reduces to establish the claim that $u \nmid N_E^K/\overline{N}_E^K$ to complete the proof of the lemma. 
		
		If $E$ is  good at $v$ over $F$, then it remains good over at a prime dividing $v$ in any extension of $F$; in particular, $u \nmid N_E^K$. 	
		\par Next assume that $E/F$ has multiplicative reduction at $v$; then the same is true for $E/K$ at $u$; in particular $v || N_E^F$, $u || N_E^K$ and \begin{small}{$
				\overline{\rho}_E|_{I_v} \sim 
				\begin{pmatrix}
				1 &  *\\
				0 & 1
				\end{pmatrix}
				$}\end{small}  \cite[Proposition 2.12]{ddt}. Thus, the image of $\overline{\rho}_E|_{I_v}$ has order $p$ or $1$, depending on $*$. Recall, by our hypothesis, $v \nmid N_E^F/\overline{N}_E^F$ and hence $v \mid \mid \overline{N}_E^F$.
		If $u \mid N_E^K/\overline{N}_E^K$, 
		then we have $u \nmid \overline{N}_E^K$.
		Hence, 
		$\overline{\rho}_E|_{G_K}$ is unramified at $u$.  So  $\# \, \overline{\rho}_E (I_v)  = \# \, \overline{\rho}_E(I_v/I_u)=1$ as $p \nmid e_v(K/F).$
		Therefore, $v \nmid \overline{N}_E^F$ which is a contradiction.

		\par	Now assume that $E/F$ has additive reduction at $v$ i.e. $v^2 \mid N_E^F$. Then $v \nmid N_E^F/\overline{N}_E^F$ implies that $v^2 \mid \overline{N}_E^F$
		and so $\overline{\rho}_E|_{G_F}$ is ramified at $v$. 
		If $E/K$ becomes good at $u$, we have $u \nmid N_E^K$. Thus it reduced to consider the case where $E/K$ is multiplicative at $u$ i.e. $u \mid \mid N_E^K$.
		If $\overline{\rho}_E|_{G_K}$ is unramified at $u$, then again using $p \nmid e_v(K/F)$,
		we obtain that $\# \, \overline{\rho}_E (I_v)  = \# \, \overline{\rho}_E(I_v/I_u)=1$,
		contradicting $\overline{\rho}_E|_{G_F}$ is ramified at $v$.
		Therefore,  $\overline{\rho}_E|_{G_K}$ is ramified at $u$
		and so $u \mid \overline{N}_E^K$. As $u \mid \mid N_E^K$, $u \nmid N_E^K/{\overline{N}_E^K}$. This completes the proof of the Lemma. \qed

	\medskip
	
	\begin{prop} \label{lem1}
		Let $E\in \{E_1, E_2\}$ and $v$, in the setting of Lemma \ref{lem00}, be a  prime in $\Sigma \setminus \Sigma_0$ such that $v \nmid p \infty$. Then for any prime $w$  in $F_\cyc$ with $w\mid v$, the natural map $\ga_w$ induced by Kummer theory,
		\begin{small}{\begin{equation}\label{192inj}
				\ga_w: \HH^1(F_{\cyc,w}, E[p] \otimes \widetilde{\sigma}) \to 
				\HH^1(F_{\cyc,w}, E[p^\infty] \otimes \sigma)[\pi],
				\end{equation}}\end{small}
 is injective.  \qed 
 \end{prop}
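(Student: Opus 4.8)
\emph{Proof strategy.} The plan is to compute $\ker(\ga_w)$ and show it is trivial. By construction $\ga_w$ is the map on $\HH^1(F_{\cyc,w},-)$ induced by the inclusion $E[p]\otimes\widetilde{\sigma}\cong (E[p^\infty]\otimes\sigma)[\pi]\hookrightarrow E[p^\infty]\otimes\sigma$ of \S\ref{sel1}; equivalently it occurs in the long exact cohomology sequence attached to the multiplication-by-$\pi$ sequence $0\to E[p]\otimes\widetilde{\sigma}\to E[p^\infty]\otimes\sigma\overset{\pi}{\lra}E[p^\infty]\otimes\sigma\to 0$ of $G_{F_{\cyc,w}}$-modules. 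That sequence identifies $\ker(\ga_w)$ with $\HH^0(F_{\cyc,w},E[p^\infty]\otimes\sigma)/\pi\,\HH^0(F_{\cyc,w},E[p^\infty]\otimes\sigma)$, so the proposition is equivalent to the statement that $\HH^0(F_{\cyc,w},E[p^\infty]\otimes\sigma)$ is a $\pi$-divisible $O$-module.

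To prove that, I would pass to $K_{\cyc,\eta}$. Since $\sigma$ factors through $\De=\Gal(K/F)$ and $K\subseteq K_{\cyc,\eta}$, the group $G_{K_{\cyc,\eta}}$ acts trivially on the lattice $L_\sigma$, so $\HH^0(K_{\cyc,\eta},E[p^\infty]\otimes\sigma)=E[p^\infty](K_{\cyc,\eta})\otimes_{\Z_p}L_\sigma$, and taking invariants under $\De_v=\Gal(K_{\cyc,\eta}/F_{\cyc,w})$ gives $\HH^0(F_{\cyc,w},E[p^\infty]\otimes\sigma)=\big(E[p^\infty](K_{\cyc,\eta})\otimes_{\Z_p}L_\sigma\big)^{\De_v}$. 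Lemma \ref{lem00} says precisely that the map \eqref{equ21} is injective, equivalently that $E[p^\infty](K_{\cyc,\eta})$ is $p$-divisible; as $L_\sigma$ is $\Z_p$-free it follows that $E[p^\infty](K_{\cyc,\eta})\otimes_{\Z_p}L_\sigma$ is $p$-divisible, hence $\pi$-divisible.

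It remains to show that $\pi$-divisibility descends to $\De_v$-invariants, and for this the key point is $p\nmid\#\De_v$. This is where $v\notin\Sigma_0$ enters: as $v\nmid p\infty$ the cyclotomic $\Z_p$-extension is unramified at $v$, so the ramification index of $K_{\cyc,\eta}/F_{\cyc,w}$ equals $e_v(K/F)$, which is prime to $p$ by \eqref{Sigma_0}; and the residue field of $F_{\cyc,w}$ is the $\Z_p$-extension of that of $F_v$, so the residue extension of $K_{\cyc,\eta}/F_{\cyc,w}$ has degree prime to $p$ as well. Once $p\nmid\#\De_v$ is known, the normalized averaging element $\tfrac{1}{\#\De_v}\sum_{g\in\De_v}g$ is an idempotent on $O[\De_v]$-modules, so $(-)^{\De_v}$ is exact and commutes with reduction mod $\pi$; therefore $\big(E[p^\infty](K_{\cyc,\eta})\otimes_{\Z_p}L_\sigma\big)^{\De_v}$ is $\pi$-divisible and $\ker(\ga_w)=0$.

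The only genuinely delicate step is the verification that $\#\De_v$ is prime to $p$; everything else is formal and, importantly, reuses the reduction-type analysis already carried out in the proof of Lemma \ref{lem00} rather than repeating it.
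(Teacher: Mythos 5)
Your proof is correct and rests on exactly the two ingredients the paper uses: the injectivity over $K_{\cyc,\eta}$ from Lemma \ref{lem00} (equivalently, $p$-divisibility of $E[p^\infty](K_{\cyc,\eta})$), and the arithmetic fact that $p\nmid\#\De_\eta$ (your $\De_v$). Only the implementation of the descent step differs. You work at the level of $\HH^0$: you identify $\ker(\ga_w)$ with $\HH^0(F_{\cyc,w},E[p^\infty]\otimes\sigma)/\pi$, show this $\HH^0$ is $\pi$-divisible over $K_{\cyc,\eta}$ after tensoring the $p$-divisible group $E[p^\infty](K_{\cyc,\eta})$ with the $O$-free lattice $L_\sigma$, and then descend divisibility to $\De_\eta$-invariants via the averaging idempotent $\tfrac{1}{\#\De_\eta}\sum_{g}g$. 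The paper stays at $\HH^1$: it tensors the injection \eqref{equ21} against $L_\sigma$, places $\ga_w$ over $\ga_\eta\otimes L_\sigma$ in a commutative square, and invokes inflation--restriction, noting that $\HH^1(\De_\eta,-)$ vanishes on $p$-primary coefficients since $p\nmid\#\De_\eta$, so the restriction map $\HH^1(F_{\cyc,w},-)\to\HH^1(K_{\cyc,\eta},-)$ is injective. These are two faces of the same fact---the exactness of $(-)^{\De_\eta}$, equivalently the vanishing of higher $\De_\eta$-cohomology, on $p$-torsion modules---so neither route buys anything essential over the other; the paper's is a little more compressed, while yours makes the role of $p$-divisibility of $E[p^\infty](K_{\cyc,\eta})$ explicit. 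Your separate verification that both the ramification index and the residue degree of $K_{\cyc,\eta}/F_{\cyc,w}$ are prime to $p$ spells out a point the paper leaves terse, and is a welcome clarification.
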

	
	\begin{proof}
		Recall $\eta$ is a prime of $K_\cyc$ with $\eta \mid w \mid v$. By our choice of $v$ in this proposition, it follows from Lemma \ref{lem00} that $\ga_\eta$ in \eqref{equ21} is injective.
		\par Note that as $K \cap F_\cyc=F$, we can identify $\De \cong\mathrm{Gal}(K_\cyc/F_\cyc)$ and the decomposition subgroup $\De_\eta \cong\Gal(K_{\cyc,\eta}/F_{\cyc,w})$. As  $v\nmid p$ and by our hypothesis $p \nmid e(K/F)$, we deduce $p \nmid \# \De_\eta$.
		
		Tensoring equation \eqref{equ21} with $L_{\sigma}$, we obtain that 
		the map $\HH^1(K_{\cyc,\eta}, E[p]\otimes \widetilde{\sigma})
		\to \HH^1(K_{\cyc,\eta}, E[p^\infty]\otimes \sigma)[\pi]$ is injective. Now using the inflation-restriction sequence, as $p \nmid \De_\eta$ it follows that $\ga_w: \HH^1(F_{\cyc,w}, E[p]\otimes \widetilde{\sigma})
		\to \HH^1(F_{\cyc,w}, E[p^\infty]\otimes \sigma)[\pi]$ is injective as well. This completes the proof of the proposition.
	\end{proof}
	
	\medskip
	
	\noindent Now, we can prove the main result of this subsection:
	\begin{theorem}\label{thm2}
		Let $p$ be an odd prime and $k$ be a number field such that all primes of $k$ lying above $p$ are ramified in $k(\mu_p)/k$. Let $E_1, E_2$ be two elliptic curves over $k$ satisfying the hypothesis {(H)} and $E_1[p] \cong E_2[p]$ as $G_k$-modules.
		Then $\lambda_{E_1}^{\Sigma_0}(\sigma)=\lambda_{E_2}^{\Sigma_0}(\sigma)$.
	\end{theorem}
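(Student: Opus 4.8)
The plan is to use \eqref{896} to rewrite $\lambda_{E_i}^{\Sigma_0}(\sigma)=\dim_{\mathfrak f}\bigl(S_p^{\Sigma_0}(E_i[p^\infty]\otimes\sigma/F_\cyc)^\vee/\pi\bigr)=\dim_{\mathfrak f}S_p^{\Sigma_0}(E_i[p^\infty]\otimes\sigma/F_\cyc)[\pi]$, which is finite under \textbf{(H)}, and then to evaluate this via the Kummer map $\alpha_i\colon S_p^{\Sigma_0}(E_i[p]\otimes\widetilde{\sigma}/F_\cyc)\to S_p^{\Sigma_0}(E_i[p^\infty]\otimes\sigma/F_\cyc)[\pi]$ in terms of quantities that visibly depend only on the residual module $E_i[p]$ and its reduction at the primes above $p$; since these agree for $E_1$ and $E_2$, the equality follows. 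The main obstacle is the local comparison at $v\mid p$: there $\widetilde{E}_{1,v}[p^\infty]$ and $\widetilde{E}_{2,v}[p^\infty]$ agree only modulo $p$, so $\alpha_i$ is not an isomorphism on the local terms, and one must show that the resulting discrepancy is nonetheless controlled by residual data alone.

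First I would show that the residual imprimitive Selmer groups literally coincide. Every term occurring in \eqref{190} and \eqref{exact3} is built out of $E_i[p]$ as a $G_{F_\cyc}$-module (at $v\nmid p$) and out of $\widetilde{E}_{i,v}[p]$ as a $G_{F_{\cyc,w}}$-module together with the projection $E_i[p]\twoheadrightarrow\widetilde{E}_{i,v}[p]$ (at $v\mid p$). The former coincide by hypothesis. For the latter, restrict to a prime $v_k\mid p$ of $k$ below $v$: since $v_k$ is ramified in $k(\mu_p)/k$, the kernel of reduction is the \emph{unique} $G_{k_{v_k}}$-stable line of $E_i[p]$ on which inertia acts nontrivially, so any $G_{k_{v_k}}$-isomorphism $E_1[p]\xrightarrow{\sim}E_2[p]$ carries it to its counterpart and hence induces an isomorphism $\widetilde{E}_{1,v}[p]\cong\widetilde{E}_{2,v}[p]$ compatible with the projections. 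Fixing all these isomorphisms identifies $\HH^1(F_\Sigma/F_\cyc,E_1[p]\otimes\widetilde{\sigma})$ with $\HH^1(F_\Sigma/F_\cyc,E_2[p]\otimes\widetilde{\sigma})$, the local products $\prod_{v\in\Sigma\setminus\Sigma_0}\mathcal H_v(F_\cyc,E_i[p]\otimes\widetilde{\sigma})$ with one another, and the two global-to-local maps; write $\phi$ for this common map and $C$ for its common target, so that $S_p^{\Sigma_0}(E_1[p]\otimes\widetilde{\sigma}/F_\cyc)=S_p^{\Sigma_0}(E_2[p]\otimes\widetilde{\sigma}/F_\cyc)=\ker\phi$.

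Next I would pass to the $p^\infty$-Selmer group. From \textbf{(H3)} and $p$ being odd (so $K_\cyc/K$ is pro-$p$) one has $E_1[p](K_\cyc)=0$, whence $\HH^0(F_\Sigma/F_\cyc,E_i[p^\infty]\otimes\sigma)=0$ for $i=1,2$, using $(E_i[p^\infty]\otimes\sigma)[\pi]\cong E_i[p]\otimes\widetilde{\sigma}$; the long exact sequence of $0\to E_i[p]\otimes\widetilde{\sigma}\to E_i[p^\infty]\otimes\sigma\xrightarrow{\ \pi\ }E_i[p^\infty]\otimes\sigma\to 0$ then makes $\HH^1(F_\Sigma/F_\cyc,E_i[p]\otimes\widetilde{\sigma})\xrightarrow{\sim}\HH^1(F_\Sigma/F_\cyc,E_i[p^\infty]\otimes\sigma)[\pi]$. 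On the local terms the corresponding map is an isomorphism for $v\in\Sigma\setminus\Sigma_0$ with $v\nmid p\infty$ by Proposition~\ref{lem1} (and trivially at archimedean $v$, $p$ being odd), whereas at $v\mid p$ it is surjective with kernel $K_{i,v}:=\prod_{w\mid v}\HH^0(F_{\cyc,w},\widetilde{E}_{i,v}[p^\infty]\otimes\sigma)/\pi$. Chasing these isomorphisms through \eqref{exact2} and \eqref{exact3} gives $S_p^{\Sigma_0}(E_i[p^\infty]\otimes\sigma/F_\cyc)[\pi]=\phi^{-1}(K_i)$, where $K_i=\prod_{v\mid p}K_{i,v}$ sits inside $C$ via the identifications of the previous step. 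Feeding in additionally that $S_p^{\Sigma_0}(E_i[p^\infty]\otimes\sigma/F_\cyc)$ is $\pi$-divisible (Lemma~\ref{L2}) and that its global-to-local map is surjective (as for \eqref{exact5}, under \textbf{(H4)}; cf.\ \cite{MR2807791}), the same diagram yields $\mathrm{im}\,\phi+K_i=C$, and hence $\lambda_{E_i}^{\Sigma_0}(\sigma)=\dim_{\mathfrak f}\phi^{-1}(K_i)=\dim_{\mathfrak f}\ker\phi+\dim_{\mathfrak f}K_i-\dim_{\mathfrak f}\mathrm{coker}\,\phi$.

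It then remains to note that each of the three terms is independent of $i$. The terms $\dim_{\mathfrak f}\ker\phi$ and $\dim_{\mathfrak f}\mathrm{coker}\,\phi$ depend only on the common map $\phi$, hence are the same for $E_1$ and $E_2$ (and $\mathrm{coker}\,\phi$ is finite, being a quotient of $K_i$). For $\dim_{\mathfrak f}K_i$: since $\widetilde{E}_{i,v}[p^\infty]\otimes\sigma$ is a cofinitely generated $O$-module, $\dim_{\mathfrak f}K_{i,v}=\dim_{\mathfrak f}\HH^0(F_{\cyc,w},\widetilde{E}_{i,v}[p]\otimes\widetilde{\sigma})$, which depends only on $\widetilde{E}_{i,v}[p]$ and is therefore the same for $E_1$ and $E_2$ by the first step. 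Substituting, $\lambda_{E_1}^{\Sigma_0}(\sigma)=\lambda_{E_2}^{\Sigma_0}(\sigma)$. (If one prefers, the term $\dim_{\mathfrak f}\mathrm{coker}\,\phi$ can alternatively be seen to be residual via Poitou--Tate duality, identifying it with the corank of a Selmer group of $(E_i[p]\otimes\widetilde{\sigma})^{*}(1)\cong E_i[p]\otimes\widetilde{\sigma}^{*}$ built from $E_i[p]$ and the kernel-of-reduction filtration at $p$.)
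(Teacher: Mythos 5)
Your proposal is correct and takes essentially the same route as the paper's proof: the same commutative diagram comparing the residual and $p^\infty$-twisted imprimitive Selmer groups, the same use of Proposition~\ref{lem1} for the local comparison at $v\nmid p\infty$, the same appeal to (H3) (giving $E_i(K_\cyc)[p]=0$, hence $g_i$ an isomorphism), the same finite correction terms $\ker h_i\cong \prod_{v\mid p}\prod_{w\mid v}\HH^0(F_{\cyc,w},\widetilde E_{i,v}[p^\infty]\otimes\sigma)/\pi$, and the same key observation that the ramification hypothesis at $p$ forces $\widetilde E_{1,v}[p]\cong\widetilde E_{2,v}[p]$ as $G_{F_v}$-modules, making the entire residual picture identical for $E_1$ and $E_2$. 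The one presentational difference is that you sidestep the paper's explicit proof (via Poitou--Tate duality and Lemma~\ref{L1}) that the residual global-to-local map $\psi^i_{\Sigma_0}$ is surjective: your dimension count $\lambda^{\Sigma_0}_{E_i}(\sigma)=\dim_{\mathfrak f}\ker\phi+\dim_{\mathfrak f}K_i-\dim_{\mathfrak f}\operatorname{coker}\phi$ only needs $\operatorname{coker}\phi$ to be finite and literally the same for both curves, which follows since $\phi$ is the identical map once the residual modules are identified; the paper instead shows $\operatorname{coker}\phi=0$ so that the conclusion drops out of a direct snake lemma. Either way, the argument hinges on the same hypotheses and intermediate lemmas, so this is the same proof in slightly different packaging rather than a genuinely different route.
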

\begin{proof}
		We begin by considering the following commutative diagram induced by multiplication by $\pi$ on $E_i[p^\infty] \otimes \sigma$, for $i=1,2$:
		\begin{small}{\begin{equation}  \label{cd}
				\setlength{\arraycolsep}{1pt}
				\begin{array}{*{9}c}
				0 &\longrightarrow & S_p^{\Sigma_0}(E_i[p]\otimes \widetilde{\sigma}/F_\cyc) 
				& \longrightarrow & \HH^1(F_\Sigma/F_\cyc, E_i[p]\otimes \widetilde{\sigma}) 
				& \overset{\psi^i_{\Sigma_0}}{\longrightarrow} & \prod_{v \in \Sigma \setminus \Sigma_0}
				\mathcal{H}_v(F_\cyc,E_i[p]\otimes \widetilde{\sigma}) \\
				& & \Big\downarrow f_i & & \Big\downarrow g_i & & \Big\downarrow h_i & & \\
				0 &\longrightarrow & S_p^{\Sigma_0}(E_i[p^\infty] \otimes \sigma/F_\cyc)[\pi]
				& \longrightarrow & \HH^1(F_\Sigma/F_\cyc, E_i[p^\infty] \otimes \sigma)[\pi]
				& \overset{\phi_i[\pi]}{\longrightarrow} & \prod_{v \in \Sigma \setminus \Sigma_0}
				\mathcal{H}_v(F_\cyc, E_i[p^\infty] \otimes \sigma)[\pi].
				\end{array}
				\end{equation}}\end{small}
		Since $E_i[p^\infty] \otimes \sigma$ is a divisible $O$-module, 
		we deduce the  middle vertical map $g_i$ is surjective. 
		By assumption (H3) and  Nakayama's lemma, $E_i(K_\cyc)[p]=0$  and 
		hence the map $g_i$ is an isomorphism.
		
		Next, we will show that $\psi^i_{\Sigma_0}$ is surjective. We have the following exact sequence using Poitou-Tate duality:
		\begin{small}{\[
				0 \rightarrow S_p^{\Sigma_0}(E_i[p]\otimes \widetilde{\sigma}/F_\cyc) 
				\longrightarrow \HH^1(F_\Sigma/F_\cyc, E_i[p]\otimes \widetilde{\sigma}) 
				\overset{\psi^i_{\Sigma_0}}{\longrightarrow} \prod_{v \in \Sigma \setminus \Sigma_0}
				\mathcal{H}_v(F_\cyc,E_i[p]\otimes \widetilde{\sigma}) 
				\longrightarrow S^*(E_i[p] \otimes \widetilde{\sigma}/F_\cyc)^\vee 
				\]}\end{small}
		Thus, by Lemma \ref{L1}, to show the surjectivity of  $\psi^i_{\Sigma_0}$, it suffices to show $S_p^{\Sigma_0}(E_i[p]\otimes \widetilde{\sigma}/F_\cyc)$ is finite. 
		Now using Proposition \ref{lem1}, $\ker(h_i) \cong \prod_{v \mid p}^{} \prod_{w \mid v}^{}
		\HH^0(F_{\cyc,w}, \widetilde{E}_{i,v}[p^\infty] \otimes \sigma)/\pi$.
		As the residue field of $K_{\cyc,w}$ for $w \mid p$ is finite, we obtain $\HH^0(F_{\cyc,w}, \widetilde{E}_{i,v}[p^\infty] \otimes \sigma)$ is finite and consequently, ker$(h_i)$ is finite. By  assumption (H4),  $S_p(E_i[p^\infty]/K)[p]$ is finite. As $g_i$ is an isomorphism and ker$(h_i)$ is finite, applying snake lemma in diagram \eqref{cd},  it follows that  $S_p^{\Sigma_0}(E_i[p] \otimes \tilde{\sigma}/F_\cyc)$ is finite. Thus $\psi^i_{\Sigma_0}$ is surjective.
		
		Let $v \mid p$ be a prime of $F$. As $p$ is odd and the action of the inertia subgroup of $G_{F_v}$ on $\ker(E_i[p] \to \overline{E}_{i,v}[p])$ is via the Teichm\"uller character, by the choice of $k$ we conclude this action is nontrivial. Hence $\overline{E}_{i,v}[p]$ can be characterized as the maximal unramified quotient of $E_i[p]$ for $i=1,2$. Now, since $E_1[p] \cong E_2[p]$ as $G_k$-modules, we have $\widetilde{E}_{1,v}[p] \cong \widetilde{E}_{2,v}[p]$ as $G_{F_v}$-modules for every prime $v \mid p$ of $F$ (see the preceding paragraph of \cite[Theorem $2.2$]{MR3503694}). It then follows that
		$\widetilde{E}_{1,v}[p] \otimes \widetilde{\sigma} \cong \widetilde{E}_{2,v}[p] \otimes \widetilde{\sigma}$
		as $G_{F_v}$-modules. Then, from the definition of $S_p^{\Sigma_0}(E_i[p]\otimes \widetilde{\sigma}/F_\cyc)$, we obtain that 
		\begin{small}{\begin{equation} \label{ism2}
				S_p^{\Sigma_0}(E_1[p]\otimes \widetilde{\sigma}/F_\cyc) \cong 
				S_p^{\Sigma_0}(E_2[p] \otimes \widetilde{\sigma}/F_\cyc).
				\end{equation}}\end{small} In particular, \begin{small}{$\mathrm{rank}_{\mathfrak{f}} \  S_p^{\Sigma_0}(E_1[p]\otimes \widetilde{\sigma}/F_\cyc) = \mathrm{rank}_{\mathfrak{f}} \ S_p^{\Sigma_0}(E_2[p]\otimes \widetilde{\sigma}/F_\cyc)$}\end{small}. Moreover, notice that
		$$\mathrm{rank}_{\mathfrak{f}}  \  \  \HH^0(F_{\cyc,w}, \widetilde{E}_{i,v}[p^\infty] \otimes \sigma)/\pi= \mathrm{rank}_{\mathfrak{f}}  \  \  
		\HH^0(F_{\cyc,w}, (\widetilde{E}_{i,v}[p^\infty] \otimes \sigma)[\pi]) =\mathrm{rank}_{\mathfrak{f}}  \  \  
		\HH^0(F_{\cyc,w}, \widetilde{E}_{i,v}[p] \otimes \tilde{\sigma})$$
		Hence,  $\mathrm{rank}_{\mathfrak{f}} \  \text{ker}(h_1) = \mathrm{rank}_{\mathfrak{f}} \ \text{ker}(h_2)$.

		Now we know  $\psi^i_{\Sigma_0}$  is surjective and $g$ is an isomorphism. Then again applying snake lemma in diagram \eqref{cd}, it is immediate that   
		$\mathrm{rank}_{\mathfrak{f}} \ S_p^{\Sigma_0}(E_1[p^\infty] \otimes \sigma/F_\cyc)^\vee/{\pi} = \mathrm{rank}_{\mathfrak{f}} \ S_p^{\Sigma_0}(E_2[p^\infty] \otimes \sigma/F_\cyc)^\vee/\pi$. This completes the proof of the theorem.
	\end{proof}
	\begin{remark}
		Recall, a prime $v \mid p$ of $F$ is said to be a non-anomalous prime for $E/K$ 
		if $|\widetilde{E}_v(k_u)|$ is not divisible by $p$, for every prime  $u \mid v$  of $K$, where $k_u$ is the residue field at $u$.   As before, assume the hypothesis {\bf (H)}. In addition, assume either (i) $v$ is non-anomalous for $E/K$ or (ii) $(\widetilde{E}_v[p^\infty] \otimes \sigma)^{G_{F_{\cyc,w}}}$ is divisible,
		for primes $w \mid v$ of $F_\cyc$.  Then from the proof of Theorem \ref{thm2}, we can deduce that
		\begin{small}{\begin{equation} \label{ism3}
				S_p^{\Sigma_0}(E[p]\otimes \widetilde{\sigma}/F_\cyc) \cong
				S_p^{\Sigma_0}(E[p^\infty] \otimes \sigma/F_\cyc)[\pi].
				\end{equation}}\end{small}
	\end{remark}	
	Using Theorem \ref{thm2} and Corollary \ref{colmarch},  we deduce the following corollary:
	\begin{cor}\label{col8}
		Let us keep the hypotheses and setting of Theorem \ref{thm2}. Then for an irreducible orthogonal representation $\sigma$ of $\De$, we get 
		\begin{small}{\begin{equation} \label{relation8}
				s_p(E_1,\sigma) +\sum_{v \in \Sigma_0}^{}  \de_{E_1,v} (\sigma) \equiv s_p(E_2,\sigma) +\sum_{v \in \Sigma_0}^{}  \de_{E_2,v} (\sigma)    \pmod  2. \qed 
				\end{equation}}\end{small}
	\end{cor}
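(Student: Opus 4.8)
The plan is to deduce Corollary~\ref{col8} directly by combining Theorem~\ref{thm2} with Corollary~\ref{colmarch}, applied separately to $E_1$ and to $E_2$. First I would record that the hypotheses of Corollary~\ref{colmarch} are in force for each $E_i$ with $\Sigma'=\Sigma_0$: by (H1) and (H2) together with the Remark following Hypothesis~\textbf{(H)}, both $E_1$ and $E_2$ have good, ordinary reduction at every prime of $F$ lying above $p$; by (H4) and the same Remark, $S_p(E_i/K_\cyc)[p]$ is finite for $i=1,2$; and since $E_1[p]\cong E_2[p]$ as $G_K$-modules one has $E_1[p](K)=E_2[p](K)$, so that (H3) forces $E_i[p](K)=0$ for $i=1,2$, which is exactly the hypothesis under which Lemma~\ref{L2}, and hence Corollary~\ref{colmarch}, applies with $\Sigma'=\Sigma_0$.

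Granting this, I would apply Corollary~\ref{colmarch} with $\Sigma'=\Sigma_0$ to $E_1$ and to $E_2$, obtaining for $i=1,2$ the congruences
\[
s_p(E_i,\sigma)\;\equiv\;\lambda_{E_i}^{\Sigma_0}(\sigma)+\sum_{v\in\Sigma_0}\de_{E_i,v}(\sigma)\pmod{2}.
\]
By Theorem~\ref{thm2} we have $\lambda_{E_1}^{\Sigma_0}(\sigma)=\lambda_{E_2}^{\Sigma_0}(\sigma)$. Subtracting the two congruences, cancelling these equal terms, and using $-x\equiv x\pmod 2$ to bring both local sums to the same side, we arrive at
\[
s_p(E_1,\sigma)+\sum_{v\in\Sigma_0}\de_{E_1,v}(\sigma)\;\equiv\;s_p(E_2,\sigma)+\sum_{v\in\Sigma_0}\de_{E_2,v}(\sigma)\pmod{2},
\]
which is the claimed identity.

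I do not expect any genuine obstacle at this step: all the content has been absorbed into Theorem~\ref{thm2} --- the equality of the imprimitive multiplicities, obtained from the $G_{F_v}$-isomorphism $\widetilde{E}_{1,v}[p]\cong\widetilde{E}_{2,v}[p]$ at primes above $p$, the ensuing isomorphism of mod-$\pi$ imprimitive Selmer groups \eqref{ism2}, and the snake-lemma chase in the commutative diagram \eqref{cd} --- and into Corollary~\ref{colmarch}, which merely repackages Greenberg's parity relation \eqref{equ22} through \eqref{relation}. The only point requiring a moment's care is the transfer of Hypothesis~\textbf{(H)} from $E_1$ to $E_2$ described in the first paragraph, which legitimises invoking Corollary~\ref{colmarch} for $E_2$ as well as for $E_1$.
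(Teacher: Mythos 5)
Your proof is correct and follows exactly the route the paper intends: apply Corollary~\ref{colmarch} (with $\Sigma'=\Sigma_0$) to each of $E_1$ and $E_2$, invoke Theorem~\ref{thm2} to cancel the imprimitive $\lambda$-invariants, and work modulo $2$. The verification that Hypothesis~\textbf{(H)} passes from $E_1$ to $E_2$ is the right care to take, and your use of $E_i[p](K)=0$ (rather than irreducibility of $E_i[p]$ as a $G_K$-module) is exactly what Lemma~\ref{L2} asks for, so the invocation is legitimate.
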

	Further by Corollary \ref{colfeb27} and Corollary \ref{col8}, the comparison of parity of $s_p(E_1,\sigma)$ and $s_p(E_2,\sigma)$ reduces to comparing  the parity of $\langle \sigma_v, \chi \rangle\langle \rho_{E_i,v}, \chi \rangle$ with $\chi \in  \mathrm{Irr}_\mathcal{F}(\De_v)$, for each $v \in \Sigma_0$.
	
	\subsection{Comparison of the parity of $\de_{E_i,v}(\sigma) $}  \label{sub1} 
	
	In this subsection, we  compare the parity of the local Iwasawa invariants $\de_{E_1,v}(\sigma) $ with $\de_{E_2,v}(\sigma) $ via the study of the quantity $\langle \sigma_v, \chi \rangle(\langle \rho_{E_1,v}, \chi \rangle - \langle \rho_{E_2,v}, \chi \rangle) \pmod 2$, according to the reduction type of $E_1$ and $E_2$ at each $v \in \Sigma_0$. 
	\begin{lemma}[good - good] \label{lem4}
		Let $v \nmid p$ be a prime of $F$ in $\Sigma_0$ such that $E_1$ and $E_2$ are good  at $v$. Then $\de_{E_1,v}(\sigma) \equiv \de_{E_2,v}(\sigma) \pmod{2}$.
	\end{lemma}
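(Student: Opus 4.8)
The plan is to rewrite $\delta_{E_i,v}(\sigma)$ modulo $2$ as an invariant of the $G_{F_{\cyc,v}}$-module $E_i[p]$ alone, and then to conclude from the isomorphism $E_1[p]\cong E_2[p]$. Note that here $v\in\Sigma_0$ with $v\nmid p\infty$ and $E_1,E_2$ good at $v$, so by \eqref{Sigma_0} we have $p\mid e_v(K/F)$; this ramification plays no role in what follows. After enlarging $\mathcal F$ so that it contains the $(p^2-1)$-th roots of unity, I would start from \eqref{local} together with the good-reduction case \eqref{3891}: there $\rho_{E_i,v}\cong\varphi_{E_i,v}\oplus\psi_{E_i,v}$ as $G_{F_{\cyc,v}}$-representations, with $\varphi_{E_i,v},\psi_{E_i,v}$ unramified of order dividing $p^2-1$ and $\varphi_{E_i,v}\psi_{E_i,v}=\omega_v$, and $\langle\rho_{E_i,v},\chi\rangle\in\{0,1,2\}$ for each irreducible $\chi$, the value $2$ occurring only when $\chi=\varphi_{E_i,v}=\psi_{E_i,v}$. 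The terms of \eqref{local} with $\langle\rho_{E_i,v},\chi\rangle=2$ are even, so one obtains
\[
\delta_{E_i,v}(\sigma)\equiv\langle\sigma_v,\varphi_{E_i,v}\rangle+\langle\sigma_v,\psi_{E_i,v}\rangle=\dim_{\mathcal F}\Hom_{G_{F_{\cyc,v}}}\!\big(\rho_{E_i,v},\sigma_v\big)\pmod 2,
\]
where $\langle\sigma_v,\varphi_{E_i,v}\rangle$ denotes the multiplicity of $\varphi_{E_i,v}$ in $\sigma_v$ as $G_{F_{\cyc,v}}$-representations (which is $0$ unless $\varphi_{E_i,v}$ factors through $\De_v$). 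In particular the right-hand side is determined by the unordered pair $\{\varphi_{E_i,v},\psi_{E_i,v}\}$.

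Second, I would identify that pair with the semisimplification of $E_i[p]$. Restricting the given $G_k$-isomorphism to $G_{F_{\cyc,v}}\hookrightarrow G_k$ and using that $E_i$ has good reduction at $v\nmid p$, so $E_i[p]$ is an unramified $G_{F_{\cyc,v}}$-module whose semisimplification is the mod-$\pi$ reduction $\overline{\varphi}_{E_i,v}\oplus\overline{\psi}_{E_i,v}$ of $\rho_{E_i,v}$, we get $\{\overline{\varphi}_{E_1,v},\overline{\psi}_{E_1,v}\}=\{\overline{\varphi}_{E_2,v},\overline{\psi}_{E_2,v}\}$ as characters of $G_{F_{\cyc,v}}$. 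The point on which everything turns (though it is quick) is to lift this to an equality of the $p$-adic characters themselves: since $\varphi_{E_i,v},\psi_{E_i,v}$ have order dividing $p^2-1$, hence prime to $p$, reduction modulo $\pi$ is injective on them, because the ratio of two characters of order prime to $p$ with equal reductions has image a finite subgroup of prime-to-$p$ order inside the pro-$p$ group $1+\pi O$, hence is trivial. It is essential here that we work over $F_{\cyc,v}$ and not over $F_v$, where these characters are unramified of infinite order. Hence $\{\varphi_{E_1,v},\psi_{E_1,v}\}=\{\varphi_{E_2,v},\psi_{E_2,v}\}$, so $\rho_{E_1,v}\cong\rho_{E_2,v}$ over $G_{F_{\cyc,v}}$, and the displayed congruence yields $\delta_{E_1,v}(\sigma)\equiv\delta_{E_2,v}(\sigma)\pmod 2$.

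Apart from the Teichm\"uller-lifting step, everything is routine bookkeeping with \eqref{local} and \eqref{3891}; in fact the identity preceding ``$\pmod 2$'' is an equality, so one even gets $\delta_{E_1,v}(\sigma)=\delta_{E_2,v}(\sigma)$, but only the congruence is needed here.
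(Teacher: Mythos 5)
Your argument is essentially the same as the paper's: you invoke \eqref{local} and \eqref{3891}, observe that $V_p(E_i)\otimes\mathcal F\cong\mathcal F(\varphi_{E_i,v})\oplus\mathcal F(\psi_{E_i,v})$ with $\varphi_{E_i,v},\psi_{E_i,v}$ unramified of order prime to $p$, and deduce from $E_1[p]\cong E_2[p]$ that $\{\varphi_{E_1,v},\psi_{E_1,v}\}=\{\varphi_{E_2,v},\psi_{E_2,v}\}$. The only difference is that you spell out the Teichm\"uller-lift step (injectivity of reduction on characters of prime-to-$p$ order) that the paper leaves implicit, and you note the stronger conclusion $\delta_{E_1,v}(\sigma)=\delta_{E_2,v}(\sigma)$; both are correct and add nothing new in substance.
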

	\begin{proof}
		$E_1$ and $E_2$ are good  at $v$. Then using equation  \eqref{3891} in the relation  \eqref{local},  we only need to compute $\langle \rho_{E_1,v}, \chi \rangle  - \langle \rho_{E_2,v}, \chi \rangle \pmod 2 $  for $\chi \in \{\varphi_{E_1,v}, \psi_{E_1,v}, \varphi_{E_2,v}, \psi_{E_2,v} \}$. We know $V_p(E_i) \otimes_{\Q_p}\mathcal{F} \cong \mathcal{F}(\varphi_{E_i,v})\oplus  \mathcal{F}(\psi_{E_i,v})$ as $G_{F_{\cyc,v}}$ modules. Also $E_1[p] \cong E_2[p]$ as $G_\Q$-modules and $\varphi_{E_i,v}, \psi_{E_i,v}$ are unramified characters of order prime to $p$. Thus we deduce $\{\varphi_{E_1,v}, \psi_{E_1,v}\} = \{\varphi_{E_2,v}, \psi_{E_2,v}\}$. Now using \eqref{local}, $\de_{E_1,v}(\sigma) \equiv \de_{E_2,v}(\sigma) \pmod{2}$.
	\end{proof}
	\begin{lemma}[split - split] \label{lem8}
		Let $v \in \Sigma_0$ be a prime of split multiplicative reduction for $E_1$ and $E_2$. Then 
		$\de_{E_1,v}(\sigma) \equiv \de_{E_2,v}(\sigma) \pmod{2}$.
	\end{lemma}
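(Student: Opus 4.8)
The plan is to reduce, exactly as in the proof of Lemma~\ref{lem4}, to a direct computation of $\langle \rho_{E_i,v},\chi\rangle$ via \eqref{local}, and then to observe that the resulting quantity is in fact independent of the curve. Since $v \in \Sigma_0$ is a prime of split multiplicative reduction for $E_i$ ($i=1,2$), formula \eqref{3892} applies and gives $\langle \rho_{E_i,v},\chi\rangle = 1$ when $\chi = \omega_v$ and $\langle \rho_{E_i,v},\chi\rangle = 0$ for every other $\chi \in \mathrm{Irr}_{\mathcal F}(\De_v)$. Feeding this into \eqref{local} yields
\begin{equation*}
\de_{E_i,v}(\sigma) = \sum_{\chi \in \mathrm{Irr}_{\mathcal F}(\De_v)} \langle \sigma_v,\chi\rangle \, \langle \rho_{E_i,v},\chi\rangle = \langle \sigma_v, \omega_v\rangle
\end{equation*}
for both $i = 1$ and $i = 2$.

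The only point to verify is that the character $\omega_v$ appearing here does not depend on $E_i$: this is immediate from its definition $\omega_v = \omega_p|_{G_{F_{\cyc,v}}}$, where $\omega_p$ is the $p$-adic cyclotomic character, a quantity attached to the base field and the prime $p$ alone (and visible in \eqref{varkappa}). Consequently $\de_{E_1,v}(\sigma) = \de_{E_2,v}(\sigma)$ on the nose, which a fortiori gives the asserted congruence modulo $2$.

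I expect no serious obstacle here; in fact this case is slightly simpler than the good--good case of Lemma~\ref{lem4}, since we do not even need to invoke the congruence $E_1[p]\cong E_2[p]$: the unique one-dimensional $\rho_{E_i,v}$-invariant quotient on which inertia acts through $\omega_v$ is forced by the split multiplicative reduction type itself, so the matching of the relevant characters is automatic rather than a consequence of the mod-$p$ isomorphism. (The same argument, with $\omega_v$ replaced by $\omega_v\varkappa_v$ and with $\varkappa_v$ being the curve-independent unramified quadratic character, will handle the non-split multiplicative case.)
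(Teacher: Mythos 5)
Your proof is correct and follows essentially the same route as the paper: plug \eqref{3892} into \eqref{local} to get $\de_{E_i,v}(\sigma) \equiv \langle \sigma_v,\omega_v\rangle \pmod 2$ for $i=1,2$, and note that $\omega_v$ is intrinsic to the base field, not to the curve. Your parenthetical observation that the mod-$p$ congruence $E_1[p]\cong E_2[p]$ is not actually needed here (in contrast to the good--good case) is accurate and a nice point, though the paper does not dwell on it.
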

	\begin{proof}
		In this case, using equation  \eqref{3892} in the relation  \eqref{local}, $\de_{E_i,v}(\sigma) \equiv \langle \sigma_v, \omega_v \rangle \pmod{2}$, which is the same for both $E_1$ and $E_2$. Thus $\de_{E_1,v}(\sigma) \equiv \de_{E_2,v}(\sigma) \pmod{2}$.
	\end{proof}
	\begin{lemma} [non-split - non-split] \label{lem9}
		%Suppose that $E$ is an elliptic curve with non-split multiplicative reduction at a finite prime $v \nmid p$ of $F$. Then $\de_{E,v}(\sigma)\equiv \langle \sigma_v, \omega_v\varkappa_v \rangle \pmod{2}$. 
		Let $E_1$ and $E_2$ both  have non-split multiplicative reduction at a $v \in \Sigma_0$. Then $\de_{E_1,v}(\sigma)\equiv \de_{E_2,v}(\sigma) \pmod{2}$.
	\end{lemma}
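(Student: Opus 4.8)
The plan is to follow the same two-line argument as in the split multiplicative case (Lemma \ref{lem8}), now feeding the non-split branch of \eqref{3892} into the local formula \eqref{local}. First I would recall from \eqref{varkappa} that at a prime $v$ of non-split multiplicative reduction one has $\rho_{E_i,v} \sim \left(\begin{smallmatrix} \omega_v & * \\ 0 & 1 \end{smallmatrix}\right)\otimes\varkappa_v$, where $\varkappa_v$ is \emph{the} unramified quadratic character of $G_{F_{\cyc,v}}$ and $\omega_v$ is the restriction of the $p$-adic cyclotomic character. The key observation is that neither $\varkappa_v$ nor $\omega_v$ sees the curve: $\varkappa_v$ is pinned down abstractly as the unique unramified quadratic character of $G_{F_{\cyc,v}}$, and by \cite[Prop. 2.12]{ddt} the shape above depends only on the reduction type at $v$, which is non-split multiplicative for both $E_1$ and $E_2$ by hypothesis. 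Hence \eqref{3892} gives $\langle\rho_{E_1,v},\chi\rangle = \langle\rho_{E_2,v},\chi\rangle$ for every $\chi$, both being the indicator of the condition $\chi = \omega_v\varkappa_v$.

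Substituting this into \eqref{local}, I would conclude that $\delta_{E_i,v}(\sigma)$ equals $\langle\sigma_v,\omega_v\varkappa_v\rangle$ when $\omega_v\varkappa_v$ factors through $\De_v$ (so that it occurs in $\mathrm{Irr}_{\mathcal{F}}(\De_v)$ after inflation), and equals $0$ otherwise; in either case the value is visibly the same for $i=1$ and $i=2$. This yields $\delta_{E_1,v}(\sigma)=\delta_{E_2,v}(\sigma)$ on the nose, and in particular the congruence mod $2$ asserted in the lemma.

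There is essentially no obstacle here; if anything this case is cleaner than the good--good case of Lemma \ref{lem4}, since we never need to invoke $E_1[p]\cong E_2[p]$ to match up the relevant characters --- the single character $\omega_v\varkappa_v$ is intrinsic to $v$ (and $p$). The only thing worth spelling out is the curve-independence of $\varkappa_v$ and of the matrix shape in \eqref{varkappa}, which is exactly the input just recalled.
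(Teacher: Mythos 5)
Your proposal matches the paper's proof: both plug the non-split branch of \eqref{3892} into \eqref{local} to get $\de_{E_i,v}(\sigma)\equiv\langle\sigma_v,\omega_v\varkappa_v\rangle\pmod 2$, and observe that the right-hand side is curve-independent. The extra remark that this case doesn't even require $E_1[p]\cong E_2[p]$, and the clarification about $\omega_v\varkappa_v$ needing to factor through $\De_v$, are correct refinements but the core argument is identical.
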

	\begin{proof}
		Recall $\varkappa_v $ is defined in \eqref{varkappa}. Again in this case, using equation  \eqref{3892} in the relation  \eqref{local}, $\de_{E_i,v}(\sigma) \equiv \langle \sigma_v, \omega_v\varkappa_v \rangle \pmod{2}$, which is the same for both $E_1$ and $E_2$. Thus $\de_{E_1,v}(\sigma) \equiv \de_{E_2,v}(\sigma) \pmod{2}$.
	\end{proof}
	\begin{lemma}[good - split]\label{lem11}
		Let $v \in \Sigma_0$. Assume $E_1$ has good reduction at $v$
		and $E_2$ has split multiplicative reduction at $v$, then 
		$\de_{E_1,v}(\sigma) \equiv \de_{E_2,v}(\sigma) + \langle \sigma_v, 1_v \rangle \pmod{2}$.
	\end{lemma}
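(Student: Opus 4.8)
The plan is to expand both local invariants via \eqref{local}, insert the explicit shape of $\langle\rho_{E_i,v},\chi\rangle$ dictated by the reduction type, and then pin down the unramified characters $\varphi_{E_1,v},\psi_{E_1,v}$ attached to the good reduction of $E_1$ in terms of $\omega_v$ and the trivial character. Since $E_2$ has split multiplicative reduction at $v$, combining \eqref{3892} with \eqref{local} gives $\de_{E_2,v}(\sigma)\equiv\langle\sigma_v,\omega_v\rangle\pmod 2$, exactly as in the proof of Lemma~\ref{lem8}. Since $E_1$ has good reduction at $v$, combining \eqref{3891} with \eqref{local} gives, in all cases, $\de_{E_1,v}(\sigma)\equiv\langle\sigma_v,\varphi_{E_1,v}\rangle+\langle\sigma_v,\psi_{E_1,v}\rangle\pmod 2$ (if $\varphi_{E_1,v}=\psi_{E_1,v}$ both sides are $\equiv 0$, so the coincidence is harmless modulo $2$). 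Thus everything reduces to the identity of multisets $\{\varphi_{E_1,v},\psi_{E_1,v}\}=\{\omega_v,1_v\}$ of characters of $G_{F_{\cyc,v}}$, after which the lemma is immediate from the two congruences.

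To obtain that identity I would pass to Jordan--H\"older factors of the mod-$\pi$ reductions. From the Tate parametrization of a split multiplicative curve, \eqref{varkappa} shows the $G_{F_{\cyc,v}}$-module $E_2[p]\otimes_{\F_p}\mathfrak f$ has semisimplification $\ov\omega_v\oplus 1_v$, where $\ov\omega_v$ denotes the reduction of $\omega_v$ modulo $\pi$. On the other side, $E_1$ has good reduction at $v\nmid p$, so (after enlarging $\mathcal F$ if necessary) $V_p(E_1)\otimes_{\Q_p}\mathcal F\cong\mathcal F(\varphi_{E_1,v})\oplus\mathcal F(\psi_{E_1,v})$ with $\varphi_{E_1,v},\psi_{E_1,v}$ unramified of order dividing $p^2-1$; reducing a $G_{F_{\cyc,v}}$-stable $O$-lattice modulo $\pi$, the semisimplification of $E_1[p]\otimes_{\F_p}\mathfrak f$ is $\ov\varphi_{E_1,v}\oplus\ov\psi_{E_1,v}$. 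Now $E_1[p]\cong E_2[p]$ as $G_k$-modules, hence (restricting along a decomposition group $G_{F_{\cyc,v}}\hookrightarrow G_k$) as $G_{F_{\cyc,v}}$-modules, so the two semisimplifications agree: $\{\ov\varphi_{E_1,v},\ov\psi_{E_1,v}\}=\{\ov\omega_v,1_v\}$. Finally, reduction modulo $\pi$ is injective on unramified characters of $G_{F_{\cyc,v}}$ of order prime to $p$ — such a character is determined by its value at a Frobenius element, a root of unity of order prime to $p$ recovered from its image in $\mathfrak f^\times$ — and both $\omega_v$ (of order dividing $p-1$) and $1_v$ lie in this set, so $\{\varphi_{E_1,v},\psi_{E_1,v}\}$ is forced to be the unique preimage multiset $\{\omega_v,1_v\}$; in particular this is automatically consistent with $\varphi_{E_1,v}\psi_{E_1,v}=\omega_v$.

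Assembling the pieces, $\de_{E_1,v}(\sigma)\equiv\langle\sigma_v,\omega_v\rangle+\langle\sigma_v,1_v\rangle\equiv\de_{E_2,v}(\sigma)+\langle\sigma_v,1_v\rangle\pmod 2$, which is the assertion. I expect the one genuinely delicate point to be the descent from the $G_k$-isomorphism $E_1[p]\cong E_2[p]$ to a statement about the $p$-adic characters $\varphi_{E_1,v},\psi_{E_1,v}$: one cannot work with $E_2[p]$ itself, which need not be semisimple when the Tate parameter carries a nontrivial extension class, but only with its semisimplification, and then one must lift back along a reduction map that is injective only on the prime-to-$p$ part. The hypothesis $v\nmid p$ is exactly what makes every character involved unramified of order dividing $p^2-1$, so this last step is unambiguous; for $v\mid p$ the analogous comparison would be genuinely different, but that case does not arise here since $\Sigma_0$ is chosen away from $p$.
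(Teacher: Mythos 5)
Your proof is correct and follows essentially the same route as the paper: both pass to the semisimplification of $E_1[p]\cong E_2[p]$ over $G_{F_{\cyc,v}}$ to conclude $\{\overline{\varphi}_{E_1,v},\overline{\psi}_{E_1,v}\}=\{\overline\omega_v,\overline 1_v\}$, then lift using the fact that $\varphi_{E_1,v},\psi_{E_1,v},\omega_v$ are unramified of order prime to $p$, and finally insert \eqref{3891}, \eqref{3892} into \eqref{local}. Your version merely spells out more carefully the injectivity of reduction on prime-to-$p$-order unramified characters and the harmlessness (mod $2$) of the $\varphi_{E_1,v}=\psi_{E_1,v}$ degeneration, both of which the paper treats as implicit.
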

	\begin{proof}
		In this case, $\rho_{E_1,v} \sim$ \begin{small}{$\begin{pmatrix}
				\varphi_{E_1,v} & 0 \\
				0 & \psi_{E_1,v}
				\end{pmatrix}$}\end{small} and $ \rho_{E_2,v} \sim$ 
		\begin{small}{$\begin{pmatrix}
				\omega_v & * \\
				0 & 1
				\end{pmatrix}$}\end{small}.
		Now  $E_1[p] \cong E_2[p]$ implies that $\{\overline{\varphi}_{E_1,v}, \overline{\psi}_{E_1,v}\} = \{\overline{\omega}_v, \overline{1}_v\} =\{\omega_v, 1_v\}$.
		As $\varphi_{E_1,v},
		\psi_{E_1,v}$ and  $\omega_v$ have all orders prime to $p$, we see that 
		$\{\varphi_{E_1,v}, \psi_{E_1,v}\} = \{\omega_v, 1_v\}$.
		Therefore, using  \eqref{3891} we obtain that
		$\de_{E_1,v}(\sigma)\equiv \langle \sigma_v, \omega_v \rangle 
		+ \langle \sigma_v, 1_v \rangle \pmod{2}$. On the other hand, by  \eqref{3892},
		$\de_{E_2,v}(\sigma)\equiv \langle \sigma_v, \omega_v\rangle \pmod{2}$. 
		Hence, the result follows.
	\end{proof}
	\begin{lemma} [good - non-split] \label{lem12}
		Let $v \in \Sigma_0$. Assume $E_1$ has good reduction at $v$
		and $E_2$ has non-split multiplicative reduction at $v$, then 
		$\de_{E_1,v}(\sigma) \equiv \de_{E_2,v}(\sigma) + \langle \sigma_v, \varkappa_v \rangle \pmod{2}$.
	\end{lemma}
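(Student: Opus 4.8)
The plan is to run the same argument as in the proof of Lemma~\ref{lem11}, replacing the unramified characters attached to $E_2$ at a split prime by those at a non-split prime, which differ only by the unramified quadratic twist $\varkappa_v$. First I would record the shapes of the two local representations. Since $E_1$ has good reduction at $v\nmid p$, the semisimple $G_{F_{\cyc,v}}$-representation $V_p(E_1)\otimes_{\Q_p}\mathcal F$ decomposes as $\mathcal F(\varphi_{E_1,v})\oplus\mathcal F(\psi_{E_1,v})$ with $\varphi_{E_1,v},\psi_{E_1,v}$ unramified of order dividing $p^2-1$ and $\varphi_{E_1,v}\psi_{E_1,v}=\omega_v$, and \eqref{3891} computes $\langle\rho_{E_1,v},\chi\rangle$. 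Since $E_2$ has non-split multiplicative reduction at $v$, \eqref{varkappa} shows that the semisimplification of $\rho_{E_2,v}$ is $(\omega_v\varkappa_v)\oplus\varkappa_v$, and \eqref{3892} gives $\langle\rho_{E_2,v},\chi\rangle=1$ exactly for $\chi=\omega_v\varkappa_v$.

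The key input is the congruence $E_1[p]\cong E_2[p]$, which restricts to an isomorphism of $G_{F_{\cyc,v}}$-modules and hence gives $\overline{\rho}_{E_1,v}^{\,ss}\cong\overline{\rho}_{E_2,v}^{\,ss}$; that is, $\{\overline{\varphi}_{E_1,v},\overline{\psi}_{E_1,v}\}=\{\overline{\omega}_v\overline{\varkappa}_v,\overline{\varkappa}_v\}$ as multisets of mod-$p$ characters of $G_{F_{\cyc,v}}$. Now $\varphi_{E_1,v},\psi_{E_1,v}$ have order dividing $p^2-1$, while $\omega_v\varkappa_v$ and $\varkappa_v$ have order dividing $\mathrm{lcm}(p-1,2)=p-1$ because $p$ is odd; all of these orders are prime to $p$, so reduction modulo $\pi$ is injective on characters of order prime to $p$ (Teichm\"uller lift), and the equality of mod-$p$ multisets lifts to $\{\varphi_{E_1,v},\psi_{E_1,v}\}=\{\omega_v\varkappa_v,\varkappa_v\}$ in characteristic zero.

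Finally I would substitute into \eqref{local}. Using \eqref{3891} one gets $\de_{E_1,v}(\sigma)\equiv\langle\sigma_v,\omega_v\varkappa_v\rangle+\langle\sigma_v,\varkappa_v\rangle\pmod 2$; in the degenerate case $\varphi_{E_1,v}=\psi_{E_1,v}$ the left-hand contribution is $2\langle\sigma_v,\varphi_{E_1,v}\rangle\equiv0$ and simultaneously $\omega_v=1_v$, so the right-hand side also vanishes mod $2$, and the identity still holds. Using \eqref{3892} in the non-split case, $\de_{E_2,v}(\sigma)\equiv\langle\sigma_v,\omega_v\varkappa_v\rangle\pmod 2$. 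Subtracting yields $\de_{E_1,v}(\sigma)\equiv\de_{E_2,v}(\sigma)+\langle\sigma_v,\varkappa_v\rangle\pmod 2$, which is the claim. The argument has no real obstacle; the one point needing care, exactly as in Lemma~\ref{lem11}, is verifying that every character occurring has order prime to $p$ so that the residual isomorphism pins down the characteristic-zero characters up to reordering — and for this the hypothesis that $p$ is odd is used.
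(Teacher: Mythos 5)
Your proof is correct and is exactly the expansion the authors intend: the paper says only ``The proof is similar to the good-split case in Lemma~\ref{lem11} and is omitted,'' and what you wrote out is precisely the good-split argument with $\{\omega_v,1_v\}$ replaced by $\{\omega_v\varkappa_v,\varkappa_v\}$, plus a sound check of the degenerate case $\varphi_{E_1,v}=\psi_{E_1,v}$.
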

	\begin{proof}
		The proof is similar to the good-split case in Lemma \ref{lem11} and is omitted.
	\end{proof}
	\begin{lemma} [split - non-split] \label{lem10}
		Let  $v \in  \Sigma_0$. Assume  $E_1$ has split multiplicative reduction at $v$ and $E_2$ has non-split multiplicative reduction $v$. Then $\de_{E_1,v}(\sigma) \equiv \de_{E_2,v}(\sigma) + \langle \sigma_v, 1_v \rangle + \langle \sigma_v, \varkappa_v \rangle \pmod{2}$. 
	\end{lemma}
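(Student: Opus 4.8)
The plan is to follow the good--split case (Lemma \ref{lem11}): compute each $\de_{E_i,v}(\sigma)$ from \eqref{local} and \eqref{3892}, then use the congruence $E_1[p]\cong E_2[p]$ to identify the one-dimensional constituents occurring. Since $v\in\Sigma_0$ we have $v\nmid p\infty$. Because $E_1$ has split multiplicative reduction at $v$, only $\chi=\omega_v$ contributes in \eqref{local}, so $\de_{E_1,v}(\sigma)\equiv\langle\sigma_v,\omega_v\rangle\pmod 2$; because $E_2$ has non-split multiplicative reduction at $v$, only $\chi=\omega_v\varkappa_v$ contributes, so $\de_{E_2,v}(\sigma)\equiv\langle\sigma_v,\omega_v\varkappa_v\rangle\pmod 2$.

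Next I would pin down the relation between $\omega_v,1_v$ and $\omega_v\varkappa_v,\varkappa_v$. By \eqref{varkappa}, the semisimplification of the $G_{F_{\cyc,v}}$-representation $\rho_{E_1,v}$ is $\mathcal F(\omega_v)\oplus\mathcal F(1_v)$, and that of $\rho_{E_2,v}$ is $\mathcal F(\omega_v\varkappa_v)\oplus\mathcal F(\varkappa_v)$. Reducing modulo $\pi$ and using $E_1[p]\cong E_2[p]$ as $G_k$-modules, so that $\overline\rho_{E_1,v}\cong\overline\rho_{E_2,v}$ and hence their semisimplifications agree, I obtain the equality of multisets of mod-$\pi$ characters $\{\overline\omega_v,\overline 1_v\}=\{\overline{\omega_v\varkappa_v},\overline\varkappa_v\}$. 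Since $\omega_v$ has order dividing $p-1$ and $\varkappa_v$ is quadratic, all four characters have order prime to $p$ --- here the oddness of $p$ is used, so that $\overline\varkappa_v$ stays nontrivial --- and therefore each coincides with the Teichm\"uller lift of its reduction. Hence the identity lifts to characteristic zero: $\mathcal F(\omega_v)\oplus\mathcal F(1_v)\cong\mathcal F(\omega_v\varkappa_v)\oplus\mathcal F(\varkappa_v)$ as $\mathcal F[G_{F_{\cyc,v}}]$-modules.

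Finally, applying $\langle\sigma_v,-\rangle$ to this isomorphism gives $\langle\sigma_v,\omega_v\rangle+\langle\sigma_v,1_v\rangle=\langle\sigma_v,\omega_v\varkappa_v\rangle+\langle\sigma_v,\varkappa_v\rangle$; substituting the two formulas above and absorbing a sign modulo $2$ yields exactly $\de_{E_1,v}(\sigma)\equiv\de_{E_2,v}(\sigma)+\langle\sigma_v,1_v\rangle+\langle\sigma_v,\varkappa_v\rangle\pmod 2$. I do not expect a real obstacle here; the only delicate point is the passage from the mod-$\pi$ identity of characters to the exact one, which rests on the prime-to-$p$ orders of $\omega_v$ and $\varkappa_v$. (Equivalently, the multiset equality forces $\omega_v=\varkappa_v$ and $\omega_v\varkappa_v=1_v$ --- since $\varkappa_v\neq 1_v$ --- after which $\de_{E_1,v}(\sigma)\equiv\langle\sigma_v,\varkappa_v\rangle$, $\de_{E_2,v}(\sigma)\equiv\langle\sigma_v,1_v\rangle$, and the claim is immediate from $\varkappa_v^2=1_v$.)
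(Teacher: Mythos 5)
Your proposal is correct, and the core computation is the same as the paper's: from \eqref{3892} and \eqref{local} one gets $\de_{E_1,v}(\sigma)\equiv\langle\sigma_v,\omega_v\rangle$ and $\de_{E_2,v}(\sigma)\equiv\langle\sigma_v,\omega_v\varkappa_v\rangle\pmod 2$, and the lemma then reduces to pinning down how $\omega_v$ and $\varkappa_v$ interact. Where you diverge is in how you extract that constraint: the paper simply cites \cite[Lemma 2.8]{MR3629245} to conclude that the three hypotheses (split, non-split, $E_1[p]\cong E_2[p]$) force $\omega_v=\varkappa_v$, and then invokes $\varkappa_v^2=1_v$. You instead derive the constraint self-containedly by comparing semisimplifications of the mod-$\pi$ reductions of $\rho_{E_i,v}$ (exactly the device used in the paper's Lemma \ref{lem11} for the good--split case), obtaining the multiset identity $\{\omega_v,1_v\}=\{\omega_v\varkappa_v,\varkappa_v\}$ after lifting via the prime-to-$p$ orders; applying $\langle\sigma_v,-\rangle$ then gives the result directly, and your parenthetical observation recovers $\omega_v=\varkappa_v$ as a corollary. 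Your version avoids the external citation and is arguably more uniform with the neighbouring lemmas, at the cost of a slightly longer argument; both are sound and yield the same conclusion.
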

	\begin{proof}
		By the proof of Lemma \ref{lem8}, $\de_{E_1,v}(\sigma) \equiv \langle \sigma_v, \omega_v \rangle \pmod 2$. Similarly, by Lemma \ref{lem9}, $\de_{E_2,v}(\sigma) \equiv \langle \sigma_v, \omega_v\varkappa_v \rangle \pmod 2$. Thus $\de_{E_1,v}(\sigma) \equiv \de_{E_2,v}(\sigma) + \langle \sigma_v, \omega_v \rangle + \langle \sigma_v, \omega_v\varkappa_v \rangle \pmod 2$. 
		\par \noindent By \cite[Lemma 2.8]{MR3629245}, the 3 conditions  (i) $E_1$ is split at $v$, (ii) $E_2$ is non-split at $v$ and (iii) $E_1[p] \cong E_2[p]$, are simultaneously satisfied only when $\omega_v = \varkappa_v$. As $\varkappa_v$ is quadratic, the lemma is now immediate. \end{proof}
\begin{lemma}\label{image} 
		If $E$ is potentially good at $v$, then $\rho_{E}(G_{F_{cyc,v}})$ and $\overline{\rho}_{E}(G_{F_{cyc,v}})$ are isomorphic. 
	\end{lemma}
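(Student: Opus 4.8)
The plan is to show that the natural reduction map $\GL_2(\Z_p)\twoheadrightarrow\GL_2(\F_p)$ restricts to an isomorphism from $\rho_{E}(G_{F_{\cyc,v}})$ onto $\overline{\rho}_{E}(G_{F_{\cyc,v}})$; equivalently, that $\rho_{E}(G_{F_{\cyc,v}})$ meets the kernel $1+pM_2(\Z_p)$ of $\GL_2(\Z_p)\to\GL_2(\F_p)$ only in the identity. (As in every application of this lemma, $v\nmid p$ here.) First I would recall that $\rho_{E}(G_{F_{\cyc,v}})$ is \emph{finite}. If $E$ has good reduction at $v$, this was observed in the discussion around \eqref{3891}, where $V_p(E)\otimes_{\Q_p}\mathcal F$ is a sum of two unramified characters of $G_{F_{\cyc,v}}$ of order dividing $p^2-1$; in fact, since the residue field of $F_{\cyc,v}$ has pro-(prime-to-$p$) absolute Galois group over itself, $\rho_{E}(G_{F_{\cyc,v}})$ has order prime to $p$ in this case. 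If $E$ has additive, potentially good reduction at $v$, then $\rho_{E}(G_{F_{\cyc,v}})$ is finite by \cite[Page 73]{MR2807791} (indeed it is an extension of a finite prime-to-$p$ group by the finite group $\rho_{E}(I_v)$). In either case $\rho_{E}(G_{F_{\cyc,v}})$ is a finite group.

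Next I would invoke the standard fact that, because $p$ is odd, $1+pM_2(\Z_p)$ is torsion-free. Concretely, if $g=1+p^mA$ with $m\ge 1$, $A\in M_2(\Z_p)$ and $A\not\equiv 0\pmod{p}$, then in $g^p=\sum_{k=0}^p\binom{p}{k}p^{mk}A^k$ the $k=1$ term equals $p^{m+1}A$ while every term with $2\le k\le p$ has $p$-adic valuation strictly larger than $m+1$ (using $v_p\big(\binom{p}{k}\big)\ge 1$ for $1\le k\le p-1$ together with $p\ge 3$); hence $g^p=1+p^{m+1}A'$ with $A'\equiv A\pmod{p}$, and iterating gives $g^{p^{j}}\neq 1$ for every $j\ge 0$. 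Since $1+pM_2(\Z_p)$ is pro-$p$, it therefore has no nontrivial element of finite order.

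Combining the two steps, $\rho_{E}(G_{F_{\cyc,v}})\cap\big(1+pM_2(\Z_p)\big)$ is a finite subgroup of a torsion-free group, hence trivial; so the reduction map is injective on $\rho_{E}(G_{F_{\cyc,v}})$ and carries it isomorphically onto $\overline{\rho}_{E}(G_{F_{\cyc,v}})$. The only step carrying genuine content, as opposed to a soft prime-to-$p$ argument, is the finiteness of $\rho_{E}(G_{F_{\cyc,v}})$ in the additive potentially good case when $p=3$, where the image can really have order divisible by $3$; this is precisely where \cite[Page 73]{MR2807791} is used. Otherwise the argument is formal, and I anticipate no serious obstacle.
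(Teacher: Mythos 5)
Your proof is correct, and it takes a genuinely different route from the paper's. The paper invokes the Serre--Tate criterion \cite[Corollaries 2(b) \& 3]{seta} to say that $E$ acquires good reduction over $F_{\cyc,v}(E[p])$, sets $G:=\Gal(\overline{F}_v/F_{\cyc,v}(E[p]))$, observes that $G$ acts on $V_pE$ through a direct sum of two characters of prime-to-$p$ order (unramified, because the residue field of $F_{\cyc,v}$ already has pro-prime-to-$p$ absolute Galois group), and then concludes: any $g\in G_{F_{\cyc,v}}$ killing $E[p]$ lies in $G$, its character values are $\equiv 1\pmod \pi$ and of prime-to-$p$ order, hence equal to $1$. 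Your argument instead works directly inside $\GL_2(\Z_p)$: combine the already-established finiteness of $\rho_E(G_{F_{\cyc,v}})$ with the torsion-freeness of the pro-$p$ group $1+pM_2(\Z_p)$ (valid since $p$ is odd) to force the kernel of reduction to be trivial. The two arguments are in the end close in spirit -- both are organized around the absence of nontrivial prime-to-$p$ (respectively $p$-power) torsion congruent to $1\pmod p$ -- but the paper's diagonalizes first and reasons about character values, while yours avoids any base change or semisimplification by reasoning about $2\times 2$ matrices directly. The trade-off is that your route leans on the finiteness of the image as a black box (which, as you correctly note, is cited from \cite[Page 73]{MR2807791} in the additive potentially good case and is itself a Serre--Tate-type fact), whereas the paper's proof re-derives this structure from Serre--Tate in the course of the argument and is thus slightly more self-contained. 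Both are valid; yours is arguably the cleaner presentation once finiteness is granted.
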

\begin{proof}
		By \cite[Corollaries 2(b) \& 3]{seta}, $E$ acquires good reduction over $F_{\cyc,v}(E[p])$. Set $G:= \Gal(\overline{F}_v/F_{\cyc,v}(E[p]))$. Let $g \in G_{F_{\cyc,v}}$ be an element such that $\overline{\rho}_{E,v}(g)$ acts trivially on $E[p]$. Then, $g \in G$. Moreover, $G$ acts on $V_pE$ by the sum of two characters of order prime to $p$, after a suitable base change. Hence, $g$ acts trivially on $V_p(E)$ if and only if $\overline{\rho}_{E,v}(g)$ acts trivially on $E[p]$. This proves the lemma.
	\end{proof}
\begin{lemma}[good - additive] %\cite[Lemma $2.3$]{MR3629245}\label{r1}
		Let $v \nmid p\infty$ be a prime of $F$. Then it is not possible that $E_1$ has good reduction while $E_2$ has additive reduction at $v$.  In particular, this is true for any $v \in \Sigma_0$. 
	\end{lemma}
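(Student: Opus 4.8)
The plan is to argue by contradiction. Suppose $v\nmid p\infty$ is a prime of $F$ at which $E_1$ has good reduction while $E_2$ has additive reduction. Restricting the isomorphism $E_1[p]\cong E_2[p]$ of $G_k$-modules to a decomposition group at $v$ gives $E_1[p]\cong E_2[p]$ as $G_{F_v}$-modules; moreover, since $v\nmid p$ the local extension $F_{\cyc,v}/F_v$ is unramified, so $I_{F_{\cyc,v}}=I_{F_v}$ and it is harmless to pass to the restrictions $\ov{\rho}_{E_i,v}$ to $G_{F_{\cyc,v}}$. The first step is to note that good reduction of $E_1$ at $v$, together with $v\nmid p$ and the criterion of N\'eron--Ogg--Shafarevich applied to $T_pE_1$ (whose coefficient prime differs from the residue characteristic of $v$), forces $E_1[p]$ to be unramified at $v$, i.e.\ $v\nmid\ov{N}^F_1$. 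Since $E_1[p]\cong E_2[p]$ we get $v\nmid\ov{N}^F_2$, i.e.\ $\ov{\rho}_{E_2,v}(I_v)=1$. It then remains to contradict the additive reduction of $E_2$ at $v$, and I would split into the two possibilities for an additive prime: potentially multiplicative reduction and potentially good reduction.

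In the potentially multiplicative case I would use \eqref{pmr}: $\rho_{E_2,v}$ is the twist of an upper-triangular representation with diagonal $(\omega_v,1)$ by the ramified quadratic character $\vartheta_{E_2,v}$. Since $v\nmid p$ the character $\omega_v$ is unramified, and since $p$ is odd the reduction $\ov{\vartheta}_{E_2,v}$ is again a ramified quadratic character; choosing $g\in I_v$ with $\ov{\vartheta}_{E_2,v}(g)=-1$ we get $\tr\,\ov{\rho}_{E_2,v}(g)=\ov{\vartheta}_{E_2,v}(g)\big(\ov{\omega}_v(g)+1\big)=-2\neq 2$ in $\mathfrak f$ (here $p\neq 2$), so $\ov{\rho}_{E_2,v}(g)\neq 1$, contradicting $\ov{\rho}_{E_2,v}(I_v)=1$.

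In the potentially good case I would appeal to Lemma \ref{image}, or rather to its proof, which shows that reduction mod $\pi$ identifies $\ker\ov{\rho}_{E_2,v}$ with $\ker\rho_{E_2,v}$ inside $G_{F_{\cyc,v}}$ (the point being that $\rho_{E_2}(G_{F_{\cyc,v}})$ is finite and a finite subgroup of $1+pM_2(\Z_p)$ is trivial for $p\geq 3$). Hence $I_v\subseteq\ker\ov{\rho}_{E_2,v}$ implies $I_v\subseteq\ker\rho_{E_2,v}$, i.e.\ $\rho_{E_2,v}(I_v)=\rho_{E_2}(I_{F_v})=1$, so $E_2$ has good reduction at $v$ by N\'eron--Ogg--Shafarevich, contradicting additive reduction. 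This settles both cases. The ``in particular'' clause is then immediate: every $v\in\Sigma_0$ satisfies $v\nmid p\infty$, since $E_1,E_2$ have good reduction above $p$ (so no $v\mid p$ can divide $N^F_i/\ov{N}^F_i$) and the second set in \eqref{Sigma_0} explicitly omits $v\mid p\infty$.

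I expect the only delicate point to be the bookkeeping in the potentially good case, namely checking that comparing inertia images over the cyclotomic local field $F_{\cyc,v}$ (where Lemma \ref{image} lives) is enough to deduce good reduction over $F_v$ itself; this is exactly where one uses that $F_{\cyc,v}/F_v$ is unramified, so that $I_{F_{\cyc,v}}=I_{F_v}$. Everything else reduces to elementary reduction-type bookkeeping with \eqref{varkappa}--\eqref{3894} and the oddness of $p$.
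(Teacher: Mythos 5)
Your proof is correct and follows essentially the same route as the paper's: good reduction of $E_1$ at $v\nmid p$ forces $E_1[p]$ to be unramified by N\'eron--Ogg--Shafarevich, while additive reduction of $E_2$ (via Lemma \ref{image} in the potentially good case) forces $E_2[p]$ to be ramified, contradicting $E_1[p]\cong E_2[p]$. The paper's one-line proof leaves the potentially multiplicative case implicit; your trace computation for that subcase is a correct explicit filling-in of the same idea rather than a different approach.
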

	\begin{proof}
		As $E_1$ is good at $v$, $E_1[p]$ is unramified. On the other hand, by Lemma \ref{image}, $E_2[p]$
		is ramified. Since $E_1[p] \cong E_2[p]$, the lemma follows.
	\end{proof}
	Abbreviation: We write $E$ is PMR at a prime $v$ in $F$, if $E$ has additive and potentially multiplicative reduction at $v$. By $E$ is PGA  (respectively PGNA) at a prime $v$ in $F$, we mean $E$ is additive, potentially good at $v$ with $\rho_E(G_{F_{\cyc,v}})$ abelian (respectively non-abelian).
	\begin{lemma} [multiplicative - additive] \label{Lem1}
		Let  $v \in \Sigma_0$ be a multiplicative prime of  $E_1$ and additive  prime for $E_2$. Then by \cite[Lemma $2.5$]{MR3629245}, it follows that $p=3$. Moreover, the following hold.
		\begin{enumerate}
			\item[(i)]It can not happen that $E_1$ is multiplicative  at $v$  and $E_2$ is PMR  at $v$.
			\item [(ii)]Assume that $E_1$ is split (resp. non-split) multiplicative at $v$  and $E_2$ is PGA at $v$. Then $\mu_3 \subset F_v$. If $v$ is split for $E_1$ then $\de_{E_1,v}(\sigma) \equiv \de_{E_2,v}(\sigma)+\langle \sigma_v, 1_v \rangle  \pmod{2}$ and if $v$ is non-split for $E_1$ then  $\de_{E_1,v}(\sigma) \equiv \de_{E_2,v}(\sigma)+\langle \sigma_v, \varkappa_v \rangle  \pmod{2}$.
			\item [(iii)]It is not possible that $E_1$ is multiplicative  at $v$  and  $E_2$ is PGNA  at $v$,  unless $ 3 \mid \# \rho_{E_2}(G_{F_{\cyc,v}})$ and $\mu_3 \not\subset F_v$. In the later case, $\de_{E_1,v}(\sigma)  + \langle \sigma_v, \chi_v \rangle \equiv \de_{E_2,v} (\sigma) + \langle \sigma_v, \theta_{3,v} \rangle \pmod{2} $, where $\theta_{3,v}$, is the unique $2$-dimensional irreducible representation of $\mathrm{Gal}(F_{\cyc,v}(E_2[p])/F_{\cyc,v})$  which is the dihedral group $D_6$ of order $6$ \cite[Page 165-166]{MR2807791}. Here $\chi_v = \varkappa_v$ if $E_1$ is split at $v$ and $\chi_v = 1_v$ if $E_1$ is non-split at $v$. %, then $\de_{E_1,v}(\sigma) \equiv \de_{E_2,v}(\sigma) + \langle \sigma_v, \omega_v\varkappa_v \rangle + \langle \sigma_v, \rho_{E_2,v} \rangle \pmod{2}$.
		\end{enumerate}
	\end{lemma}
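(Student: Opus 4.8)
\medskip
\noindent\textit{Proof strategy.}
The plan is to read off the reduction type of $E_2$ at $v$ from the image of the Galois module $E_1[p]$, via Lemma~\ref{image}. Note first that $v\nmid p\infty$ (since $E_1,E_2$ are good above $p$, $\Sigma_0$ contains no prime over $p$), and $p=3$ by \cite[Lemma $2.5$]{MR3629245}. Two observations are used throughout. (a) If $E_1[p]$ were unramified at $v$, then so would be $E_2[p]\cong E_1[p]$, which is impossible for the additive curve $E_2$ by Lemma~\ref{image}; hence $E_1[p]$ is ramified at $v$. As $E_1$ is multiplicative and $v\nmid p$, inertia acts on $E_1[p]$ through unipotent matrices (by \eqref{varkappa}, using that the twist $\varkappa_v$ is unramified and $\omega_v|_{I_v}=1$), so the image of $I_v$ on $E_1[p]$ is a nontrivial unipotent subgroup, necessarily cyclic of order $p=3$, and every element of $I_v$ acts on $E_1[p]$ with trace $2$. (b) Once we know $E_2$ is potentially good at $v$, Lemma~\ref{image} gives $\rho_{E_2}(G_{F_{\cyc,v}})\cong\overline\rho_{E_2,v}(G_{F_{\cyc,v}})$, and since $E_1[p]\cong E_2[p]$ this image equals that of $E_1[p]$ on $G_{F_{\cyc,v}}$ (and the same after restriction to $I_v$); this image of $E_1[p]$ we compute explicitly from \eqref{varkappa}.

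For part~(i): if $E_2$ were PMR at $v$, then by \eqref{pmr} an element $g\in I_v$ with $\vartheta_{E_2,v}(g)=-1$ — such $g$ exist, as $\vartheta_{E_2,v}$ is ramified hence nontrivial on $I_v$, and its mod-$p$ reduction still has order $2$ since $p=3$ — acts on $E_2[p]$ with trace $-2$. By (a), every element of $I_v$ acts on $E_1[p]$ with trace $2$, and $2\not\equiv-2\pmod3$ contradicts $E_1[p]\cong E_2[p]$. Hence $E_2$ is potentially good at $v$, so (b) applies.

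Next I would compute the image of $E_1[p]$ on $G_{F_{\cyc,v}}$: by (a) its restriction to $I_v$ is cyclic of order $3$ and unipotent, while Frobenius contributes an element acting by $\omega_v(\Frob)$ on the invariant line and $1$ on the quotient, up to the unramified twist $\varkappa_v(\Frob)$ in the non-split case; here $\omega_v$ is unramified of order dividing $p-1=2$ and $\omega_v=1\iff\mu_p\subset F_v$. A short computation shows that this image is abelian (indeed cyclic, of order $3$ or $6$) when $\omega_v=1$, and is the dihedral group $D_6$ of order $6$ — the order-$3$ unipotent together with an order-$2$ element inverting it — when $\omega_v\neq1$. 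Combined with (b) this gives the dichotomy: $E_2$ is PGA at $v$ iff $\rho_{E_2}(G_{F_{\cyc,v}})$ is abelian iff $\omega_v=1$ iff $\mu_3\subset F_v$; and $E_2$ is PGNA at $v$ iff $\omega_v\neq1$ iff $\mu_3\not\subset F_v$, in which case $\rho_{E_2}(G_{F_{\cyc,v}})\cong D_6$ (so $3\mid\#\rho_{E_2}(G_{F_{\cyc,v}})$) and $\rho_{E_2,v}$, being $2$-dimensional absolutely irreducible and factoring through $\Gal(F_{\cyc,v}(E_2[p])/F_{\cyc,v})\cong D_6$, must be its unique $2$-dimensional irreducible $\theta_{3,v}$. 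This establishes all the impossibility and nonvanishing claims, in particular ``$\mu_3\subset F_v$'' in (ii) and ``$3\mid\#\rho_{E_2}(G_{F_{\cyc,v}})$, $\mu_3\not\subset F_v$'' in (iii).

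The congruences then drop out of \eqref{local}. For $E_1$ split (resp.\ non-split) multiplicative, \eqref{3892} gives $\de_{E_1,v}(\sigma)\equiv\langle\sigma_v,\omega_v\rangle$ (resp.\ $\langle\sigma_v,\omega_v\varkappa_v\rangle$) $\pmod2$. In case~(ii), $\omega_v=1$, so $\de_{E_1,v}(\sigma)\equiv\langle\sigma_v,1_v\rangle$ (resp.\ $\langle\sigma_v,\varkappa_v\rangle$), while for $E_2$ PGA one has $\rho_{E_2,v}=\varepsilon\oplus\varepsilon^{-1}$ with $\varepsilon\neq\varepsilon^{-1}$ (as $\varepsilon|_{I_v}$ has order $3$), so by \eqref{3894} and \eqref{local}, $\de_{E_2,v}(\sigma)=\langle\sigma_v,\varepsilon\rangle+\langle\sigma_v,\varepsilon^{-1}\rangle=2\langle\sigma_v,\varepsilon\rangle\equiv0\pmod2$, using that $\sigma_v$ is self-dual so $\langle\sigma_v,\varepsilon\rangle=\langle\sigma_v,\varepsilon^{-1}\rangle$; this is the stated congruence. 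In case~(iii), $\omega_v=\varkappa_v$, so $\de_{E_1,v}(\sigma)\equiv\langle\sigma_v,\chi_v\rangle\pmod2$ with $\chi_v$ as in the statement, and by \eqref{3894}, \eqref{local}, $\de_{E_2,v}(\sigma)=\langle\sigma_v,\theta_{3,v}\rangle$; adding $\langle\sigma_v,\chi_v\rangle$ to the former and $\langle\sigma_v,\theta_{3,v}\rangle$ to the latter makes both sides $\equiv0\pmod2$, which is the stated congruence. The step I expect to be the main obstacle is the computation of the image of $E_1[p]$ in (b) and its matching to PGA versus PGNA: the prime $p=3$ is genuinely special, since it is precisely when $\omega_v$ can be trivial (equivalently $\mu_3\subset F_v$), which is when the configuration is non-vacuous; one must also verify that wild ramification at $v\mid2$ causes no trouble, which it does not, as everything is detected modulo $3$ and hence blind to $2$-power orders.
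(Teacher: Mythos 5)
Your proof is correct and follows essentially the same strategy as the paper's: use Lemma~\ref{image} to transfer the image of $\rho_{E_2}$ on $G_{F_{\cyc,v}}$ over to $\overline\rho_{E_1}(G_{F_{\cyc,v}})$, compute the latter explicitly from \eqref{varkappa}, identify it as abelian when $\omega_v=1$ (forcing PGA and $\mu_3\subset F_v$) and as $D_6$ when $\omega_v\neq1$ (forcing PGNA, $3\mid\#\rho_{E_2}(G_{F_{\cyc,v}})$ and $\rho_{E_2,v}\cong\theta_{3,v}$), and finish with \eqref{local}, \eqref{3892}, \eqref{3894} and self-duality of $\sigma_v$. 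The only difference worth noting is part~(i), where you compare traces of inertia elements modulo $3$ while the paper compares the (un)ramifiedness of the semisimplifications of $E_1[p]$ and $E_2[p]$; both arguments are valid and immediate.
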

\proof \begin{enumerate}
			\item [(i)] If $E_1$ has multiplicative reduction at $v$, then the semi-simplification of $E_1[p]$ is unramified. On the other hand, if $E_2$ has PMR at $v$ then the semi-simplification of $E_2[p]$ is ramified. Thus, we cannot have this case. 
			\item [(ii)]
			Using  \eqref{3892} and \eqref{3894}, $\de_{E_1,v}(\sigma) \equiv \langle \sigma_v, \omega_v \rangle \pmod{2}$ if $E_1$ is split at $v$, $\de_{E_1,v}(\sigma) \equiv \langle \sigma_v, \omega_v\varkappa_v \rangle \pmod{2}$ if $E_1$ is non-split at $v$ and $\de_{E_2,v}(\sigma) \equiv \langle \sigma_v, \varepsilon_{E_2,v} \rangle 
			+ \langle \sigma_v, \varepsilon_{E_2,v}^{-1}\omega_v \rangle \pmod{2}$.

			Suppose that   $3 \nmid \#\rho_{E_2}(G_{F_{\cyc,v}})$. Then by Lemma \ref{image}, $3 \nmid \#\bar{\rho}_{E_2}(G_{F_{\cyc,v}})$. Since $\overline{\rho}_{E_1}\mid_{I_{F_v}} \sim$ \begin{small}{$\begin{pmatrix}
					1 & * \\
					0 &1
					\end{pmatrix}$}\end{small},  we deduce that $E_1[p]$ is unramified and hence so is $E_2[p]$. This is a contradiction to the assumption that $E_2$ has  potentially good additive reduction (see \cite{seta}).  		
			\par Thus we are reduced to consider $3 \mid \#\rho_{E_2}(G_{F_{\cyc,v}})$. Then $p=3$ and we have  $\overline{\rho}_{E_1}\mid_{I_{F_v}} \sim$   \begin{small}{$\begin{pmatrix}
					1 & * \\
					0 &1
					\end{pmatrix}$}\end{small} with $* \neq 0$. Also, $\overline{\rho}_{E_1}\mid_{G_{F_{cyc,v}}} \sim$ \begin{small}{$\begin{pmatrix}
					\omega_v & * \\
					0 &1
					\end{pmatrix}$}\end{small}. This implies that $\overline\rho_{E_1}(G_{F_{\cyc,v}})$ is a non-abelian group if $\omega_v$ is non-trivial. Therefore  $\overline\rho_{E_1}(G_{F_{\cyc,v}})$ is also non-abelian. This is a contradiction. Therefore $\omega_v$ is trivial and hence we must have $\mu_3\subset F_v$.   Since $\sigma_v$ is self dual, $ \langle \sigma_v, \varepsilon_{E_2,v} \rangle= \langle \sigma_v, \varepsilon_{E_2,v}^{-1} \rangle $. Therefore $\de_{E_2,v}(\sigma) \equiv 0 \pmod 2$. This proves the claim.
			
			\item [(iii)] Here $\de_{E_1,v}(\sigma) \equiv \langle \sigma_v, \omega_v \rangle \pmod{2}$ if $E_1$ is split at $v$, $\de_{E_1,v}(\sigma) \equiv \langle \sigma_v, \omega_v\varkappa_v \rangle \pmod{2}$ if $E_1$ is non-split at $v$ and $\de_{E_2,v}(\sigma) \equiv \langle \sigma_v, \rho_{E_2,v} \rangle \pmod{2}$. Note that for $p=3$, $\omega_v =\varkappa_v$ if $\mu_3 \not\subset F_v$.
			
			If $3 \nmid \#\rho_{E_2}(G_{F_{\cyc,v}})$, then by the argument in part (ii) above,  $E_2[p]$ is unramified and we get a contradiction. So, we can assume $3  \mid \# \overline\rho_{E_2}(G_{F_{\cyc,v}})$. Further,    $\mu_3\subset F_{\cyc,v}$ is also not possible; otherwise, $\overline\rho_{E_1}(G_{F_{\cyc,v}})$ is abelian and  by Lemma \ref{image},  $\rho_{E_2}(G_{F_{\cyc,v}})$ is  abelian as well. 
			\par Therefore we can assume $3 \mid \#\rho_{E_2}(G_{F_{\cyc,v}})$ and $\mu_3\not\subset F_{\cyc,v}$. Then $\overline{\rho}_{E_1}\mid_{G_{F_{cyc,v}}}$ is of order 6 and by Lemma \ref{image}, $\rho_{E_2}(G_{F_{\cyc,v}}) \cong \overline\rho_{E_2}(G_{F_{\cyc,v}})$ is non-abelian. Consequently, $\rho_{E_2}(G_{F_{\cyc,v}})$ is isomorphic to $D_6$.  Let $\theta_{3,v}$ be the unique two dimensional irreducible representation of the dihedral group $\Gal(F_{cyc,v}(E_2[p])/F_{\cyc,v})$. We have that $\rho_{E_2,v}\sim \theta_{3,v}$. Thus the lemma follows.  \qed
\end{enumerate}
\begin{lemma} [additive - additive]\label{lem5}
		Let $v \in \Sigma_0$ be an additive prime for both $E_1$ and $E_2$. Then,		
		\begin{enumerate}
			\item[(i)]%(potentially multiplicative - potentially multiplicative
			\label{lem698}
			If $E_1, E_2$ both are PMR at $v\in \Sigma_0$, then $\de_{E_1,v}(\sigma)  \equiv \de_{E_2,v}(\sigma) \pmod{2}$ if  $\vartheta_{E_1,v}= \vartheta_{E_2,v}$, otherwise, $\de_{E_1,v}(\sigma) +  \langle \sigma_v, \vartheta_{E_1,v} \rangle \equiv \de_{E_2,v}(\sigma) + \langle \sigma_v, \omega_v \vartheta_{E_1,v} \rangle \pmod{2}$.
			\item [(ii)]
			It can not happen that $E_1$ is PMR  at $v$ while $E_2$ is PGNA  at $v$, unless $p=3$,  3 divides $\# \rho_{E_2}(G_{F_{\cyc,v}})$ and $\mu_3 \not\subset F_v$. In the later case, $\de_{E_1,v}(\sigma)  + \langle \sigma_v, \omega_v\vartheta_{E_1,v} \rangle \equiv \de_{E_2,v} (\sigma) + \langle \sigma_v, \theta_{3,v} \vartheta_{E_1,v} \rangle \pmod{2} $. Moreover,  $\mathrm{Gal}(F_{\cyc,v}(E_2[p])/F_{\cyc,v})$ is the dihedral group of order 12 and it has a unique subgroup, say  $H \cong D_6$. Then $\theta_{3,v}$ is the unique $2$-dimensional irreducible representation of $H$.
			\item [(iii)]
			If $E_1$ is PMR  at $v$ and $E_2$ is PGA at $v$, then 
			$\de_{E_1,v}(\sigma) \equiv \de_{E_2,v}(\sigma)+ \langle 
			\sigma_v, \vartheta_{E_1,v} \rangle \pmod{2}$. 
			\item [(iv)]
			If $E_1, E_2$ are both PGA at $v$, 
			then $\de_{E_1,v}(\sigma) \equiv \de_{E_2,v}(\sigma) \pmod{2}$. 
			\item [(v)]  
			It is not possible that one of $E_1, E_2$ is PGA and the other one is PGNA at $v$.
			\item [(vi)] 
			Let $E_1, E_2$ be both PGNA at $v$. Then $\de_{E_1,v}(\sigma)  \equiv \de_{E_2,v}(\sigma)  \pmod{2}$.
		\end{enumerate}
\end{lemma}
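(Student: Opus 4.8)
The whole computation is driven by the local formula \eqref{local}. Inserting into it the explicit shape of $\langle\rho_{E_i,v},\chi\rangle$ forced by the reduction type --- namely \eqref{3893} when $E_i$ is PMR at $v$, and \eqref{3894} when $E_i$ is PGA or PGNA at $v$ --- one obtains, modulo $2$,
\[
\de_{E_i,v}(\sigma)\equiv
\begin{cases}
\langle\sigma_v,\omega_v\vartheta_{E_i,v}\rangle & \text{if $E_i$ is PMR at $v$},\\[2pt]
\langle\sigma_v,\varepsilon_{E_i,v}\rangle+\langle\sigma_v,\varepsilon_{E_i,v}^{-1}\omega_v\rangle & \text{if $E_i$ is PGA at $v$},\\[2pt]
\langle\sigma_v,\rho_{E_i,v}\rangle & \text{if $E_i$ is PGNA at $v$.}
\end{cases}
\]
The plan is to run through the six reduction-type configurations (i)--(vi): for each, first decide whether it can occur, then --- when it does --- use $E_1[p]\cong E_2[p]$ (restricted to $G_{F_{\cyc,v}}$) to express the characters, or the two-dimensional representation, attached to one curve in terms of those of the other, and finally substitute back and simplify modulo $2$, repeatedly using that $\sigma_v$ is self-dual so that $\langle\sigma_v,\chi\rangle=\langle\sigma_v,\chi^\vee\rangle$ for any $\chi$.

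For the occurrence/impossibility part I would use the trichotomy of local mod-$p$ pictures: if $E$ is PMR at $v$ the semisimplification of $\overline\rho_{E,v}$ is ramified, and the image $\overline\rho_{E,v}(G_{F_{\cyc,v}})$ is non-abelian precisely when $\omega_v\ne 1$ and $p\nmid\mathrm{ord}_v(j_E)$, in which case its order is divisible by $p$; if $E$ is PGA at $v$ then by Lemma \ref{image} the image $\overline\rho_{E,v}(G_{F_{\cyc,v}})\cong\rho_E(G_{F_{\cyc,v}})$ is abelian; and if $E$ is PGNA at $v$ then, again by Lemma \ref{image}, the image is a non-abelian finite group whose order is prime to $p$ whenever $p\ge 5$. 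Since $E_1[p]\cong E_2[p]$, these invariants agree for $E_1$ and $E_2$; this yields the impossibility in (v) (abelian versus non-abelian image) at once, and shows that a PMR curve can be paired with a PGNA curve (part (ii)) only when $p=3$ --- the PGNA image being non-abelian forces the PMR image to be non-abelian, hence of order divisible by $p$, which is impossible for $p\ge 5$ on the potentially-good side.

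For the identification of characters in the surviving configurations, the key point is that $\omega_v$, $\vartheta_{E_i,v}$ and $\varepsilon_{E_i,v}$ all have order prime to $p$, so reduction modulo $\pi$ is injective on them; thus $E_1[p]\cong E_2[p]$ forces, in (i), the multiset equality $\{\omega_v\vartheta_{E_1,v},\vartheta_{E_1,v}\}=\{\omega_v\vartheta_{E_2,v},\vartheta_{E_2,v}\}$, which bifurcates into $\vartheta_{E_1,v}=\vartheta_{E_2,v}$ and the case $\vartheta_{E_2,v}=\omega_v\vartheta_{E_1,v}$ with $\omega_v^2=1$; and similarly in (iii) and (iv) it matches $\{\varepsilon_{E_2,v},\varepsilon_{E_2,v}^{-1}\omega_v\}$ with $\{\omega_v\vartheta_{E_1,v},\vartheta_{E_1,v}\}$ (resp.\ with $\{\varepsilon_{E_1,v},\varepsilon_{E_1,v}^{-1}\omega_v\}$). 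Plugging these relations into the displayed formulas and cancelling paired terms modulo $2$ (using self-duality of $\sigma_v$) then produces the congruences asserted in (i), (iii), (iv), and also (vi) once one knows $\rho_{E_1,v}\cong\rho_{E_2,v}$.

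The substantive part --- and what I expect to be the main obstacle --- is the $p=3$ analysis in (ii) (and the parallel analysis already carried out in Lemma \ref{Lem1}). Here one first shows $\mu_3\not\subset F_v$: otherwise $\omega_v$ would be trivial, forcing the PMR curve $E_1$ to have abelian local image and contradicting that the PGNA curve $E_2$ does not. Consequently $\overline\rho_{E_1,v}(G_{F_{\cyc,v}})$ has order $6$, and then $E_1[p]\cong E_2[p]$ together with Lemma \ref{image} pins down $G:=\Gal(F_{\cyc,v}(E_2[p])/F_{\cyc,v})$ as the dihedral group of order $12$. One must then separate the two irreducible two-dimensional representations of $G$: the one inflated from the order-$6$ quotient $S_3$, which is $\theta_{3,v}$, and the faithful one, which is $\theta_{3,v}$ twisted by the ramified quadratic character $\vartheta_{E_1,v}$; since $E_2$ attains all of $G$ one gets $\rho_{E_2,v}\cong\theta_{3,v}\otimes\vartheta_{E_1,v}$, and the congruence in (ii) follows. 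A further nuisance specific to $p=3$, relevant to (iii), (iv) and (vi), is that a character of exact order $3$ becomes trivial after reduction modulo $\pi$, so the injectivity used above must be supplemented by the image/abelianness constraints whenever such a character could occur; and in (vi) one invokes the uniqueness of the faithful two-dimensional irreducible representation of the (dihedral) common image to conclude $\rho_{E_1,v}\cong\rho_{E_2,v}$. Once the group-theoretic picture in (ii) is settled, the remaining cases are routine bookkeeping.
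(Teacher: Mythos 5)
Your overall strategy is the same as the paper's: express $\de_{E_i,v}(\sigma)$ via \eqref{local} together with \eqref{3893}, \eqref{3894}, then use $E_1[p]\cong E_2[p]$ plus Lemma~\ref{image} to match the local data, splitting into cases by whether $p$ divides $\#\rho_{E_i}(G_{F_{\cyc,v}})$. There are, however, a couple of places where the plan either diverges from the paper or has a real gap.

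The genuine gap is in part (vi). You conclude $\rho_{E_1,v}\cong\rho_{E_2,v}$ by ``uniqueness of the faithful two-dimensional irreducible representation of the (dihedral) common image.'' But when $p=3$ and $v\mid 2$, the finite image $\rho_{E_i}(G_{F_{\cyc,v}})$ need not be dihedral: by Serre's classification of $\rho_{E_i}(I_{F_v})\subset\SL_2(\F_3)$, one can have $\#\rho_{E_i}(I_{F_v})=24$, in which case the image is $\SL_2(\F_3)$ (if $\mu_3\subset F_v$) or $\GL_2(\F_3)$ (if $\mu_3\not\subset F_v$). Neither is dihedral, and both have several two-dimensional irreducibles, so ``uniqueness of the faithful $2$-dim irrep'' is not available. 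The paper has to treat these separately: for $\SL_2(\F_3)$ it uses that there is a unique self-dual two-dimensional irreducible, and for $\GL_2(\F_3)$ it observes from the character table that the three two-dimensional irreducibles have distinct traces mod $3$, so the mod-$p$ congruence pins down $\rho_{E_i,v}$. Your sketch also folds the ``$p\nmid\#\rho(G_{F_{\cyc,v}})$'' case into the dihedral discussion; the paper handles it by the cleaner general fact that, for a finite group of order prime to $p$, reduction mod $\pi$ is a bijection on irreducible representations.

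Beyond that gap, your routes for (ii) and (iii) are genuinely different from the paper's but should work. In (ii) and (iii) the paper twists both curves by the ramified quadratic character $\vartheta_{E_1,v}$, turning $E_1$ into a split multiplicative curve and reducing to the multiplicative--additive case already settled in Lemma~\ref{Lem1}; this neatly transfers the ``only $p=3$'' occurrence constraints and, in (ii), identifies $\rho_{E_2,v}$ as a twist of $\theta_{3,v}$ without re-deriving the group theory. You instead argue directly with the image, identifying $\Gal(F_{\cyc,v}(E_2[p])/F_{\cyc,v})\cong D_{12}$ and separating the two two-dimensional irreducibles; this is correct but reproves part of Lemma~\ref{Lem1}. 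For (iii), your multiset-matching of $\{\varepsilon_{E_2,v},\varepsilon_{E_2,v}^{-1}\omega_v\}$ with $\{\vartheta_{E_1,v},\omega_v\vartheta_{E_1,v}\}$ applies only when these characters have order prime to $p$, but the twist argument shows the surviving case always has $p=3$ and $p\mid\#\rho_{E_2}(G_{F_{\cyc,v}})$ --- precisely the regime where reduction mod $\pi$ is not injective. You flag this issue but do not close it; the paper closes it by deducing $\mu_3\subset F_{\cyc,v}$, hence $\omega_v=1_v$, and then killing $\de_{E_2,v}(\sigma)$ by self-duality of $\sigma_v$. That step should be spelled out.
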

\proof
\begin{enumerate}
			\item [(i)] 
			Using  (\ref{3893}),  $\de_{E_i,v}(\sigma)\equiv \langle \sigma_v, \omega_v\vartheta_{E_i,v} \rangle \pmod{2}$. As $E_1[p] \cong E_2[p]$ and  $\vartheta_{E_1,v}, \vartheta_{E_2,v}$ have orders prime to $p$, we deduce from \eqref{pmr}  that either  $\vartheta_{E_1,v}=\vartheta_{E_2,v}$ or $\vartheta_{E_1,v}=\omega_v\vartheta_{E_2,v}$ and the result follows. Note that  $\vartheta_{E_1,v}=\omega_v\vartheta_{E_2,v}$ is possible only when $\omega_v$ is quadratic. 
			\item [(ii)]
			Using  (\ref{3893}) and \eqref{3894}, $\de_{E_1,v}(\sigma)\equiv \langle \sigma_v, \omega_v\vartheta_{E_1,v} \rangle \pmod{2}$ and $\de_{E_2,v}(\sigma)\equiv \langle \sigma_v, \rho_{E_2,v} \rangle \pmod{2}$. Note that  $E_1':= E_1^{\vartheta_{E_1,v}}$ has split multiplicative reduction at $v$.  Also, since $\rho_{E_2,v}$ is absolutely irreducible, $E_2':=E_2^{\vartheta_{E_1,v}}$ is PGNA at $v$. Also, $E_1'[p] \cong E_2'[p]$.
			
			First suppose that $p \nmid \#\rho_{E_2}(G_{F_{\cyc,v}})$. Since $\vartheta_{E_1,v}$ is quadratic and $p$ is odd, it follows that $p \nmid \#\rho_{E_2'}(G_{F_{\cyc,v}})$. Now, by an argument similar to   Lemma \ref{Lem1}(iii), it is not possible that $E_1'$ is split at $v$, $E_2'$ is potentially good with $\rho_{E_2'}(G_{F_{\cyc,v}})$ is non-abelian and $E_1'[p] \cong E_2'[p]$. 
			
			Hence,  $p$ must divide $\#\rho_{E_2}(G_{F_{\cyc,v}})$. By the discussion after \eqref{3893}, this can happen only when $p=3$. Also, as explained above, $p \mid \#\rho_{E_2'}(G_{F_{\cyc,v}})$. If $\mu_3 \subset F_{v}$, then $\rho_{E_2'}(G_{F_{\cyc,v}})$ is abelian (see Lemma \ref{Lem1}(iii)), which is not possible.  So we can assume $\mu_3 \not \subset F_{v}$. 
			By Lemma \ref{image}, $3 \mid \#\overline{\rho}_{E_2'}(G_{F_{\cyc,v}})=  \#\overline{\rho}_{E_1'}(G_{F_{\cyc,v}})$. Using these facts in the expression $\overline{\rho}_{E_1'}\mid_{G_{F_{\cyc,v}}} \sim \begin{pmatrix}
			\omega_v & * \\
			0 &1
			\end{pmatrix}$, we obtain that $\#\overline{\rho}_{E_1'}({G_{F_{\cyc,v}}}) = 6= \#\overline{\rho}_{E_2'}({G_{F_{\cyc,v}}})$. Thus, by Lemma \ref{image}, $\rho_{E_2'}(G_{F_{\cyc,v}})$ is non-abelian of order $6$. It follows that $\rho_{E_2',v} \cong \theta_{3,v}$ and $\rho_{E_2,v} \cong \theta_{3,v}\vartheta_{E_1,v}$. The result follows.

			\item [(iii)]
			Using \eqref{3893} and \eqref{3894},  $\de_{E_1,v}(\sigma)\equiv \langle \sigma_v, \omega_v\vartheta_{E_1,v} \rangle \pmod{2}$ and $\de_{E_2,v}(\sigma) \equiv \langle \sigma_v, \varepsilon_{E_2,v} \rangle + \langle \sigma_v, \omega_v\varepsilon_{E_2,v}^{-1} \rangle \pmod{2}$. 
			Set $E_1':= E_1^{\vartheta_{E_1,v}}$ and $E_2':= E_2^{\vartheta_{E_1,v}}$. Then $E_1'$ is split multiplicative at $v$, $E_2'$ is PGA at $v$ and $E_1'[p] \cong E_2'[p]$.
			
			If possible, let $p \nmid \# \rho_{E_2}(G_{F_{\cyc,v}})$. Then as explained in part(ii), $p \nmid \#\rho_{E_2'}(G_{F_{\cyc,v}})$. Now, since $E_1'[p] \cong E_2'[p]$, this is not possible by the proof of Lemma \ref{Lem1}(ii). 
			
			Thus, $p$ must divide $\# \rho_{E_2}(G_{F_{\cyc,v}})$ and hence $p \mid \#\rho_{E_2'}(G_{F_{\cyc,v}})$. This can only happen at $p=3$ (see the discussion after \eqref{3893}). Further,  using  Lemma \ref{Lem1}(ii), $\mu_3 \subset F_{\cyc,v}$ and  $\de_{E_2,v}(\sigma) \equiv 0 \pmod{2}$. So we arrive at the result.

			\item [(iv)]
			Using \eqref{3894},  $\de_{E_i,v}(\sigma)\equiv \langle \sigma_v, \varepsilon_{E_i,v} \rangle + \langle \sigma_v, \omega_v\varepsilon^{-1}_{E_i,v} \rangle \pmod{2}$.
			When  $p \nmid \#\rho_{E_1}(G_{F_{\cyc,v}})\#\rho_{E_2}(G_{F_{\cyc,v}})$, the proof is similar to Lemma \ref{lem4} and is omitted. 
			
			Assume that  $p$ divides the order of $\rho_{E_i}(G_{F_{\cyc,v}})$ for either $i=1$ or $i=2$. Then we necessarily have $p=3$. Also for that $i$, $\varepsilon_{E_i,v}$  has order divisible by $3$. Then $F_{\cyc,v}$ will have a cyclic $p$-extension with ramification index divisible by $3$ and  by local class field theory, $\mu_3 \subset F_{\cyc,v}$. As a result, $\omega_v =1_v$. Since $\sigma_v$ is self-dual,  $\de_{E_2,v}(\sigma) \equiv \langle \sigma_v, \varepsilon_{E_2,v} \rangle + \langle \sigma_v, \varepsilon_{E_2,v}^{-1}  \rangle \equiv 0 \pmod{2}$. 
			\item [(v)]By Lemma \ref{image}, $\overline{\rho}_{E_1}(G_{F_{\cyc,v}})$ is abelian and $\overline{\rho}_{E_2}(G_{F_{\cyc,v}})$ is non-abelian but they are isomorphic. This is not possible. 
			\item [(vi)]  Again recall, $\rho_{E_i}(G_{F_{\cyc,v}}) \cong \overline{\rho}_{E_i}(G_{F_{\cyc,v}})$ by Lemma \ref{image}. Note that $\rho_{E_i,v}$ is absolutely irreducible. From \cite[Page 312]{ser}, the image $\rho_{E_i}(I_{F_v})$ is 
			\begin{itemize}
				\item 
				a cyclic group of order $2, 3, 4$ or $6$, if $v \nmid 6$ 
				\item 
				a cyclic group of order $2, 3, 4, 6$ or a semi-direct product $\Z/3\Z \rtimes \Z/4\Z$, if $v \mid 3$
				\item 
				a subgroup of $\mathrm{SL}_2(\mathbb{F}_3)$ of order $2, 3, 4, 6, 8$ or $24$, if $v \mid 2$.
			\end{itemize}
			\par First consider the case that  $p \nmid \#\rho(G_{F_{\cyc,v}}) $. When $p \nmid \#\rho(G_{F_{\cyc,v}}) $, the reduction mod $p$ induces a bijection between the set of irreducible representations $\rho$ of $G_{F_{\cyc,v}}$  and  the set of all irreducible representations $\overline{\rho}$ of $G_{F_{\cyc,v}}$ (see for instance \cite[P. 85-86]{MR1984740}). Using this fact together with $\overline{\rho}_{E_1,v} \cong \overline{\rho}_{E_2,v}$, we deduce that $\rho_{E_1,v} \cong \rho_{E_2,v}$.  Now from \eqref{3894},  $\de_{E_i,v}(\sigma)\equiv \langle \sigma_v, \rho_{E_i,v} \rangle \pmod{2}$. Therefore, $\de_{E_1,v}(\sigma) \equiv \de_{E_2,v}(\sigma) \pmod{2}$. 
			\par In the second case, when $p \mid \#\rho(G_{F_{\cyc,v}}) $, again recall that $p=3$ and hence $v\nmid 3$. Thus the possibilities we have that $v \mid 2$ and $\# \rho_{E_i}(I_{F_v}) \in \{3, 6, 24 \}$ or $v \nmid 6$ and $\# \rho_{E_i}(I_{F_v}) \in \{3, 6\}$. 
			
			\par Let us consider the subcase when $\# \rho_{E_i}(I_{F_v})=3$ for $i=1,2$.  As $\rho_{E_i}(I_{F_v})$ is cyclic of order $3$ and the image of $\rho_{E_i,v}$ is non-abelian, $\rho_{E_i}(G_{F_{\cyc,v}}) \cong \rho_{E_i}(I_{F_{v}}) \rtimes \Z/2\Z   \cong D_6$ (see \cite[Page 153]{MR1260960}). Since there is a unique $2$-dimensional irreducible representation of $D_6$, we deduce $\rho_{E_1,v} \cong \rho_{E_2,v}$. Therefore, $\de_{E_1,v}(\sigma) \equiv \de_{E_2,v}(\sigma) \pmod{2}$. 
			
			Next subcase is when $\# \rho_{E_i}(I_{F_v})=6$ for $i=1,2$. As  $\mathrm{SL}_2(\mathbb F_3)$ has no subgroup isomorphic to $D_6$, $\rho_{E_i}(I_{F_v}) \cong \Z/6\Z$. Once again, by using arguments of \cite[Page 153]{MR1260960}, we obtain the structure $\rho_{E_i}(G_{F_{\cyc,v}}) \cong \rho_{E_i}(I_{F_{v}}) \rtimes \Z/2\Z \cong D_{12}$. Since there is only one faithful $2$-dimensional irreducible representation of $D_{12}$, as observed from the character table of $D_{12}$, we conclude that $\rho_{E_1,v} \cong \rho_{E_2,v}$. Hence, $\de_{E_1,v}(\sigma) \equiv \de_{E_2,v}(\sigma) \pmod{2}$. 
			
			Finally, we are left to deal with the subcase  where $p=3$ and $ \rho_{E_i}(I_{F_v}) \cong \mathrm{SL}_2(\mathbb F_3)$ of order  24. Then $v \mid 2$. Also notice that, as Weil pairing is non-degenerate, $\mu_3\subset F_v$ if and only if $\rho_{E_i}(G_{F_{\cyc,v}}) \subset \mathrm{SL}_2(\mathbb F_3)$.  Hence if $\mu_3 \subset F_v$, then $\rho_{E_i,v}$ is self-dual and $\rho_{E_i}(G_{F_{\cyc,v}}) \cong \SL_2(\mathbb{F}_3)$ for $i=1,2$.   From the character table,  there is only one  $2$-dimensional irreducible representation self dual representation  of $\SL_2(\mathbb{F}_3)$. Hence, $\rho_{E_i,v}$ is uniquely determined and we deduce $\rho_{E_1,v} \cong \rho_{E_2,v}$.  
			On the other hand, if $\mu_3 \not \subset F_v$, then $\rho_{E_i}(G_{F_{\cyc,v}})  \cong \GL_2(\mathbb{F}_3)$ for $i=1,2$. 
			From the character table, $\GL_2(\mathbb{F}_3)$ has three  $2$-dimensional irreducible representations and further, all three of these 2 dimensional representations have distinct traces $\pmod 3$.  Since $\overline{\rho}_{E_1}|_{G_{F_{\cyc,v}}} \cong \overline{\rho}_{E_2}|_{G_{F_{\cyc,v}}}$, we deduce $\rho_{E_1,v} \cong \rho_{E_2,v}$. So $\de_{E_1,v}(\sigma) \equiv \de_{E_2,v}(\sigma) \pmod{2}$. \qed 
		\end{enumerate} 
	\subsection{The main theorem on the algebraic side}\label{mainspei}
	Let $B$ be the set of primes of $F$, such that  both $E_1$ and $E_2$ have additive reduction and at least one of them has PMR. At each $v\in B$, we define a quadratic character $\vartheta_v$:  if $E_1$ (respectively $E_2$) has PMR at $v$ and $E_2$ (respectively $E_1$) has potentially good reduction then put $\vartheta_v:= \vartheta_{E_1,v}$ (respectively $\vartheta_{E_2,v}$).  When both $E_1$ and $E_2$ have PMR at $v$, put $\vartheta_v:=\vartheta_{E_1,v}$. Note, in this last case by Lemma  \ref{lem5}(i),  either $\vartheta_{E_1,v}=\vartheta_{E_2,v}$ or  $\vartheta_{E_1,v}=\omega_v\vartheta_{E_2,v}$ with $\omega_v$ quadratic. The table below summarises the cases where $\de_{E_1,v}(\sigma)$ and $\de_{E_2,v}(\sigma)$ can have different parity. {{}In the following table, we use the notation $a \equiv_2 b$ to denote $a \equiv b \pmod 2$}. {\small
		\begin{table}[htbp]
			\centering
			\begin{tabular}{ |l|l|l| }
				\hline
				${\bf p, \,\,\, v \nmid p}$ & {\bf \ \ \ \ \ \ \ \ \ \ \  \ Reduction type at $v$} & {\bf \ \ \ \ \ \ \ \ \ \ \ \ \ \ \ Parity} \\ 
				\hline
				\multirow{2}{*}{$p \geq 3$} &
				\multirow{2}{*}{$E_1$ is good and  $E_2$ is split} & 
				\multirow{2}{*}{$\de_{E_1,v}(\sigma) \equiv_2 \de_{E_2,v}(\sigma)+\langle \sigma_v, 1_v \rangle$} \\ & {} & {}\\
				\hline
				\multirow{2}{*}{$p \geq 3$} &
				\multirow{2}{*}{$E_1$ is good and  $E_2$ is non-split} & 
				\multirow{2}{*}{$\de_{E_1,v}(\sigma) \equiv_2 \de_{E_2,v}(\sigma)+\langle \sigma_v, \varkappa_v \rangle$} \\ & {} & {}\\
				\hline
				\multirow{2}{*}{$p \geq 3$} &
				\multirow{2}{*}{$E_1$ is split and $E_2$ is non-split} & 
				\multirow{2}{*}{$\de_{E_1,v}(\sigma) \equiv_2 \de_{E_2,v}(\sigma)+\langle \sigma_v, 1_v \rangle + \langle \sigma_v, \varkappa_v \rangle $} \\ & {} & {}\\
				\hline
				\multirow{2}{*}{$p \geq 3$} &
				\multirow{2}{*}{$E_1, E_2$ are PMR and $\vartheta_{E_1,v} \neq \vartheta_{E_2,v}$} & 
				\multirow{2}{*}{$\de_{E_1,v}(\sigma) +\langle \sigma_v, \vartheta_{E_1,v} \rangle \equiv_2 \de_{E_2,v}(\sigma)+ \langle \sigma_v, \omega_v\vartheta_{E_1,v}\rangle $} \\ & {} & {}\\
				\hline
				\multirow{2}{*}{$p \geq 3$} &
				\multirow{2}{*}{$E_1$ is PMR and  $E_2$ is PGA}& 
				\multirow{2}{*}{$\de_{E_1,v}(\sigma) \equiv_2 \de_{E_2,v}(\sigma)+\langle \sigma_v, \vartheta_{v} \rangle $} \\ & {} & {}\\
				\hline
				\multirow{2}{*}{$p=3, \,\, \mu_3 \subset F_v$} &
				\multirow{2}{*}{$E_1$ is split and  $E_2$ is PGA}& 
				\multirow{2}{*}{$\de_{E_1,v}(\sigma) \equiv_2 \de_{E_2,v}(\sigma)+\langle \sigma_v, 1_v \rangle$} \\ & {} & {}\\
				\hline
				\multirow{2}{*}{$p=3, \,\, \mu_3 \subset F_v$} &
				\multirow{2}{*}{$E_1$ is non-split and  $E_2$ is PGA}& 
				\multirow{2}{*}{$\de_{E_1,v}(\sigma) \equiv_2 \de_{E_2,v}(\sigma)+\langle \sigma_v, \varkappa_v \rangle$} \\ & {} & {}\\
				\hline
				\multirow{2}{*}{$p=3, \,\,\mu_3 \not\subset F_v$} &
				\multirow{2}{*}{$E_1$ is split and  $E_2$ is PGNA}& 
				\multirow{2}{*}{$\de_{E_1,v}(\sigma)+\langle \sigma_v, \varkappa_v \rangle \equiv_2 \de_{E_2,v}(\sigma)+ \langle \sigma_v, \theta_{3,v} \rangle$} \\ & {} & {}\\
				\hline
				\multirow{2}{*}{$p=3, \,\,\mu_3 \not\subset F_v$} &
				\multirow{2}{*}{$E_1$ is non-split and  $E_2$ is PGNA}& 
				\multirow{2}{*}{$\de_{E_1,v}(\sigma)+\langle \sigma_v, 1_v \rangle \equiv_2 \de_{E_2,v}(\sigma)+ \langle \sigma_v, \theta_{3,v} \rangle $} \\ & {} & {}\\
				\hline
				\multirow{2}{*}{$p=3, \,\,\mu_3 \not\subset F_v$} &
				\multirow{2}{*}{$E_1$ is PMR and  $E_2$ is PGNA} & 
				\multirow{2}{*}{$\de_{E_1,v}(\sigma) + \langle \sigma_v, \omega_v\vartheta_{E_1,v} \rangle \equiv_2 \de_{E_2,v}(\sigma)+\langle \sigma_v, \theta_{3,v}\vartheta_{E_1,v} \rangle$} \\ & {} & {} \\
				\hline 
			\end{tabular}
			\caption{Parity of $\de_{E_i,v}(\sigma)$}
			\label{table2}
		\end{table}
	}
	\begin{definition}\label{exp-prime}
		Let $S_{i}, N_{i}$ respectively  denote the set of primes $v \in \Sigma_0$ such that the elliptic curve $E_i/F$  has  split multiplicative, respectively non-split multiplicative reduction at $v$. Let $\Sigma_0'$ denote the set the primes in $\Sigma_0$ where $\sigma$ is ramified.  %and additive with potentially multiplicative  
		\par $W = \{ v\in \Sigma_0' : \mu_p\not\subset F_v,  \text{both $E_1, E_2$ are PMR   at $v$  and $\vartheta_{E_1,v}\neq \vartheta_{E_2,v}$}\}$.
		\par $X=  \{ v \in \Sigma_0': \text{one of $E_1,E_2$ is PMR  at $v$ and the other is PGA at $v$} \}.$
		\par $Y_3=  \{v \in \Sigma_0: p=3, \mu_3\not\subset F_v,  \text{one of $E_1,E_2$ is  multiplicative  at $v$ and the other  is PGNA at $v$} \}$.% if $p=3$; otherwise $Y$ is empty.
		\par $Z_3=  \{v \in \Sigma_0' : p=3, \mu_3\not\subset F_v,  \text{one of $E_1,E_2$ is  PMR  at $v$ and the other  is  PGNA at $v$} \}. $% if $p=3$; otherwise $Z$ is impty.
	\end{definition}
	Using Corollary \ref{col8} and combining  Lemmas \ref{lem4} to \ref{Lem1}, we obtain the following theorem:
	\begin{theorem} \label{thm1}
		Let $p$ be an odd prime and $k$ be a number field such that all primes of $k$ lying above $p$ are ramified in $k(\mu_p)/k$. Let $E_1, E_2$ be two elliptic curves over $k$ with $E_1[p] \cong E_2[p]$ as $G_k$-modules. Let $\sigma$ be an absolutely irreducible self-dual orthogonal representation of $\De$. Assume the hypothesis {\bf (H)}. Then 
		\begin{small}{\begin{eqnarray*}
					&&s_p(E_1, \sigma) + \sum_{v \in S_1} \langle \sigma_v, 1_v \rangle 
					+
					\sum_{v \in N_1}^{} \langle \sigma_v, \varkappa_v \rangle  + \sum_{v\in W} \big( \langle \sigma_v, \vartheta_v \rangle +  \langle \sigma_v,  \omega_v\vartheta_v \rangle \big)  + \sum_{v \in Z_3} \big( \langle \sigma_v, \omega_v\vartheta_{v} \rangle +  \langle \sigma_v, \vartheta_{v}\theta_{3,v} \rangle \big)\\
					& & \equiv s_p(E_2,\sigma) + \sum_{v \in S_2} \langle \sigma_v, 1_v \rangle 
					+
					\sum_{v \in N_2}^{} \langle \sigma_v, \varkappa_v \rangle +
					\sum_{v \in X} \langle \sigma_v, \vartheta_{v} \rangle
					+ \sum_{v \in Y_3}  \big(   \langle \sigma_v,   1_v \rangle  + \langle \sigma_v,   \varkappa_v \rangle +  \langle \sigma_v,   \theta_{3,v}\rangle\big)
					\pmod{2}. \qed 
		\end{eqnarray*}}\end{small}
	\end{theorem}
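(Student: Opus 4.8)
The proof will combine the global congruence of Corollary~\ref{col8} with the local computations carried out in Subsection~\ref{sub1}. Recall that Corollary~\ref{col8} gives
\[
s_p(E_1,\sigma)+\sum_{v\in\Sigma_0}\de_{E_1,v}(\sigma)\equiv s_p(E_2,\sigma)+\sum_{v\in\Sigma_0}\de_{E_2,v}(\sigma)\pmod{2},
\]
so, working modulo $2$ throughout, it is enough to evaluate $\de_{E_1,v}(\sigma)-\de_{E_2,v}(\sigma)\pmod 2$ for each $v\in\Sigma_0$ and then reorganize the resulting sum; this is the reduction already flagged in Remark~\ref{reductiontolocalstpes}, and each local difference is computed from the formula $\de_{E,v}(\sigma)=\sum_{\chi}\langle\sigma_v,\chi\rangle\langle\rho_{E,v},\chi\rangle$ of \eqref{local}.

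First I would run through the finitely many possibilities for the pair of reduction types of $E_1$ and $E_2$ at a prime $v\in\Sigma_0$ (good, split multiplicative, non-split multiplicative, PMR, PGA, PGNA). Because $E_1[p]\cong E_2[p]$ and $v\nmid p$, Lemma~\ref{image} together with the good--additive lemma rules out several combinations outright, while Lemmas~\ref{Lem1} and~\ref{lem5} dispose of further ones (forcing, for instance, $p=3$ in the multiplicative--additive and PMR--PGNA cases). For each of the remaining possibilities I would read off $\de_{E_1,v}(\sigma)-\de_{E_2,v}(\sigma)\pmod 2$ from the appropriate lemma of Subsection~\ref{sub1}: Lemmas~\ref{lem4},~\ref{lem8},~\ref{lem9} give $0$; Lemmas~\ref{lem11},~\ref{lem12},~\ref{lem10} give the ``$\langle\sigma_v,1_v\rangle$'', ``$\langle\sigma_v,\varkappa_v\rangle$'', ``$\langle\sigma_v,1_v\rangle+\langle\sigma_v,\varkappa_v\rangle$'' type corrections; Lemma~\ref{lem5} handles the additive--additive cases (giving $\langle\sigma_v,\vartheta_v\rangle+\langle\sigma_v,\omega_v\vartheta_v\rangle$ when both curves are PMR with $\vartheta_{E_1,v}\neq\vartheta_{E_2,v}$, and $\langle\sigma_v,\vartheta_v\rangle$ when one is PMR and the other PGA); and Lemma~\ref{Lem1}(ii) and~(iii) and Lemma~\ref{lem5}(ii) handle the remaining $p=3$ situations. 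All of this is exactly the content of Table~\ref{table2}, whose rows list precisely the reduction-type pairs for which the local difference is possibly nonzero.

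The second step is the bookkeeping: sum these local differences over $\Sigma_0$ and observe that they reorganize into the sums indexed by $S_i$, $N_i$, $W$, $X$, $Y_3$, $Z_3$ of Definition~\ref{exp-prime}. The split and non-split multiplicative primes of $E_i$ contribute $\sum_{v\in S_i}\langle\sigma_v,1_v\rangle$ and $\sum_{v\in N_i}\langle\sigma_v,\varkappa_v\rangle$ (the primes lying in $S_1\cap S_2$ or $N_1\cap N_2$ contributing $0\pmod 2$, consistently with Lemmas~\ref{lem8} and~\ref{lem9}); the primes where both curves are PMR with distinct $\vartheta$ give the sum over $W$; the primes where one curve is PMR and the other has potentially good reduction give the sums over $X$ and $Z_3$; and the primes where one curve is multiplicative and the other PGNA give the sum over $Y_3$. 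Three inputs make the reorganization work: (a) $\langle\sigma_v,\chi\rangle=0$ whenever $\chi$ is ramified and $\sigma_v$ is unramified, so the ramified-character sums may be restricted to $\Sigma_0'$; (b) the self-duality of $\sigma_v$, which forces the PGA contributions $\langle\sigma_v,\varepsilon_{E,v}\rangle+\langle\sigma_v,\omega_v\varepsilon_{E,v}^{-1}\rangle$ to vanish modulo $2$ in the relevant cases (the mechanism already used in Lemma~\ref{Lem1}(ii)); and (c) Lemma~\ref{lem5}(i), which lets the curve-dependent characters $\vartheta_{E_i,v}$ be replaced by the single well-defined character $\vartheta_v$ of Definition~\ref{exp-prime}. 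Since everything is a congruence modulo $2$, terms may be moved freely across it, and collecting them yields the stated identity.

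The main obstacle is not any single estimate but ensuring the case analysis is simultaneously exhaustive and non-overlapping — especially in the $p=3$ situations, where $\omega_v$ may coincide with $\varkappa_v$, where whether $\mu_3\subset F_v$ changes the structure, and where $\rho_E(G_{F_{\cyc,v}})$ can be dihedral of order $6$ or $12$, or isomorphic to $\SL_2(\F_3)$ or $\GL_2(\F_3)$. This is precisely what Lemmas~\ref{lem4}--\ref{lem5} have been arranged to cover, so with those lemmas and Corollary~\ref{col8} in hand the theorem follows by assembling the local contributions as above.
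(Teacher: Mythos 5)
Your proposal is correct and takes essentially the same route as the paper: the paper's proof of Theorem~\ref{thm1} is exactly the one-line assertion that Corollary~\ref{col8} together with the case-by-case computation of $\de_{E_1,v}(\sigma)-\de_{E_2,v}(\sigma)\pmod 2$ in Lemmas~\ref{lem4}--\ref{lem5} (summarized in Table~\ref{table2}) yields the stated congruence after reindexing by the sets of Definition~\ref{exp-prime}. Your write-up just spells out that assembly, including the three bookkeeping observations (restriction of ramified-character terms to $\Sigma_0'$, vanishing of PGA contributions by self-duality, and the normalization $\vartheta_v$ from Lemma~\ref{lem5}(i)), all of which match what is implicit in the paper.
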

	\begin{cor}\label{lascorr}
		We keep the hypotheses and setting of Theorem \ref{thm1}. In addition, whenever $v$ is a prime of   additive reduction for at least one of $E_1, E_2$,  we assume that $\mu_p \subset F_v$. Then
		%either (i) $p \geq 5$ or (ii) $p=3 $ and $\mu_3 \subset F$, then \subset F_v$ for $v \in A_1 \cup A_2 \cup B_1 \cup B_2$ holds, then 
		\begin{small}{\begin{eqnarray*}
					s_p(E_1, \sigma) + \sum_{v \in S_1} \langle \sigma_v, 1_v \rangle 
					+
					\sum_{v \in N_1}^{} \langle \sigma_v, \varkappa_v \rangle 
					\equiv  s_p(E_2,\sigma) + \sum_{v \in S_2}^{} \langle \sigma_v, 1_v \rangle 
					+
					\sum_{v \in N_2} \langle \sigma_v, \varkappa_v \rangle 
					+
					\sum_{v \in X} \langle \sigma_v, \vartheta_v \rangle
					\pmod{2}. \qed 
		\end{eqnarray*}}\end{small}
	\end{cor}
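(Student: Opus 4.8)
The plan is to obtain Corollary \ref{lascorr} as a direct specialization of Theorem \ref{thm1}: the extra hypothesis will force the three ``exceptional'' index sets $W$, $Y_3$ and $Z_3$ occurring on both sides of the congruence of Theorem \ref{thm1} to be empty, and once they are removed the congruence is precisely the asserted one.

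First I would return to Definition \ref{exp-prime} and read off the reduction constraints imposed in each of these sets. Every $v\in W$ has both $E_1$ and $E_2$ of type PMR at $v$, hence both have additive reduction there. Every $v\in Z_3$ has one of $E_1,E_2$ of type PMR and the other of type PGNA at $v$, and both PMR and PGNA are additive reduction types (by the abbreviations fixed just before Lemma \ref{Lem1}); so again both curves are additive at $v$. Finally every $v\in Y_3$ has one of the curves multiplicative and the other PGNA, so at least one of $E_1,E_2$ has additive reduction at $v$. Thus in all three cases $v$ is a prime of additive reduction for at least one of $E_1,E_2$, so the hypothesis of the corollary gives $\mu_p\subset F_v$. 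On the other hand the very definitions of $W$, $Y_3$, $Z_3$ require $\mu_p\not\subset F_v$ (here one uses that $Y_3$ and $Z_3$ can be non-empty only when $p=3$, in which case $\mu_p=\mu_3$). This contradiction yields $W=Y_3=Z_3=\emptyset$.

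With these sets empty, each of the sums indexed by $W$, $Y_3$ and $Z_3$ in Theorem \ref{thm1} is an empty sum and hence congruent to $0\pmod 2$; deleting them from both sides of the congruence of Theorem \ref{thm1} leaves exactly the congruence claimed in Corollary \ref{lascorr}. I do not anticipate any genuine difficulty here: the only point meriting care is verifying that the labels PMR and PGNA really do fall under ``additive reduction'', so that the hypothesis of the corollary applies at every prime occurring in $W$, $Y_3$ and $Z_3$, and this is immediate from the definitions recalled above.
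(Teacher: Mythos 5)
Your proof is correct and is exactly the argument the paper leaves implicit (the corollary is stated with \qed and no written proof): the extra hypothesis forces $W=Y_3=Z_3=\emptyset$ because each of those sets requires $\mu_p\not\subset F_v$ (with $p=3$, so $\mu_p=\mu_3$ in the cases of $Y_3,Z_3$) at a prime where at least one curve is additive, contradicting the new assumption. Deleting the corresponding empty sums from the congruence in Theorem \ref{thm1} gives precisely the stated congruence.
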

	\begin{remark} \label{equal}
		From the definitions in \eqref{varkappa} and \eqref{pmr}, it is clear that $\varkappa_v$ and $\vartheta_v$ can be considered as characters of $G_{F_{v}}$ as well. Now, let $E_1$ is split mult. at $v$ and $E_2$ is PGNA at $v$. Then, necessarily $p=3$ and $\Gal(F_{\cyc,v}(E_1[p])/F_{\cyc,v}) \cong D_6$ (see Lemma \ref{Lem1}(iii)). Further, $\overline\rho_{E_1}|_{G_{F_v}} \sim {\small \begin{pmatrix}
			\chi_1 & * \\
			0 &\chi_2
			\end{pmatrix}} $  with $\{ \chi_1,\chi_2 \}=\{ \omega_v, 1_v \}$ and $\omega_v\neq 1_v$. Hence,  $\Gal(F_v(E_1[p])/F_v)$ is also  the non-abelian of order $6$, i.e., $\Gal(F_v(E_1[p])/F_v) \cong \Gal(F_{\cyc,v}(E_1[p])/F_{\cyc,v})$ and  $F_v(E_1[p]) \cap F_{\cyc,v}=F_v$. In fact, $\Gal(F_{\cyc,v}(E_1[p])/F_v) \cong \Gal(F_{\cyc,v}(E_1[p])/F_{\cyc,v}) \times \Gal(F_{\cyc,v}/F_v)$. Thus we may as well  consider  $\theta_{3,v}$ to be the unique irreducible representation of  degree two of $\Gal(F_v(E_1[p])/F_v)\cong \Gal(F_v(E_2[p])/F_v)$. %Similarly, every $\tau_v \in C=\{\varkappa_v,1_v, \vartheta_v, \vartheta_v\theta_{3,v},\theta_{3,v} \}$  can be considered as a representation of $G_{F_v}$.
		
		\par Let $\tau_v$ be a sub-representation of $\sigma |_{G_{F_v}}$ such that $\tau_v|_{G_{F_{\cyc,v}}}$ is equivalent to $\theta_{3,v}|_{G_{F_{\cyc,v}}}$. Then $\tau_v|_{G_{F_v}}\sim \theta_{3,v}|_{G_{F_v}}\otimes \alpha$  for a  character $\alpha$ of $\Gal(F_{\cyc,v}/F_v)$ of $p$-power order. Since $\sigma |_{G_{F_v}}$ is self-dual and $p$ is odd,  $\langle \sigma |_{G_{F_v}},\theta_{3,v}|_{G_{F_v}}\otimes \alpha,\rangle= \langle \sigma |_{G_{F_v}}, \theta_{3,v}|_{G_{F_v}}\otimes \alpha^{-1},\rangle$. This shows that $\langle \sigma |_{G_{F_v}},\theta_{3,v}|_{G_{F_v}}\rangle =\langle \sigma |_{G_{F_{\cyc,v}}},\theta_{3,v}|_{G_{F_{\cyc,v}}}\rangle \pmod 2$.  A similar argument works for every Artin representations $\tau_v$ in the set $C=\{\varkappa_v,1_v, \vartheta_v, \vartheta_v\theta_{3,v},\theta_{3,v} \}$. Thus the multiplicities $<\sigma_v, \dagger>, \dagger \in C$ in Theorem \ref{thm1}  can be considered over $G_{F_v}$ as well.
		\par \noindent This observation will be used in \S \ref{s7} to compare these multiplicities  with the local root numbers over $F_v$.
	\end{remark}%
	
	\section{Relation with the root numbers}\label{s7}
	\subsection{Basic results on root number}\label{s5}
	Let $k, F, K$ and $E \in \{E_1, E_2\}$ be as before. Recall $\sigma: \De=\begin{small}{\mathrm{Gal}(K/F)  \rightarrow \mathrm{GL}_n(\mathcal F)}\end{small}$ is a  self-dual Artin representation.  Also, $O$ is the ring of integer of $\mathcal F$ with uniformizer $\pi$. In view of Remark \ref{equal}, we now use the notation $\sigma_v:=\sigma\mid_{G_{F_v}}$. 
	For a prime $v$ of $F$, let (i) $\rho'_{E/F_v}$, (ii) $\rho_{E/F_v}$ and (iii) $\rho_{E,v}= \rho_E\mid_{G_{F_{\cyc,v}}}$ respectively denote (i) the representations of the Weil-Deligne group $ \mathcal W'(F_v)$ of $F_v$ associated to $E$, (ii) the Weil group $\mathcal W(F_v)$ of $F_v$ associated to $E$ \cite[\S 2, \S 3]{MR1260960} and (iii) the restriction of the $p$-adic Galois representation $\rho_E: G_k \rightarrow \mathrm{Aut}(V_pE)$ (see \S \ref{sel1})  at $G_{F_{\cyc,v}}$.  
	\par We will denote by $V_E$ the representation space for $\rho'_{E/F_v}$. Set $V_E^\sigma:=V_E \otimes \sigma$. Let $\varepsilon(V^\sigma_E)$ denote the local epsilon factor associated to $V_E^\sigma$ at $v$ \cite{MR0349635}. The local root number, $W(E/F_v,\sigma_v)$, at $v$ is then given by \cite{MR1387669}
	\begin{equation} \label{localroot}
	W(E/F_v, \sigma_v)=\mathrm{sign} (\varepsilon(V_E^\sigma))=\frac{\varepsilon(V_E^\sigma)}{|\varepsilon(V_E^\sigma)|} .
	\end{equation}
	Since $\sigma$ is self dual, $W(E/F_v, \sigma_v)\in \{\pm 1\}$ \cite[Equ. (3.1)]{MR1387669}. The global root number  $W(E/F,\sigma)$, is a product of all local root numbers i.e. $W(E/F,\sigma) :=\underset{\text{all prime } v}\prod  W(E/F_v,\sigma_v)$ \cite{MR1387669}. 
	
	Note that the local  Artin reciprocity map carries $F_v^\times$  isomorphically onto %the Weil group 
	$\mathcal W(F_v)^{\mathrm{ab}}$ inside Gal$(F_v^{ab}/F_v)$. %and hence there is  an 1-1 correspondence between  characters of $F_v^\times$ with the characters of $\mathcal W(F_v)$  .  
	In particular, if $\phi$ is a character of $G_{F_v}$, then $\phi(-1)$ is well defined and again by the local reciprocity law, %$G_{F_v}$, or equivalently, of $\Gal(F_v^{\textrm{ab}}/F_v)$ . %More precisely, $\phi(x)=\phi((x^{-1},F_v^{\textrm{ab}}/F_v))$ for $x \in F_v^\times$ \cite[\S $1$]{MR1387669}. 
	$\phi(-1)=1$ if $\phi$ is unramified (see for example, \cite[\S 1]{MR1260960}).
	%By \cite[Theorem $5.12$]{MR3058613}, $-1 \in F_v^\times$ corresponds to an element of the inertia group of $F_v$. Thus, $\phi(-1)=1$ if $\phi$ is unramified.
	Let $\omega(v)$ denote the unramified character of $\mathcal{W}(F_v)$ sending a Frobenius element to $q_v$, the order of the residue field of $F_v$. We recall the following results of Rohrlich which will be used to compare the local root numbers of $E_1$ and $E_2$.
	\begin{theorem}\label{root}
		Let $E/F$ be an elliptic curve, $\sigma$ be a self-dual, irreducible representation  of $\De$. 
		\begin{enumerate}
			\item [(i)] \cite[Theorem 2(i)]{MR1387669} If $v$ is an archimedean prime of $F$, then  $W(E/F_v,\sigma_v)=(-1)^{\dim \sigma}$.
			\item[(ii)]\cite[Prop. 8(i)]{MR1387669} If  $E$ has good reduction at a finite prime $v$ of $F$, then $W(E/F_v,\sigma_v)=\det \sigma_v(-1)$. 
			\item [(iii)] \cite[Theorem 2(ii)]{MR1387669}
			If $\mathrm{ord}_v(j_E)<0$, then $W(E/F_v,\sigma_v)=\det\sigma_v(-1) \phi_v(-1)^{\dim\sigma} (-1)^{\langle \sigma_v, \phi_v \rangle}$. Here 
			$\phi_v=1_v$ if $v$ is split multiplicative, $\phi_v=\varkappa_v$ is the unramified quadratic character if $v$ is non-split multiplicative, and $\phi_v =\vartheta_{E,v}$ (see \eqref{pmr}) is ramified quadratic if E is PMR at $v$. %(recall $\vartheta_{v}$ is a ramified quadratic character of $F_v^\times$ such that $E^{\vartheta_{v}}$ is split multiplicative at $v$ and $\vartheta_v$ is a character of Gal$(F(E[p])/F)$.) 
			\item [(iv)] 
			Let $v$ be a prime of additive, potentially good reduction of $E$. 	
			\begin{enumerate}
				\item [(a)]
				\cite[Prop. 2]{roh3} If $E$ is PGA at $v$, then $\rho_{E/F_v} \otimes \omega(v)^{1/2} \cong \chi_{E,v} \oplus \chi_{E,v}^{-1}$, for some character $\chi_v$ of $F_v^\times$. 
				In this case \cite[(1.6)]{roh3}, $W(E/F_v,\sigma_v)=\det\sigma_v(-1) \chi_{E,v}(-1)^{\dim\sigma}$. 
				\item [(b)] Let $E$ be PGNA at $v$. If $v \nmid 6$, then by the proof of \cite[Theorem 2(iii), page 332]{MR1387669},  $\rho_{E_2/F_v} \otimes \omega(v)^{1/2}=\Ind_{L/F_v}\phi$, where $L/F_v$ is an unramified quadratic extension and $\phi$ is a character of $L^\times$. Write $L=F_v(u)$, for some $u \in O_L^\times$. Further by \cite[Theorem 1]{MR1387669}, $W(E/F_v,\sigma_v)=\det\sigma_v(-1)  \phi(u)^{\dim\sigma}  (-1)^{\langle \sigma_v,1_v \rangle+\langle \sigma_v, \varkappa_v \rangle+\langle \sigma_v, \theta_{e,v} \rangle}$, where $e:= \#\rho_E(I_{F_v})=\#\overline{\rho}_E(I_{F_v})$ and $\theta_{e,v}$ is {{}the faithful 2-dimensional}
%a certain 2-dim irreducible 
representation of \begin{small}{$\mathrm{Gal}(F_v(E[p])/F_v) \cong D_{2e}$.}\end{small}    \qed 
			\end{enumerate}
		\end{enumerate}
	\end{theorem}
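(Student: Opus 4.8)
The statement is a compendium of results of Rohrlich resting on Deligne's construction of local $\varepsilon$-factors \cite{MR0349635}, so the plan is not to reprove these formulas from scratch but to organise the input: match the reduction-type dichotomy used in \S\ref{sel1} to the hypotheses of the relevant theorems, identify the auxiliary characters that occur there with the ones already fixed in this paper, and then quote. The first thing I would record is that self-duality of $\sigma$ is exactly what forces $W(E/F_v,\sigma_v)\in\{\pm1\}$, via \cite[(3.1)]{MR1387669}, so that all four assertions make sense as statements about signs.

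For part (i), at an archimedean place the Weil--Deligne representation $\rho'_{E/F_v}$ is the two-dimensional representation attached to $E$, whose local root number is $-1$; tensoring with $\sigma_v$ and using additivity of root numbers over direct summands of $\mathcal W'(F_v)$-representations produces a factor $-1$ for each of the $\dim\sigma$ copies, which is precisely \cite[Theorem 2(i)]{MR1387669}. For part (ii), good reduction at $v\nmid p$ makes $\rho'_{E/F_v}$ unramified with vanishing monodromy, so $\varepsilon(V_E^\sigma)$ reduces to a determinant computation; since $\det\rho'_{E/F_v}$ is (a power of) the cyclotomic character, which is unramified at $v$, the only surviving sign is $\det\sigma_v(-1)$, i.e.\ \cite[Prop. 8(i)]{MR1387669}. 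For part (iii), the condition $\mathrm{ord}_v(j_E)<0$ says that $\rho'_{E/F_v}$ is, up to a quadratic twist recording the reduction type, the Steinberg representation (nontrivial monodromy $N$); here I would check that the twisting character $\phi_v$ in \cite[Theorem 2(ii)]{MR1387669} is $1_v$, $\varkappa_v$, or $\vartheta_{E,v}$ according as $E$ is split, non-split, or PMR at $v$ — this is immediate from \eqref{varkappa} and \eqref{pmr} — after which the formula in the statement is Rohrlich's verbatim.

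For part (iv), additive potentially good reduction gives, after a finite extension, trivial monodromy, so $\rho_{E/F_v}$ is a genuine two-dimensional representation of $\mathcal W(F_v)$ with finite inertial image. In case (a) the unramified twist $\rho_{E/F_v}\otimes\omega(v)^{1/2}$ is the sum $\chi_{E,v}\oplus\chi_{E,v}^{-1}$ by \cite[Prop. 2]{roh3}; feeding this into additivity of $\varepsilon$ and the character-twist formula for $\varepsilon$-factors yields $\det\sigma_v(-1)\,\chi_{E,v}(-1)^{\dim\sigma}$, which is \cite[(1.6)]{roh3}. In case (b) with $v\nmid 6$ the inertial image is dihedral of order $2e$, one has $\rho_{E/F_v}\otimes\omega(v)^{1/2}=\Ind_{L/F_v}\phi$ for an unramified quadratic extension $L/F_v$, and the inductivity of $\varepsilon$-factors together with \cite[Theorem 1]{MR1387669} produces the stated formula with the three multiplicities $\langle\sigma_v,1_v\rangle$, $\langle\sigma_v,\varkappa_v\rangle$, $\langle\sigma_v,\theta_{e,v}\rangle$; this is essentially the computation carried out in the proof of \cite[Theorem 2(iii)]{MR1387669}.

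The only genuine obstacle here is bookkeeping: one has to be certain that the characters $\varkappa_v$, $\vartheta_{E,v}$, $\chi_{E,v}$ and the dihedral representation $\theta_{e,v}$ appearing in Rohrlich's statements are literally the objects fixed in \S\ref{sel1}, and that the case list ``good / split / non-split / PMR / PGA / PGNA'' exhausts all possibilities compatibly with the hypotheses ``$E$ has good reduction'', ``$\mathrm{ord}_v(j_E)<0$'', and ``$E$ has additive potentially good reduction''. Once those identifications are in place, no further argument is required.
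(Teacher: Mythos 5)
Your proposal is correct and follows essentially the same route as the paper: the theorem is a compendium of Rohrlich's results (plus one item from \cite{roh3}) and the paper's own ``proof'' consists of the citations given in the statement itself, which is why it ends with a \verb|\qed| and no separate argument. Your organizational glosses (additivity of $\varepsilon$-factors, the Steinberg-twist dichotomy governed by $\phi_v\in\{1_v,\varkappa_v,\vartheta_{E,v}\}$, the identification of Rohrlich's characters with the ones fixed in \S\ref{sel1}) are accurate reconstructions of what the cited statements assert, so there is nothing further to add beyond noting that in part (ii) the restriction $v\nmid p$ you impose is not in the statement (good reduction at $v\mid p$ is also covered, using an auxiliary $\ell$-adic representation with $\ell\neq p$) but this does not affect anything downstream.
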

	
	\begin{remark} \label{pga}
		(1) Recall, $\rho'_{E/F_v}$ is a pair $(\rho_{E/F_v},N)$, where $N$ is a nilpotent endomorphism on $V_E$ satisfying $\rho_{E/F_v}(g)N\rho_{E/F_v}(g)^{-1}=\omega(v)(g) N$ for all $g \in \mathcal{W}(F_v)$ \cite[\S 3]{MR1260960}. When $E$ is additive and potentially good at $v \mid \ell$, with $\ell \neq p$ then $N=0$ and $\rho'_{E/F_v}=\rho_{E/F_v}$ \cite[Prop. \S 14]{MR1260960}. 
		\par In the case (iv)(a) of Theorem \ref{root}, we have $\rho_{E,v} \cong \varepsilon_{E,v} \oplus \omega_v \varepsilon_{E,v}^{-1}$ (cf. \S \ref{sel1}).  As any real power of $\omega(v)$ is unramified, we see from the definition of a representation of Weil-Deligne group \cite[Prop. \S 4]{MR1260960} that $\varepsilon_{E,v}(-1)=\chi_{E,v}(-1)$.
		\par (2) In Theorem \ref{root} (iv)(b), if $v \mid 2$ or $v \mid 3$, we see from the proof of \cite[Theorem 2(iii)]{MR1387669} that as long as $\rho_{E/F_v}$ is an induced representation {{} from an unramified quadratic extension}, we can continue to use the formula in Theorem \ref{root} (iv)(b).
	\end{remark}
	\subsection{Comparison of the  root numbers}\label{rootnumbereisigma}
	Note  that $\rho'_{E_i/F_v} \otimes \omega(v)^{1/2}$ is symplectic \cite[\S 16]{MR1260960}.  Let $a(\rho'_{E/F_v})$ denote the conductor {{}exponent} of $\rho'_{E/F_v}$ (see \cite[\S 10]{MR1260960}). At first, we prove the following:
	\begin{lemma} \label{frac}
		Let $E_1, E_2$ be two elliptic curves with $E_1[p] \cong E_2[p]$ as $G_k$-modules. Let $v \mid \ell$ with $\ell \neq p$ be a finite prime of $F$ such that $a(\rho'_{E_1/F_v} \otimes \sigma)=a(\rho'_{E_2/F_v} \otimes \sigma)$. Then 
		\begin{small}{\begin{equation*} 
				\frac{W(E_1/F_v, \sigma_v)}{W(E_2/F_v, \sigma_v)} = \frac{\det\big(-\Frob_{\cyc,v} \mid  (V^\sigma_{E_2})^{I_{F_v}} \big)}{\det\big(-\Frob_{\cyc,v} \mid  (V^\sigma_{E_1})^{I_{F_v}} \big)} \pmod{\pi}. 
				\end{equation*}}\end{small} 
	\end{lemma}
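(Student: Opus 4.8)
## Proof proposal for Lemma~\ref{frac}

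\textbf{Overall approach.} The plan is to use the standard formula expressing a local root number of a self-dual (symplectic after a twist) Weil--Deligne representation in terms of its $\varepsilon$-factor, and to exploit the fact that the \emph{only} datum distinguishing $\rho'_{E_1/F_v}\otimes\sigma$ from $\rho'_{E_2/F_v}\otimes\sigma$ modulo $\pi$ is the action of Frobenius on the inertia invariants. Since $E_1[p]\cong E_2[p]$ as $G_k$-modules, we have $V_{E_1}^\sigma\equiv V_{E_2}^\sigma\pmod\pi$ as $G_{F_v}$-representations (after tensoring the isomorphism $E_1[p]\cong E_2[p]$ with $L_\sigma$ and reducing); in particular their reductions have the same inertia invariants, the same conductor exponent, and the same determinant. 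The equality $a(\rho'_{E_1/F_v}\otimes\sigma)=a(\rho'_{E_2/F_v}\otimes\sigma)$ is assumed precisely so that the ``wild'' and ``tame ramification'' parts of the $\varepsilon$-factors agree, leaving only the unramified correction term, which is a determinant of (minus) Frobenius on the inertia-fixed subspace.

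\textbf{Key steps.} First I would recall Deligne's formula for the $\varepsilon$-factor of a Weil--Deligne representation $\rho'=(\rho,N)$: one has $\varepsilon(\rho',\psi,dx)=\varepsilon(\rho,\psi,dx)\cdot\det\!\big(-\Frob_v\mid \rho^{I_{F_v}}/\rho_N^{I_{F_v}}\big)$, where $\rho_N$ is the kernel of $N$ on the inertia invariants, and $\varepsilon(\rho,\psi,dx)$ depends only on $\rho$ through its semisimplification, its conductor, and the choices of additive character $\psi$ and Haar measure. Applying this to $\rho'=\rho'_{E_i/F_v}\otimes\sigma$ and taking the quotient $W(E_1/F_v,\sigma_v)/W(E_2/F_v,\sigma_v)$, the $\psi$- and $dx$-dependent factors cancel. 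Second, I would observe that because $V_{E_1}^\sigma$ and $V_{E_2}^\sigma$ have congruent semisimplifications mod $\pi$ and \emph{equal} conductor exponents (the hypothesis), the factors $\varepsilon(\rho_{E_i/F_v}\otimes\sigma,\psi,dx)$ are equal modulo $\pi$ (these are, up to the cancelling normalisation, roots of unity determined by the mod-$\pi$ data; here one uses that the $\varepsilon$-factor of a representation with trivial $N$ depends only on the Weil-group representation and that congruent representations with equal conductor have congruent $\varepsilon$-factors --- this is the content of the conductor-exponent hypothesis). Third, I would handle the ``monodromy'' correction: when $E_i$ has potentially multiplicative reduction at $v$ the nilpotent $N$ is nonzero, but in that case $\rho_{E_i/F_v}$ is (a twist of) the special representation and $(V_{E_i}^\sigma)^{I_{F_v}}$, $\rho_N^{I_{F_v}}$ can be described explicitly; one checks the contributions from the $N$-part cancel in the ratio, again using $E_1[p]\cong E_2[p]$ (so the relevant twisting characters $\vartheta_{E_i,v}$, $\varkappa_v$, $1_v$ agree mod $\pi$, cf. the analysis in \S\ref{sub1}). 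What survives is exactly $\det\!\big(-\Frob_v\mid (V_{E_2}^\sigma)^{I_{F_v}}\big)/\det\!\big(-\Frob_v\mid (V_{E_1}^\sigma)^{I_{F_v}}\big)$, and since $\Frob_{\cyc,v}$ is the image of $\Frob_v$ under $G_{F_v}\twoheadrightarrow G_{F_{\cyc,v}}$ (the cyclotomic $\Z_p$-extension being totally ramified at $v\nmid p$, or more precisely acting through a pro-$p$ unramified part that is trivial on the prime-to-$p$ inertia invariants), we may replace $\Frob_v$ by $\Frob_{\cyc,v}$ in the final formula.

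\textbf{Main obstacle.} The delicate point is the second step: justifying that the ``intrinsic'' part of the $\varepsilon$-factor (the part not involving the unramified Frobenius-determinant correction) is the same mod $\pi$ for the two curves. This requires knowing that $\varepsilon$-factors behave well under congruences given a fixed conductor --- essentially that for representations $\rho,\rho'$ of $\mathcal W(F_v)$ with $\rho\equiv\rho'\pmod\pi$ and $a(\rho)=a(\rho')$ one has $\varepsilon(\rho,\psi,dx)/\varepsilon(\rho',\psi,dx)\equiv 1\pmod\pi$ after dividing out the tame/unramified discrepancy; I would invoke the explicit description of the mod-$\ell$ reduction of local constants (Deligne's inductivity in degree zero plus the formula for $\varepsilon$ of a ramified character via Gauss sums, which are $\pi$-adically determined by the residual character once the conductor is fixed). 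Organising this cleanly --- in particular making sure the ratio is a unit at $\pi$ so the congruence statement makes sense --- is the technical heart, and is where I expect to spend most of the effort; the rest is bookkeeping with inertia invariants and the identification $\Frob_v\mapsto\Frob_{\cyc,v}$.
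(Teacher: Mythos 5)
You identified the right overall strategy---reduce the ratio of root numbers to a ratio of local $\varepsilon$-factors, use the congruence of residual representations and the conductor hypothesis to control things mod $\pi$, and replace $\Frob_v$ by $\Frob_{\cyc,v}$ at the end because $\Gal(F_{\cyc,v}/F_v)$ is pro-$p$. But there is a genuine gap in your second step, and the paper's route is both shorter and avoids it. The paper introduces Deligne's modified local constant
\[
\varepsilon_0(V^\sigma_E)=\varepsilon(V^\sigma_E)\cdot\det\bigl(-\Frob_v\mid (V^\sigma_E)^{I_{F_v}}\bigr)\in O^\times,
\]
and invokes the proof of \cite[Theorem 6.5(c)]{MR0349635}, whose content is precisely that $\varepsilon_0\pmod\pi$ is determined by the residual representation $E[p]\otimes\widetilde\sigma$; this gives $\varepsilon_0(V^\sigma_{E_1})\equiv\varepsilon_0(V^\sigma_{E_2})\pmod\pi$ outright. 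It then uses the symplecticity of $\rho'_{E_i/F_v}\otimes\omega(v)^{1/2}$ and the hypothesis $a(\rho'_{E_1/F_v}\otimes\sigma)=a(\rho'_{E_2/F_v}\otimes\sigma)$ together with Rohrlich's Lemma in \cite[\S 12]{MR1260960} to get $|\varepsilon(V^\sigma_{E_1})|=|\varepsilon(V^\sigma_{E_2})|$, and since $W(E_i/F_v,\sigma_v)\in\{\pm1\}$, unfolding the definitions and observing that $\det(-\Frob_v\mid\cdot)$ is a $p$-adic unit completes the proof.

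Your step two asserts that the \emph{unmodified} Weil-group factors $\varepsilon(\rho_{E_i/F_v}\otimes\sigma,\psi,dx)$ are congruent mod $\pi$ because ``congruent representations with equal conductor have congruent $\varepsilon$-factors.'' This is not a valid step, and it is exactly what Deligne's theorem does \emph{not} say: the object with the congruence property is $\varepsilon_0$, not $\varepsilon$, and the $\det(-\Frob\mid V^I)$ correction is there precisely to absorb the discrepancy. You also conflate two distinct corrections, namely $\det(-\Frob\mid V^I/V_N^I)$ (relating the Weil--Deligne and Weil-group $\varepsilon$-factors) and $\det(-\Frob\mid V^I)$ (Deligne's $\varepsilon_0$-normalisation); these play different roles and your ``cancellation'' argument does not keep them apart. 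In effect you are attempting to reprove Deligne's congruence theorem from scratch, which is both unnecessary---the paper simply cites it---and, as written, incomplete. Finally, you gesture at needing the ratio to be a unit at $\pi$ but do not carry this out; the conductor equality alone does not control $|\varepsilon|$ without the symplecticity input from Rohrlich's lemma.
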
 
	
		\proof \par Let $E \in \{E_1,E_2 \}$. For $v \mid \ell$ with $\ell \neq p$, recall the modified local constant, $\varepsilon_0(V^\sigma_E)$, defined by Deligne \cite[Equ. (5.1) \& Theorem 6.5(a)]{MR0349635}:
		\begin{equation} \label{modified}
		\varepsilon_0(V^\sigma_E)=\varepsilon(V^\sigma_E) \det\big(-\Frob_v \mid  {(V^\sigma_E)}^{I_{F_v}} \big) \in O^\times
		\end{equation}
		Further from the proof of \cite[Theorem $6.5(c)$]{MR0349635}, it can be shown that $\varepsilon_0(V^\sigma_E)  \pmod \pi$ depends on the residual representation $E[p] \otimes \widetilde \sigma$. In particular, $\varepsilon_0(V^\sigma_{E_1})= \varepsilon_0(V^\sigma_{E_2}) \pmod \pi$. 
		\par   Now, as $\rho'_{E_i/F_v} \otimes \omega(v)^{1/2}$ is symplectic %(see \cite[\S 16]{MR1260960}) 
		and $\sigma$ is self-dual, we can apply \cite[Lemma \S 12]{MR1260960} to deduce $|\varepsilon(V^\sigma_{E_i})| = \ell^{m_i}$. 
		Further using $a(\rho'_{E_1/F_v} \otimes \sigma)=a(\rho'_{E_2/F_v} \otimes \sigma)$,  we also deduce from \cite[Lemma \S 12]{MR1260960},  $m_1=m_2$ i.e. $|\varepsilon(V^\sigma_{E_1})|=|\varepsilon(V^\sigma_{E_2})|$. 
		\par  Recall, $W(E_i/F_v, \sigma_v) \in \{\pm1\}$. As $|\varepsilon(V^\sigma_{E_i})| = \ell^{m_i}$, using \eqref{localroot} and \eqref{modified} we deduce that $\det\big(-\Frob_v \mid  (V^\sigma_{E_i})^{I_{F_v}} \big)$ is a $p$-adic unit. Hence, $\det \big(-\Frob_v \mid  (V^\sigma_{E_i})^{I_{F_v}} \big) \pmod \pi$ is an element of $\overline{\mathbb{F}}_p^\times$. Again since $\Gal(F_{\cyc,v}/F_v)$ is a pro-$p$ group, we obtain that 
		\begin{small}{\begin{equation} \label{cyclo}
		\det\big(-\Frob_v \mid  (V^\sigma_{E_i})^{I_{F_v}} \big) = \det\big(-\Frob_{\cyc,v} \mid  (V^\sigma_{E_i})^{I_{F_v}} \big) \pmod \pi.
		\end{equation}}\end{small}
		Using \eqref{cyclo} in \eqref{localroot} and \eqref{modified}, we obtain the result. \qed
	\par With the notation of Theorem \ref{thm1}, put 
	\begin{small}{$m_i:=\sum_{v \in S_i} \langle \sigma_v, 1_v \rangle  + \sum_{v \in N_i}^{} \langle \sigma_v, \varkappa_v \rangle$ for $i=1,2$, and $T:=\sum_{v\in W} \big( \langle \sigma_v, \vartheta_v \rangle +  \langle \sigma_v,  \omega_v\vartheta_v \rangle \big) + \sum_{v \in X} \langle \sigma_v, \vartheta_{v} \rangle + \sum_{v \in Y_3}  \big(\langle \sigma_v,   \varkappa_v \rangle  + \langle \sigma_v,   1_v \rangle  +  \langle \sigma_v,   \theta_{3,v}\rangle\big) +	\sum_{v \in Z_3} \big( \langle \sigma_v, \vartheta_{v} \rangle +  \langle \sigma_v, \vartheta_{v}\theta_{3,v} \rangle \big) $.}\end{small} Then we have the following comparison of the root numbers of two congruent elliptic curves:

	\begin{theorem} \label{propo4} 	
		Let $E_1, E_2$ be two elliptic curves over $k$ with good reductions at all primes of $k$ lying above an odd prime $p  $ and $E_1[p] \cong E_2[p]$ as $G_k$-modules. Let $\sigma
		$ be an irreducible, self-dual, orthogonal representation of $\De$. Then \begin{small}{ \[
				\frac{W(E_1/F,\sigma)}{W(E_2/F,\sigma)}=(-1)^{m_1-m_2+T}.
				\]}\end{small}
	\end{theorem}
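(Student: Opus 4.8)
The plan is to expand the global root number into its local factors, $W(E_i/F,\sigma)=\prod_v W(E_i/F_v,\sigma_v)$, and to compute the finite product $\prod_v W(E_1/F_v,\sigma_v)/W(E_2/F_v,\sigma_v)$ place by place, showing that all but finitely many factors are $1$ and that the surviving ones reassemble into $(-1)^{m_1-m_2+T}$. At an archimedean $v$ both local root numbers equal $(-1)^{\dim\sigma}$ (Theorem~\ref{root}(i)); at $v\mid p$ both $E_1,E_2$ have good reduction (good reduction above $p$ being stable under the base change $k\subset F$) and both local root numbers equal $\det\sigma_v(-1)$ (Theorem~\ref{root}(ii)); so the local ratio is $1$ in these cases. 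For a finite prime $v\nmid p$ with $v\notin\Sigma_0$ I would invoke Lemma~\ref{frac}: the very definition of $\Sigma_0$ guarantees that at such $v$ the conductor exponents satisfy $a(\rho'_{E_1/F_v}\otimes\sigma)=a(\rho'_{E_2/F_v}\otimes\sigma)$ (the reduction of each $E_i$ at $v$ being good, multiplicative, or additive, with the inertia action --- hence the conductor of the twist --- determined mod $\pi$ by $E_i[p]$), while the same input gives $\det\big(-\Frob_{\cyc,v}\mid(V^\sigma_{E_1})^{I_{F_v}}\big)\equiv\det\big(-\Frob_{\cyc,v}\mid(V^\sigma_{E_2})^{I_{F_v}}\big)\pmod\pi$ because the $I_{F_v}$-invariants do not jump under reduction here; Lemma~\ref{frac} then forces the local ratio to be $\equiv1\pmod\pi$, hence $=1$ as both sides lie in $\{\pm1\}$ and $p$ is odd. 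This leaves $\prod_{v\in\Sigma_0}W(E_1/F_v,\sigma_v)/W(E_2/F_v,\sigma_v)$.

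The heart of the proof is the evaluation of $W(E_1/F_v,\sigma_v)/W(E_2/F_v,\sigma_v)$ at each $v\in\Sigma_0$, by a case analysis on the ordered pair of reduction types of $(E_1,E_2)$ at $v$ that runs exactly parallel to \S\ref{sub1} and Table~\ref{table2} --- the configurations excluded there (good/additive, PGA/PGNA, and most mixed cases outside $p=3$) being excluded here as well. In each surviving case I would substitute the explicit formulas of Theorem~\ref{root}(ii)--(iv) and Remark~\ref{pga}; the omnipresent factor $\det\sigma_v(-1)$ cancels in the ratio. For a ``diagonal'' pair (the two reduction types agree) the remaining factors match: the multiplicity contributions $(-1)^{\langle\sigma_v,\phi_v\rangle}$ and $(-1)^{\langle\sigma_v,\theta_{e,v}\rangle}$ agree because $E_1[p]\cong E_2[p]$ forces either $\rho_{E_1,v}\cong\rho_{E_2,v}$ (via Lemma~\ref{lem5}(vi) and the character tables of $D_6$, $D_{12}$, $\SL_2(\F_3)$, $\GL_2(\F_3)$) or equality of the relevant unramified characters, while the ``dimension'' factors $\phi_v(-1)^{\dim\sigma}$, $\chi_{E_i,v}(-1)^{\dim\sigma}$, $\phi(u)^{\dim\sigma}$ agree by combining: unramified characters are trivial at $-1$; $\varepsilon_{E_i,v}(-1)=\chi_{E_i,v}(-1)$ (Remark~\ref{pga}(1)); the norm relation $\phi|_{O_{F_v}^\times}=1$ when $L/F_v$ is the unramified quadratic extension; and, where $\rho_{E_i,v}$ comes from a dihedral group of order $2e$ with $3\mid e$, the factor $\phi(u)^{\dim\sigma}$ is a root of unity of $3$-power order lying in $\{\pm1\}$, hence $=1$. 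For an ``off-diagonal'' pair I would read off a residual sign $(-1)^{e_v}$ with $e_v$ a sum of multiplicities among $\langle\sigma_v,1_v\rangle$, $\langle\sigma_v,\varkappa_v\rangle$, $\langle\sigma_v,\vartheta_v\rangle$, $\langle\sigma_v,\omega_v\vartheta_v\rangle$, $\langle\sigma_v,\theta_{3,v}\rangle$, $\langle\sigma_v,\vartheta_v\theta_{3,v}\rangle$ --- exactly the terms recorded for that configuration --- using $\omega_v=\varkappa_v$ when $p=3$ and $\mu_3\not\subset F_v$, Remark~\ref{equal} to interpret the multiplicities over $G_{F_v}$, and Remark~\ref{pga}(2) to keep the induced-representation formula valid at $v\mid2$ and $v\mid3$. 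Collecting the $e_v$ over all $v\in\Sigma_0$ and sorting them according to the sets $S_i,N_i,W,X,Y_3,Z_3$ of Definition~\ref{exp-prime} (the $(-1)^{\langle\sigma_v,1_v\rangle}$ and $(-1)^{\langle\sigma_v,\varkappa_v\rangle}$ contributions at split, resp. non-split, multiplicative primes assembling into $(-1)^{m_1-m_2}$ and the rest into $(-1)^{T}$) gives $\prod_{v\in\Sigma_0}W(E_1/F_v,\sigma_v)/W(E_2/F_v,\sigma_v)=(-1)^{m_1-m_2+T}$, which proves the theorem.

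The step I expect to be the main obstacle is precisely the control of the ``dimension'' factors $\phi_v(-1)^{\dim\sigma}$, $\chi_{E_i,v}(-1)^{\dim\sigma}$ and $\phi(u)^{\dim\sigma}$ in the additive-reduction cases: showing that they either cancel between $E_1$ and $E_2$ or recombine with $(-1)^{\langle\sigma_v,1_v\rangle+\langle\sigma_v,\varkappa_v\rangle}$-type factors into the stated multiplicity terms is where the congruence $E_1[p]\cong E_2[p]$ and the orthogonal self-duality of $\sigma$ must be used together, and where the primes above $2$ and $3$ demand separate, delicate arguments --- the analytic counterpart of the difficulties already encountered in \S\ref{sub1}.
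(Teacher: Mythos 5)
Your proposal reproduces the paper's overall strategy: factor the global root number into local factors, dispose of the archimedean primes and $v\mid p$ by Theorem~\ref{root}(i),(ii), and reduce to a case-by-case comparison of $W(E_1/F_v,\sigma_v)/W(E_2/F_v,\sigma_v)$ at finite primes away from $p$ according to the reduction-type pair of $(E_1,E_2)$ at $v$, mirroring \S\ref{sub1}. You also correctly identify the delicate point --- controlling the $\det\sigma_v(-1)$, $\phi_v(-1)^{\dim\sigma}$, $\chi_{E,v}(-1)^{\dim\sigma}$, $\phi(u)^{\dim\sigma}$ factors in the additive cases --- which is exactly where the paper spends most of its effort (cases (iii)--(iv)), and your list of tools (Remark~\ref{pga}, Remark~\ref{equal}, $\omega_v=\varkappa_v$ when $p=3$, $\mu_3\not\subset F_v$, the quadratic-twist trick $E_i'=E_i^{\vartheta_{E_1,v}}$) matches the paper's proof.

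The one place you genuinely depart from the paper is the treatment of finite primes $v\nmid p$ outside the case analysis, where you propose to invoke Lemma~\ref{frac} uniformly, asserting both $a(\rho'_{E_1/F_v}\otimes\sigma)=a(\rho'_{E_2/F_v}\otimes\sigma)$ and equality of $\det(-\Frob_{\cyc,v}\mid(V^\sigma_{E_i})^{I_{F_v}})\pmod\pi$ from the residual congruence $E_1[p]\cong E_2[p]$ alone. That is not justified as written: Lemma~\ref{frac} needs the conductor-exponent equality as a \emph{hypothesis}, and for a general prime with differing reduction types (e.g. both curves additive with distinct Weil--Deligne structures) there is no a priori reason the conductor exponents of the twists match. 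The paper only invokes Lemma~\ref{frac} in the one case where the hypothesis \emph{is} verified by hand, namely PGNA--PGNA (case (iv)(e)), after first proving $\rho_{E_1,v}\cong\rho_{E_2,v}$ via Lemma~\ref{lem5}(vi), whence the conductors agree because they depend only on the restriction to $I_{F_v}\subset G_{F_{\cyc,v}}$. For all the other configurations the paper simply computes both local root numbers explicitly from Theorem~\ref{root} and Remark~\ref{pga}, with no appeal to Lemma~\ref{frac}. Your shortcut would either need a case-by-case verification of the conductor hypothesis (which would not save any work) or a separate argument, and as stated it leaves a gap. Note also that the assertion ``the $I_{F_v}$-invariants do not jump under reduction'' is not a property of elliptic-curve Weil--Deligne representations in general; it is where the residual congruence must actually be exploited, as the paper does.
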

	\begin{proof}
		We will compare $W(E_i/F_v, \sigma_v)$ for $i=1,2$,  as $v$ varies. By Theorem \ref{root}(i), we only need to consider finite primes. Further, if $E_1$ and $ E_2$ have good reduction at  $v$, then by Theorem \ref{root}(ii), $W(E_1/F_v, \sigma_v) =W(E_2/F_v, \sigma_v)$. In particular, this holds for all $v \mid p$. By Theorem \ref{root}(iii), the same conclusion holds if $E_1$ and  $ E_2$ are both split-multiplicative  or both  non-split multiplicative  at $v$. 
		\begin{enumerate}
			\item[(i)]
			(good - multiplicative).
			Suppose that $E_1$ has good reduction at $v$ and $E_2$ has multiplicative reduction at $v$. By Theorem \ref{root}, case (ii) and (iii), we deduce
			$
			\frac{W(E_1/F_v,\sigma_v)}{W(E_2/F_v,\sigma_v)}=
			\begin{cases} 
			(-1)^{\langle \sigma_v,1_v \rangle}, & \text{if } v \ \ \text{is split} \\
			(-1)^{\langle  \sigma_v, \varkappa_v \rangle}, &   \text{if } v \ \ \text{is non-split} .
			\end{cases}
			$
			
			\item[(ii)]
			(split - non-split).
			Assume that $E_1$ is split multiplicative at $v$ and $E_2$ is non-split multiplicative at $v$. By Theorem \ref{root}, we get
			$\frac{W(E_1/F_v,\sigma_v)}{W(E_2/F_v,\sigma_v)}= (-1)^{\langle \sigma_v,1_v \rangle + \langle \sigma_v, \varkappa_v \rangle}.$
			
			\item [(iii)]
			(multiplicative - additive)
			
			(a) Let $E_1$ be multiplicative at $v$ and $E_2$ be PGA at $v$. From the proof of Lemma \ref{Lem1}(ii), $p=3$,  $3 \mid \# \rho_{E_2}(G_{F_{\cyc,v}})$ and $\mu_3\subset F_v$. In this case, $\rho_{E_2,v} \cong \varepsilon_{E_2,v} \oplus \varepsilon_{E_2,v}^{-1}$. Since $\rho_{E_1}|_{I_{F_v}} \sim \begin{small}{\begin{pmatrix}
				1 & * \\
				0 & 1
				\end{pmatrix}}\end{small}$ and $\overline{\rho}_{E_1,v} \cong \overline{\rho}_{E_2,v}$, we deduce $\overline{\varepsilon}_{E_2,v} $ is unramified and thus $\overline{\varepsilon}_{E_2,v}(-1) =\varepsilon_{E_2,v}(-1)=1$.  
			Then by  Remark \ref{pga} and the cases (iii) \& (iv)(a) of Theorem \ref{root},  $\frac{W(E_1/F_v,\sigma_v)}{W(E_2/F_v,\sigma_v)}= (-1)^{\langle \sigma_v, 1_v \rangle}$  if $v$ is split and $= (-1)^{\langle \sigma_v, \varkappa_v \rangle} $ if $v$ is non-split.

			\vskip 2mm
			
			(b) Next we consider  $E_1$ is multiplicative at $v$ and $E_2$ is PGNA at $v$. From the proof of Lemma \ref{Lem1}(iii), this can only happen when $p=3$, $3 \mid \# \rho_{E_2}(G_{F_{\cyc,v}})$, $\mu_3 \not\subset F_v$ and $e=\# \rho_{E_2}|_{I_{F_v}}=3$. Moreover, $\rho_{E_2,v} \cong \theta_{3,v}$ is the unique $2$-dim irreducible representation of $\Gal(F_{\cyc,v}(E_2[p])/F_{\cyc,v}) \cong D_6$.
			Using Theorem \ref{root}(iv)(b),	 $\rho_{E_2/F_v} \otimes \omega(v)^{1/2}=\Ind_{L/F_v}\phi$ with $L/F_v$ is unramified quadratic 
			and $W(E_2/F_v, \sigma_v)=\det \sigma_v(-1) (\phi(u))^{\dim\sigma}(-1)^{\langle \sigma_v, 1_v \rangle + \langle \sigma_v, \varkappa_v \rangle + \langle \sigma_v, \theta_{3,v} \rangle}.$ 
			\par Note that the value of $\phi(u)$ is independent of $\sigma_v$. By \cite[Lemma 5.4]{MR3629245},  {\small$\frac{W(E_1/F_v,1_v)}{W(E_2/F_v,1_v)}$} $=1$ if  $E_1$ is spilt at $v$ and $-1$ if $E_1$ is non-split at $v$. Therefore, we deduce that  $\phi(u)$ appearing in the expression of $W(E_2/F_v, \sigma_v)$ above is equal to $1$. Now using Theorem \ref{root}(iii), from the above discussion {\small $\frac{W(E_1/F_v,\sigma_v)}{W(E_2/F_v, \sigma_v)}= (-1)^{\langle \sigma_v, \varkappa_v \rangle+\langle \sigma_v, \theta_{3,v} \rangle}$} if $E_1$ is split at $v$. Similarly, in the non-split case {\small $\frac{W(E_1/F_v,\sigma_v)}{W(E_2/F_v,\sigma_v)}=  (-1)^{\langle \sigma_v, 	1_v \rangle+\langle \sigma_v, \theta_{3,v} \rangle}$}. 
			\item[(iv)](additive - additive).
			\begin{enumerate}	
				\item[(a)] If both the curves are PMR at $v$, then  by the proof of Lemma \ref{lem5}(i), either  $\vartheta_{E_1,v}=\vartheta_{E_2,v}$  in which case $W(E_1/F_v,\sigma_v)=W(E_2/F_v,\sigma_v)$. Otherwise,  $\vartheta_{E_1,v}=\omega_v\vartheta_{E_2,v}$ and then, $\frac{W(E_1/F_v,\sigma_v)}{W(E_2/F_v,\sigma_v)}=  (-1)^{\langle \sigma_v, \vartheta_{E_1,v} \rangle+\langle \sigma_v, \omega_v\vartheta_{E_1,v} \rangle}$.
				\item[(b)] Next, assume $E_1$ is PMR at $v$ and $E_2$ is PGA at $v$. By Remark \ref{pga}(1) and the cases (iii) and (iv)(a) of Theorem \ref{root}, we can express $\frac{W(E_1/F_v,\sigma_v)}{W(E_2/F_v,\sigma_v)} =  \vartheta_{v}(-1)^{\dim\sigma} (-1)^{\langle \sigma_v, \vartheta_{E_1,v} \rangle}\varepsilon_{E_2,v}(-1)^{\dim\sigma}$. If $p \nmid \# \rho_{E_2}(G_{F_{\cyc,v}})$, then using the proof of Lemma \ref{lem5}(iii),  $\vartheta_{E_1,v} \in \{\varepsilon_{E_2,v},\varepsilon_{E_2,v}^{-1}\omega_v\}$. As $\omega_v$ is unramified,  $\frac{W(E_1/F_v,\sigma_v)}{W(E_2/F_v,\sigma_v)}= (-1)^{\langle \sigma_v, \vartheta_{E_1,v} \rangle}.$
				\par \noindent Now  if $p \mid \# \rho_{E_2}(G_{F_{\cyc,v}})$, then as explained before, $p=3, \mu_3 \subset F_v$ and  $\omega_v =1_v$. Thus \begin{small}{$\rho_{E_1,v} \sim \begin{pmatrix} \vartheta_{E_1,v} & * \\ 0 & \vartheta_{E_1,v}\end{pmatrix}$}\end{small} and $\rho_{E_2,v} \cong \varepsilon_{E_2,v} \oplus \varepsilon_{E_2,v}^{-1}$. As $\overline{\rho}_{E_1,v} \cong \overline{\rho}_{E_2,v}$,  we conclude that $\vartheta_{E_1,v}(-1)=\varepsilon_{E_2,v}(-1)$. Then using Remark \ref{pga} and Theorem \ref{root} (iii) \& (iv)(a), $\frac{W(E_1/F_v,\sigma_v)}{W(E_2/F_v,\sigma_v)}= (-1)^{\langle \sigma_v, \vartheta_{E_1,v} \rangle}$.
				\item[(c)]Let $E_1$ be PMR at $v$ and $E_2$ be PGNA at $v$. Then $p =3, \mu_3 \not\subset F_v $ and $3 \mid  \#\rho_{E_2}(G_{F_{\cyc,v}})$ (see Lemma \ref{lem5}(ii)).  Then $E_1':=E_1^{\vartheta_{E_1,v}}$ is split at $v$. Recall, $\rho_{E_2,v} \cong \theta_{6,v}$,  a certain  $2$-dim. irreducible representation of the dihedral group of order $12$. Also, $E_2':=E_2^{\vartheta_{E_1,v}}$ is PGNA at $v$. Note that $E_1'[3] \cong E_2'[3]$. Then by a similar argument, as in the proof of Lemma \ref{lem5}(ii), we get $\# \overline{\rho}_{E_2'}|_{I_{F_v}}=\#\rho_{E_2'}|_{I_{F_v}}=3$. Therefore, $\rho_{E_2',v} \cong \theta_{3,v}$, that is, $\rho_{E_2,v} \otimes \vartheta_{E_1,v} \cong \theta_{6,v} \otimes \vartheta_{E_1,v} \cong \theta_{3,v}$.

				Now using part (iii)(b) of this proposition related to the (split) multiplicative - additive case,  we obtain $\frac{W(E_1'/F_v,\sigma_v\vartheta_{E_1,v})}{W(E_2'/F_v,\sigma_v\vartheta_{E_1,v})} = 
				(-1)^{\langle \sigma_v \vartheta_{E_1,v}, \varkappa_v \rangle + \langle \sigma_v \vartheta_{E_1,v}, \theta_{3,v} \rangle}.$ Notice that $\vartheta_{E_1,v}$ being quadratic (self dual), $W(E_i'/F_v, \sigma_v\vartheta_{E_1,v})=W(E_i/F_v, \sigma_v)$ for $i=1,2$ and also $\langle \sigma_v,  \vartheta_{E_1,v} \chi \rangle = \langle \sigma_v \vartheta_{E_1,v}, \chi \rangle$  for $\chi \in \{\vartheta_{E_1,v}, \varkappa_v\}.$ 
				Moreover, as $p=3$ and $\mu_3 \not\in F_v$, the unramified quadratic character $\varkappa_v=\omega_v$  
				and hence ${\langle \sigma_v, \varkappa_v\vartheta_{E_1,v} \rangle}={\langle \sigma_v, \omega_v\vartheta_{E_1,v} \rangle}$. Hence, we deduce 
				$\frac{W(E_1/F_v,\sigma_v)}{W(E_2/F_v,\sigma_v)} =  
				(-1)^{\langle \sigma_v, \omega_v\vartheta_{E_1,v} \rangle + \langle \sigma_v, \theta_{3,v} \vartheta_{E_1,v}\rangle}.$
				\item[(d)] Let $E_1, E_2$ both have PGA at $v$. Recall, $\rho_{E_i,v}$ is a representation of $\Gal(F_{\cyc,v}(E_i[p])/F_{\cyc,v})$ for $i=1,2$ (see \S \ref{s2}). Using \cite[Page 312]{ser},  $\rho_{E_i}(I_{F_{v}})\cong \overline{\rho}_{E_i}(I_{F_{v}})$ is cyclic of order $e$ with possible value of $e \in \{2, 3, 4, 6\}$. Moreover, $\overline{\rho}_{E_1,v} \cong \overline{\rho}_{E_2,v}$ implies that $e=\# \rho_{E_1}|_{I_{F_v}}=\# \rho_{E_2}|_{I_{F_v}}$. Then for $i=1,2$, there is either one faithful representation $\varepsilon_{E_i,v}$, or two faithful representations $\varepsilon_{E_i,v}, \varepsilon_{E_i,v}^{-1}$ of the inertia subgroup of $\Gal(F_{\cyc,v}(E_i[p])/F_{\cyc,v})$. In either cases, we deduce $\varepsilon_{E_1,v}(-1) = \varepsilon_{E_2,v}(-1)$. Now by Remark \ref{pga}(1) and Theorem \ref{root}(iv)(a), the local root numbers at $v$ are the same.
				\item[(e)]Next, assume  $E_1, E_2$ both are PGNA at $v$. In this case, we have $\rho'_{E_i/F_v}=\rho_{E_i/F_v}$ for $i=1,2$. As observed in Lemma \ref{lem5}(vi), we also have $\rho_{E_1,v} \cong \rho_{E_2,v}$. Note that $I_{F_v} \subset G_{F_{\cyc,v}}$ and $a(\rho_{E_i/F_v} \otimes \sigma)$ just depends on the restriction of $\rho_{E_i/F_v} \otimes \sigma$ to the inertia subgroup.
				Hence using the fact $\rho_{E_1,v} \cong \rho_{E_2,v}$, we obtain $a(\rho_{E_1/F_v} \otimes \sigma)=a(\rho_{E_2/F_v} \otimes \sigma)$. Now by Lemma \ref{frac}, we conclude  $W(E_1/F_v, \sigma_v)=W(E_2/F_v, \sigma_v)$. 
			\end{enumerate}
		\end{enumerate}
		By Lemmas \ref{lem4} - \ref{lem5}, these are the only possibilities of reduction types for the pair $E_1$, $E_2$.  Thus from the above discussion, the result follows.
	\end{proof}
	\subsection{The relation between multiplicities and root numbers}\label{finalrusyl}
	Using Theorem \ref{thm1}, Remark \ref{equal} and Theorem \ref{propo4}, we obtain the main result of this article:
	\begin{theorem} \label{thm4}
		Let $p$ be an odd prime and $k$ be a number field such that all primes of $k$ lying above $p$ are ramified in $k(\mu_p)/k$. Let $E_1, E_2$ be elliptic curves  over $k$ with $E_1[p] \cong E_2[p]$ as $G_k$-modules. Let $\sigma: \De \to \mathrm{GL}_n(\overline{\Q}_p)$ be an irreducible self-dual orthogonal representation. Assume the hypothesis {\bf (H)}. Then 	
		\begin{small}{\[(-1)^{s_p(E_1,\sigma)-s_p(E_2,\sigma)}=
				\frac{W(E_1/F,\sigma)}{W(E_2/F,\sigma)}.
				\]}\end{small}
		\noindent In particular, the $p$-parity conjecture in  \eqref{sigma-parity-state} holds for the twist of $E_1/F$ by $\sigma$ if and only if it holds for the twist of $E_2/F$ by $\sigma$.  
	\end{theorem}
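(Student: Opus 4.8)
The plan is to assemble the two parity computations already in hand and check that the resulting exponents of $-1$ match modulo $2$. Recall that by Proposition~\ref{prop1} combined with Theorem~\ref{thm2} (the equality $\lambda^{\Sigma_0}_{E_1}(\sigma)=\lambda^{\Sigma_0}_{E_2}(\sigma)$), the difference $s_p(E_1,\sigma)-s_p(E_2,\sigma)$ is congruent modulo $2$ to $\sum_{v\in\Sigma_0}\big(\delta_{E_1,v}(\sigma)-\delta_{E_2,v}(\sigma)\big)$, and Theorem~\ref{thm1}, through the case analysis of Lemmas~\ref{lem4}--\ref{lem5} summarised in Table~\ref{table2}, makes this completely explicit: only the primes of $\Sigma_0$ lying in $S_1\cup S_2\cup W\cup X\cup Y_3\cup Z_3$ contribute, each through the multiplicity $\langle\sigma_v,\cdot\rangle$ recorded there. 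Rearranging, $s_p(E_1,\sigma)-s_p(E_2,\sigma)$ is congruent modulo $2$ to one explicit local term for each reduction-type configuration of the pair $(E_1,E_2)$ at each $v\in\Sigma_0$.

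On the analytic side, Theorem~\ref{propo4} gives $W(E_1/F,\sigma)/W(E_2/F,\sigma)=(-1)^{m_1-m_2+T}$, obtained by evaluating $W(E_1/F_v,\sigma_v)/W(E_2/F_v,\sigma_v)$ at every place: this ratio is $+1$ at the archimedean places, at the places $v\mid p$ (where $E_1,E_2$ have good reduction), and at every finite $v$ where $E_1$ and $E_2$ have the same reduction type, while at the remaining $v\in\Sigma_0$ it equals $(-1)$ raised to a local exponent read off from Rohrlich's formulas in Theorem~\ref{root}. The crucial point is then that, going through the reduction-type configurations one at a time, the local exponent produced here coincides with the local term produced on the algebraic side: for instance, if $E_1$ is good and $E_2$ split multiplicative at $v$ both sides yield $\langle\sigma_v,1_v\rangle$; if (so $p=3$, $\mu_3\not\subset F_v$) $E_1$ is PMR and $E_2$ is PGNA at $v$, both sides yield $\langle\sigma_v,\omega_v\vartheta_{E_1,v}\rangle+\langle\sigma_v,\vartheta_{E_1,v}\theta_{3,v}\rangle$; and similarly in the good--non-split, split--non-split, both-PMR, PMR--PGA, multiplicative--PGA and multiplicative--PGNA cases. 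The only subtlety is that the multiplicities in Theorem~\ref{thm1} are a priori taken over $G_{F_{\cyc,v}}$ whereas those in Theorem~\ref{propo4} are taken over $G_{F_v}$; Remark~\ref{equal} (using that $\sigma_v$ is self-dual and $p$ is odd) shows that for every character or two-dimensional representation that actually occurs — i.e. for $\dagger\in\{1_v,\varkappa_v,\vartheta_v,\vartheta_v\theta_{3,v},\theta_{3,v}\}$, and by the same twisting argument also for $\omega_v\vartheta_v$ — one has $\langle\sigma|_{G_{F_v}},\dagger\rangle\equiv\langle\sigma|_{G_{F_{\cyc,v}}},\dagger\rangle\pmod 2$, so the two sides may legitimately be compared. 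Hence the two exponents of $-1$ agree modulo $2$, giving $(-1)^{s_p(E_1,\sigma)-s_p(E_2,\sigma)}=W(E_1/F,\sigma)/W(E_2/F,\sigma)$.

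For the final assertion, rewrite this identity as $W(E_1/F,\sigma)\,(-1)^{-s_p(E_1,\sigma)}=W(E_2/F,\sigma)\,(-1)^{-s_p(E_2,\sigma)}$; since both sides lie in $\{\pm 1\}$, one equals $+1$ if and only if the other does, and $W(E_i/F,\sigma)(-1)^{-s_p(E_i,\sigma)}=1$ is exactly the statement \eqref{sigma-parity-state} for the twist of $E_i/F$ by $\sigma$. This yields the ``in particular'' clause.

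The genuinely hard part is not this final assembly but everything feeding into it: the delicate work lives inside Theorem~\ref{thm1} and Theorem~\ref{propo4}, namely the place-by-place analysis at additive primes and at $p=3$, where the characters $\vartheta_v,\theta_{3,v},\omega_v$ and the auxiliary sets $W,X,Y_3,Z_3$ enter, and where one must match the formula \eqref{local} for $\delta_{E_i,v}(\sigma)$ against Rohrlich's local root-number formulas while tracking whether the relevant multiplicity is computed over $G_{F_v}$ or over $G_{F_{\cyc,v}}$ — precisely the bookkeeping handled by Remark~\ref{equal}. Once those inputs are granted, Theorem~\ref{thm4} follows by the mechanical comparison sketched above.
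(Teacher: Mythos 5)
Your proposal is correct and follows exactly the route the paper takes: the paper's proof of Theorem~\ref{thm4} is precisely the combination of Theorem~\ref{thm1} (the algebraic parity), Theorem~\ref{propo4} (the root-number ratio), and Remark~\ref{equal} (to reconcile multiplicities over $G_{F_{\cyc,v}}$ with those over $G_{F_v}$), and you assemble these in the same way and correctly identify that the whole weight of the argument sits in the place-by-place case analysis feeding into those two theorems.
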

	\begin{remark}\label{symplecompr1}[Necessity of the condition (H)] The hypothesis (H4) stating $S_p(E_i[p^\infty]/K)[p]$ is finite, is necessary in the proof of Theorem \ref{thm2}, in Lemma \ref{L2} and also in   \eqref{896}. In fact, all the 4 hypothesis (H1) - (H4) was used in the proof of Theorem \ref{thm2}. The hypothesis (H1) is also used in the proof of Theorem \ref{propo4}.
	\end{remark}
	\begin{remark}\label{dcjcjkecje}[$\sigma$ symplectic] We have assumed $\sigma $ to be orthogonal in our results. In the other case, when $\sigma$ is (irreducible, self dual and) symplectic, then  assuming  non-degeneracy of the height pairing, the equality  $(-1)^{s_p(E_1,\sigma)-s_p(E_2,\sigma)}= \frac{W(E_1/F,\sigma)}{W(E_2/F,\sigma)}$ is trivially true,  by the following results of Greenberg and Rohrlich:
		\par \cite[Prop. 10.2.3]{MR2807791} Let $E/\Q$ be good ordinary at $p$. Assume  that the $p$-adic height pairing on $S_p(E/K)^\vee\otimes_{\Z_p} \Q_p$ is non-degenerate and $S_p(E/K_\cyc)^\vee$ is a finitely generated torsion $\Lambda=\Z_p[[\mathrm{Gal}(K_\cyc/K)]]$ module. Let  $\sigma$ be a self-dual, irreducible representation of $\Delta$. Then $s_p(E, \sigma)\equiv \lambda_E(\sigma) \pmod 2 $. Moreover, if $\sigma$ is assumed to be symplectic, then $\lambda_E(\sigma)$ is even. 
		\par Under the setting of the above paragraph, when $\sigma$ is symplectic, by \cite[Proposition 2]{MR1387669}, we get the global root number $W(E/F,\sigma)=1$. Thus an analogues of Theorem \ref{thm4} is easily obtained.
	\end{remark}
\begin{remark}[The reverse situation]\label{revsit} It may be natural to  change the setting by fixing one elliptic curve $E$ and consider two self dual representation $\rho_1, \rho_2 :\De \lra \mathrm{GL}_n(\mathcal F)$,  such that $\tilde{\rho_1}\cong \tilde{\rho_2}$, where $\tilde{\rho}_i$ is  as defined in \S \ref{sel1}. %Let us denote by $\tilde{\rho}^{ss}_i$, the semi-simplification of $\tilde{\rho}_i$. For each irreducible representation $\sigma$ of $\De$ over $\mathcal F$, let $m_\rho(\sigma)$ denote the multiplicity of $\sigma$ in $\rho$.  Let $I_{sd}=I_{sd}(\De, \mathcal F)$ be the set of irreducible, self dual representation of $\De$ over $\mathcal F$. 
In this setting,  the comparison of  the $p$-parity conjecture for twist of $E$ by $\rho_1$ with that of $E$ by $\rho_2$, has been already established by Greenberg; see \cite[Theorem 3]{MR2807791} for a precise statement.
\end{remark}
\iffalse
\begin{theorem}\cite[Theorem 3]{MR2807791}
 Let $p$ be odd and assume that $E$ has good ordinary reduction at the primes of $F$ lying above $p$, semistable reduction at the
primes of $F$ lying above $2$ or $3$, and that $S_p(E/K_\infty)[p]$ is finite. Suppose that $\rho_1$
and $\rho_2$ are self-dual representations of $\De$ over $\mathcal F$ such  that $\tilde{\rho}_1^{ss} \cong  \tilde{\rho}_2^{ss}$. Then
$$\underset{\sigma\in I_{sd}}{\prod}(-1)^{\lambda_E(\sigma)m_{\rho_1}(\sigma)}  = \underset{\sigma\in I_{sd}}{\prod}W(E/F, \sigma)^{m_{\rho_1}(\sigma)}   \Leftrightarrow \underset{\sigma\in I_{sd}}{\prod}(-1)^{\lambda_E(\sigma)m_{\rho_2}(\sigma)} = \underset{\sigma\in I_{sd}}{\prod}W(E/F, \sigma)^{m_{\rho_2}(\sigma)} . \qed$$ 
\end{theorem}

In particular, assume $\rho_1$ and $\rho_2$  in \cite[Theorem 3]{MR2807791} are irreducible, self-dual, orthogonal representations with  $\tilde{\rho}_1^{ss} \cong  \tilde{\rho}_2^{ss}$. Then the $p$-parity conjecture holds for the twist of $E$ by $\rho_1$ if and only if it holds for  the twist of $E$ by $\rho_2$.
\fi
\subsection{Examples:}\label{examplessubsec}
	We now discuss numerical examples to demonstrate application of our results. %In the first two examples, we consider Artin twist of two congruent mod-$p$ elliptic curves $E_1, E_2$ which fits into Theorem \ref{thm4},  such that the twisted $p$-parity conjecture can be checked numerically for $E_1$ but the same can not checked numerically for $E_2$ using the known results in the literature. Then apply our result to deduce twisted $p$-parity for $E_2$. However 
Note that the computation over a number field $\neq \Q$ (for example, $p$-descent for $p\neq 2$) is difficult and hence it is not easy { to} find explicit numerical examples.  We thank V. Dokchitser for several helpful suggestions.

	\begin{example}\label{example:1}  Let $E_1, E_2$ be the two elliptic curves from LMFDB:
		\begin{small}{\begin{center}
					$E_1=11.a2: y^2 + y = x^{3} -  x^{2} - 7820 x - 263580  $,
				\end{center}
				\begin{center}
					$E_2=737.a1: y^2 + y = x^{3} -  x^{2} + 406 x - 686.$
		\end{center}}\end{small}
		\noindent
Take $p=3$ and $k=F=\Q$.  
%Note $j(E_1)=-1 \cdot 2^{12} \cdot 11^{-1} \cdot 29^{3} \cdot 809^{3}$.
 Note the dimension of  $S_2(\Gamma_0(11)) =1$ and $737=11\cdot 67$, by level lowering (or using SAGE), we can conclude $E_1[3] \cong E_2[3]$ as $G_\Q$-modules.  
%Alternatively,  we can check that $E_1[3] \cong E_2[3]$ as $G_\Q$-modules. 

Now $E_1$ has good reduction at $3$ and from LMFDB, $a_3(f_{E_1})=-1$, where  $f_{E_1}$ is the newform of weight $2$ attached to $E_1$ via modularity. Thus $E_1$ is $3$-ordinary.
 
 Let $K= 10.10.1559914552888693.1$ (marking as   in LMFDB) be the totally real number field of degree $10$ over $\Q$ with discriminant $K=1093^5$,  which { is} Galois over $\Q$ with $\De:=\text{Gal}(K/\Q)\cong D_5$, the dihedral group of order 10. The ideal class group of $K$ is trivial. Note that $\De $ has two $2$-dimensional self dual, orthogonal, irreducible (faithful) representations and we denote by $\sigma$ any one of them.

 %Let $\sigma$ be an irreducible, self dual, orthogonal representation of $\De=\Gal(K/F)  \cong D_6$.  
		
We  check, via Magma, that  $L(E_1/K,1) \neq 0$.  As $K$ is totally real and the analytic rank of $E_1$ over  $K$ is $0$, we can apply the BSD  over $K$ (\cite[Theorem A]{zh}) to conclude, $E_1(K)$ is finite and $\Sha(E_1/K)$ is also finite. Thus $S_3(E_1/K)$ is finite and consequently $s_3(E_1,\sigma)=0$. On the other hand, $E_1$ has bad reduction only at $q=11$ and $\sigma$ is unramified at $11$. Hence  using \cite[Proposition 10]{MR1387669}, we  deduce $W(E_1/\Q, \sigma)=1$. So the $3$-parity conjecture holds for $(E_1,\Q,\sigma)$.  

Using SAGE, we see that $E_1(K)[3]=0$ and in fact, $E_1(K)[3]$ is an irreducible $G_K$ module.  Moreover, as $S_3(E_1/K)$ is finite,  using a control theorem, we deduce $S_3(E_1/K_\cyc)^\vee$ is a torsion $\Z_3[[\Gamma]]$ module. Thus all the condition of our theorem, except that the Iwasawa $\mu$-invariant of $S_3(E_1/K_\cyc)^\vee$ i.e. $\mu(S_3(E_1/K_\cyc)^\vee)\stackrel{?}=0$, are satisfied.  We also observe  in SAGE that the Tamagawa numbers of $E_1$ over $K$ are $3$-adic units. Further,   $3\nmid \#\tilde{E_1}(\mathbb \kappa)$ as well, where $\kappa$ is the residue field of $K$ at a prime dividing $3$.

%Now we discuss the $\mu(S_3(E_1/K_\cyc)^\vee)=0$ condition. First, we note that $\mu(S_3(E_1/\Q_\cyc)^\vee)=0$. Also,  as  $E_1[3]$ is an irreducible $G_K$ module, real period of $E$ coincides with the canonical period (up to a $p$-adic unit) and over the totally real field $K$, the canonical period is given by real period with multiplicity degree of $K$ over $K$. The $p$-adic 

Recall,  $\#\Sha_{an}$  is defined to be the conjectural order of the $\Sha$, as predicted by the exact formula  appearing in the strong form of the BSD conjecture.  Using Magma, we compute that $3 \nmid \#\Sha_{an}(E_1/K) $.  Note that in our setting, strong form of the BSD conjecture is not established yet over the totally real field $K$;  even for elliptic curves  with analytic rank $\leq 1$. Also, over $K$, a totally real Galois field of degree 10 over $\Q$, the one half of the Iwasawa Main Conjecture (extension of Kato's divisibility) has not been proven yet. Assuming $3 \nmid \#\Sha_{an}(E_1/K) \implies 3 \nmid \# \Sha(E_1/K)$, we have  $\Sha(E_1/K)[3] =0$. Hence we get that $S_3(E_1/K)=0$ and  then using control theorem and  Nakayama's lemma, we can deduce that $S_3(E_1/K_\cyc)=0$.

Then $(E_1, E_2,\Q,\sigma) $ satisfy all the conditions of Theorem \ref{thm4} and applying the same theorem, we deduce the $3$-parity conjecture holds for $(E_2,\Q,\sigma)$. We check via SAGE, $W(E_2/\Q, \sigma)=1$. Thus, we conclude from Theorem \ref{thm4} that $s_3(E_2,\sigma) \equiv 0 \pmod 2$. 		
		
On the other hand, we have calculated via Magma,  ord$_{s=1}L(E_2/K,s) \geq 2$. As $3 \nmid \#D_5$, we can not deduce the $3$-parity conjecture for $(E_2, \Q, \sigma)$ using regulator constants or existing results in the literature (cf. \cite{MR2534092},  \cite{dd}). We  summarize the example below. (Also see Remark \ref{stresswithmu}.)
\begin{cor}\label{corlast2}
Let $E_1, E_2, K$ be as above. Assume that $3 \nmid \#\Sha(E_1/K)_{an} \implies  3 \nmid\#\Sha(E_1/K).$ Then $3$-parity conjecture holds for $(E_2,\Q, \sigma)$ with $W(E_2/\Q, \sigma)=1$ and $s_3(E_2,\sigma) \equiv 0 \pmod 2$.
\end{cor}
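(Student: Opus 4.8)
The plan is to check that the data $(E_1, E_2,\, k = F = \Q,\, K,\, \sigma)$ satisfies every hypothesis of Theorem~\ref{thm4} and then simply quote that theorem. First I would pin down the congruence: since $737 = 11\cdot 67$ and $\dim S_2(\Gamma_0(11)) = 1$, level lowering — or, concretely, a \textsf{SAGE} computation of the two mod-$3$ Galois representations — shows $E_1[3]\cong E_2[3]$ as $G_\Q$-modules. Next I would record the structural conditions: $3$ ramifies in $\Q(\mu_3)/\Q$, so the hypothesis on $k=\Q$ holds; $a_3(f_{E_1}) = -1\not\equiv 0\pmod 3$ shows $E_1$ is good ordinary at $3$, and since $3\nmid 737$ and $a_3(E_2)\equiv a_3(E_1)\pmod 3$ the same holds for $E_2$, giving (H1)--(H2); $E_1[3](K) = 0$ (indeed $E_1[3]$ is $G_K$-irreducible), verified in \textsf{SAGE}, is (H3); and $\sigma$ is irreducible, self-dual and orthogonal for $\De\cong D_5$, with $\Q\cap\Q_\cyc = \Q$.

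The one substantive point is (H4), the finiteness of $S_3(E_1/K_\cyc)[3]$, and this is exactly where the hypothesis of the corollary enters. Here I would argue as in the discussion preceding the statement. Since $K$ is totally real and $L(E_1/K,1)\neq 0$ (computed in \textsf{Magma}), the BSD-type theorem of \cite[Theorem~A]{zh} forces $E_1(K)$ and $\Sha(E_1/K)$ to be finite, so $S_3(E_1/K)$ is finite. Feeding in the running hypothesis $3\nmid\#\Sha(E_1/K)_{an}\Rightarrow 3\nmid\#\Sha(E_1/K)$ together with the \textsf{Magma} value $3\nmid\#\Sha(E_1/K)_{an}$ gives $\Sha(E_1/K)[3^\infty] = 0$; combined with $E_1(K)[3] = 0$ and the Kummer sequence $0\to E_1(K)\otimes\Q_3/\Z_3\to S_3(E_1/K)\to\Sha(E_1/K)[3^\infty]\to 0$, this gives $S_3(E_1/K) = 0$. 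Since the Tamagawa numbers of $E_1/K$ are $3$-adic units and $3\nmid\#\widetilde{E}_1(\kappa)$ for $\kappa$ above $3$ (both checked in \textsf{SAGE}), Greenberg's control theorem for the cyclotomic $\Z_3$-extension is exact, so $S_3(E_1/K_\cyc)^\Gamma = 0$; as $S_3(E_1/K_\cyc)^\vee$ is finitely generated over $\La$, topological Nakayama then forces $S_3(E_1/K_\cyc) = 0$, and in particular (H4) holds.

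Once the hypotheses are in place, Theorem~\ref{thm4} gives $(-1)^{s_3(E_1,\sigma)-s_3(E_2,\sigma)} = W(E_1/\Q,\sigma)/W(E_2/\Q,\sigma)$. I would then evaluate both sides for $E_1$: finiteness of $S_3(E_1/K)$ makes $S_3(E_1/K)^\vee$ finite, hence $s_3(E_1,\sigma) = 0$; and $E_1$ has bad reduction only at $11$, which is unramified in $K/\Q$ (the discriminant of $K$ is $1093^5$), so $\sigma$ is unramified at $11$ and \cite[Proposition~10]{MR1387669} gives $W(E_1/\Q,\sigma) = 1$. Therefore $(-1)^{s_3(E_2,\sigma)} = W(E_2/\Q,\sigma)$, which is precisely the $3$-parity conjecture for the twist of $E_2/\Q$ by $\sigma$; and since $W(E_2/\Q,\sigma) = 1$ (computed in \textsf{SAGE}), one concludes $s_3(E_2,\sigma)\equiv 0\pmod 2$.

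I expect the main obstacle to be (H4): one does not expect to prove $S_3(E_1/K_\cyc) = 0$ — equivalently the vanishing of both the $\mu$-invariant and the characteristic ideal of $S_3(E_1/K_\cyc)^\vee$ — unconditionally over the cyclotomic $\Z_3$-tower of a degree-$10$ field, since neither the exact BSD formula nor the relevant divisibility in the Iwasawa Main Conjecture is available for $K$; this is exactly why the displayed $\Sha$-implication is imposed as a hypothesis. Everything else reduces to bookkeeping: the application of Theorem~\ref{thm4}, Rohrlich's local root-number formula for good and multiplicative reduction, and the explicit \textsf{SAGE}/\textsf{Magma} computations of $L$-values, reduction data, and $W(E_2/\Q,\sigma)$.
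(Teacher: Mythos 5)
Your argument is correct and follows essentially the same path as the paper: verify the hypotheses of Theorem~\ref{thm4} for $(E_1,E_2,\Q,K,\sigma)$, use Zhang's theorem plus the $\Sha$-implication to deduce $S_3(E_1/K)=0$, push that up the cyclotomic tower via the control theorem and Nakayama to obtain (H4), and then combine the theorem with $s_3(E_1,\sigma)=0$, $W(E_1/\Q,\sigma)=1$ and the computed $W(E_2/\Q,\sigma)=1$. (One trivial slip: the condition you need is $K\cap\Q_\cyc=\Q$, not $\Q\cap\Q_\cyc=\Q$; this holds since $[K:\Q]=10$ is prime to $3$.)
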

%Now $\Q(\mu_3)_{\cyc}$ being abelian, $S_3(E_1/\Q(\mu_3)_{\cyc})^\vee$ is $\Lambda$-torsion by results of Kato and also it is known that the $\mu$-invariant of $S_3(E_1/\Q(\mu_3)_{\cyc})^\vee$ vanishes and $E_1[p]$ is an irreducible $G_{\Q(\mu_3)}$ module (see for example \cite[\S $3$]{MR3503694}). Note that  $[K:\Q(\mu_3)]=27$ is a power of $3$. Then  $E_1[3]$ is an irreducible $G_K$ module. Moreover applying \cite[Corollary 3.4]{hama}, we conclude that the $S_3(E_1/K_\cyc)^\vee$ is torsion and $\mu$-invariant of $S_3(E_1/K_\cyc)^\vee$ also vanishes. Consequently, $S_3(E_1/K_\cyc)[3]$ is finite and. Then all the conditions of Theorem \ref{thm4}  are satisfied and it follows from Theorem \ref{thm4} that the $p$-parity conjecture in \eqref{sigma-parity-state} holds for the twist of $E_2/F$ by $\sigma$. 
		
		%Notice that   $E_1$ is split multiplicative  at $11$ but {{} $E_2$ has additive but  potentially good reduction at $11$ and  the  $j$-invariant of $E_2$  is integral. Thus the $p$-parity conjecture for the twist of $E_2/F$ by $\sigma$  not covered by the \cite[Theorem 6.13]{dd}. However, as $\Delta \cong S_3$, as an application of  \cite[Example 6.10]{dd}, one can deduce the $p$-parity conjecture for the twist of $E_2/F$ by $\sigma$.} This discussion also shows that the $p$-parity conjecture holds for any elliptic curves $E/F$ twisted by $\sigma$ that satisfies $E[3] \cong E_1[3]$ as $G_\Q$-modules.
\end{example}
\begin{example}\label{example2}	
	Let $E_1, E_2$ be the two elliptic curves as considered in \cite[Page 22]{gv}:
		\begin{small}{\begin{center}
					$E_1=52.a1: y^2 + y = x^{3} + x - 10  $,
				\end{center}
				\begin{center}
					$E_2=364.a1: y^2 + y = x^{3} -584x+5444.$
		\end{center}}\end{small}
		\noindent
Take $p=5$ and $k=F=\Q$.  
%Note $j(E_1)=-1 \cdot 2^{12} \cdot 11^{-1} \cdot 29^{3} \cdot 809^{3}$.
 It is shown in \cite{gv} that  $E_1[5] \cong E_2[5]$ as $G_\Q$-modules and $E_1$ has good, ordinary reduction at $5$.
 
 Let $K= 6.6.16974593.1$ (marking as   in LMFDB) be the totally real number field of degree $6$ with discriminant $K=257^3$,  which { is} Galois over $\Q$ with $\De:=\text{Gal}(K/\Q)\cong S_3$. The ideal class group of $K$ is trivial. Note that $\De $ has a $2$-dimensional self dual, orthogonal, irreducible  representation, say $\sigma$.
 
We  check, via Magma, that  $L(E_1/K,1) \neq 0$.  As $K$ is totally real and the analytic rank of $E_1$ over  $K$ is $0$, we can apply the BSD  over $K$ (\cite[Theorem A]{zh}) to conclude, $E_1(K)$ is finite and $\Sha(E_1/K)$ is also finite. Thus $S_5(E_1/K)$ is finite and consequently $s_5(E_1,\sigma)=0$.  On the other hand, $E_1$ has bad reduction  at $q=2, 13$ and  $\sigma$ is unramified at $13$ and  $2$. Hence again using   \cite[Proposition 10]{MR1387669}, we  deduce $W(E_1/\Q, \sigma)=1$. So the $5$-parity conjecture for the twist of $E_1$ by $\sigma$ over $\Q$  holds  true.
 
Using SAGE, we see that $E_1(K)[5]=0$ and in fact, $E_1(K)[5]$ is an irreducible $G_K$ module.  Moreover, as $S_5(E_1/K)$ is finite, via a control theorem, we deduce $S_5(E_1/K_\cyc)^\vee$ is a torsion $\Z_5[[\Gamma]]$ module. Thus all the condition of our theorem, except $\mu(S_5(E_1/K_\cyc)^\vee)\stackrel{?}=0$, are satisfied.  We also observe  in SAGE that the Tamagawa numbers of $E_1$ over $K$ are $5$-adic units. Further, $5\nmid \#\tilde{E_1}(\mathbb \kappa)$, where $\kappa$ is the residue field of $K$ at a prime dividing $5$. 

%Further, we observe  in SAGE that the Tamagawa numbers of $E_1$ over $K$ are $5$-adic units. Also $\# E_1(K)_{tor} =2$ and $5 \nmid \#\tilde{E_1}(\mathbb \kappa)$, where $\kappa$ is the residue field of $K$ at a prime dividing $5$. {\color{red} If $5 \nmid \Sha(E_1/K)$, then}
 %Let $\sigma$ be an irreducible, self dual, orthogonal representation of $\De=\Gal(K/F)  \cong D_6$.  
 
 %Recall,  $\#\Sha_{an}$  is defined to be the conjectural order of the $\Sha$, as predicted by the exact formula  appearing in the strong form of the BSD conjecture.  
 Using Magma, we have computed that $5 \nmid \#\Sha_{an}(E_1/K) $.  Again, the strong form of BSD conjecture or the  Iwasawa Main Conjecture over $K$ has not been proven yet. Assuming  $5 \nmid \#\Sha_{an}(E_1/K) \implies 5 \nmid \#\Sha(E_1/K)$, we have  $\Sha(E_1/K)[5] =0$. Thus, we obtain $S_5(E_1/K)=0$ and  as in the previous example, using control theorem and  Nakayama's lemma, we  deduce $S_5(E_1/K_\cyc)=0$.

Then $(E_1, E_2,\Q,\sigma) $ satisfy all the conditions of Theorem \ref{thm4} and applying the same theorem, we deduce the $5$-parity conjecture holds for $(E_2,\Q,\sigma)$. Notice that, again  applying \cite[Proposition 10]{MR1387669}, we see that $W(E_2/\Q, \sigma)=-1$. Thus by Theorem \ref{thm4}, we get $s_5(E_2/\Q, \sigma)\equiv 1 \pmod 2$.

On the other hand, we have calculated via Magma,  ord$_{s=1}L(E_2/K,s) = 2$. As $5 \nmid \#S_3$, we can not deduce the $5$-parity conjecture for $(E_2, \Q, \sigma)$ using regulator constants or existing results in the literature (cf. \cite{MR2534092},  \cite{dd}). Thus we have shown:

\begin{cor}\label{corlast}
Let $E_1, E_2, K$ be as above. Assume that $5 \nmid \#\Sha(E_1/K)_{an} \implies  5 \nmid\#\Sha(E_1/K).$ Then $5$-parity conjecture holds for $(E_2,\Q, \sigma)$ with $W(E_2/\Q, \sigma)=-1$ and $s_5(E_2,\sigma) \equiv 1 \pmod 2$.
\end{cor}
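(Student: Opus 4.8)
The plan is to verify that the data $(E_1,E_2,\Q,\sigma)$ satisfies every hypothesis of Theorem~\ref{thm4}, to establish the $5$-parity conjecture for $(E_1,\Q,\sigma)$ unconditionally, to transport it to $(E_2,\Q,\sigma)$ by Theorem~\ref{thm4}, and finally to read off the parity of $s_5(E_2,\sigma)$ from a global root-number computation. Throughout, $E_1=52.a1$, $E_2=364.a1$, $p=5$, $k=F=\Q$, $K=6.6.16974593.1$ is the totally real $S_3$-field of discriminant $257^3$, and $\sigma$ is the $2$-dimensional absolutely irreducible representation of $\De=\Gal(K/\Q)\cong S_3$, which is orthogonal and self-dual (as recorded in Example~\ref{example2}).

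First I would record the structural inputs. From \cite{gv}, $E_1[5]\cong E_2[5]$ as $G_\Q$-modules and $E_1$ has good ordinary reduction at $5$; hence $E_2$ is good at $5$ (as $5\nmid 364$) and, being congruent to $E_1$, ordinary there too. The running hypothesis on $k$ holds trivially for $k=\Q$, $p=5$, and $K\cap\Q_\cyc=\Q$ because $[K:\Q]=6$ is prime to $p$ while $\Q_\cyc/\Q$ is pro-$5$. For $E_1$: a Magma computation gives $L(E_1/K,1)\neq 0$, so, since $K$ is totally real and the analytic rank is $0$, \cite[Theorem~A]{zh} forces $E_1(K)$ and $\Sha(E_1/K)$ to be finite; hence $S_5(E_1/K)$ is finite and $s_5(E_1,\sigma)=0$. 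Since $E_1$ has bad reduction only at $2$ and $13$, where $\sigma$ is unramified (all primes $\neq 257$ are unramified in $K/\Q$), Rohrlich's formula \cite[Proposition~10]{MR1387669} gives $W(E_1/\Q,\sigma)=1=(-1)^{s_5(E_1,\sigma)}$: the $5$-parity conjecture holds for $(E_1,\Q,\sigma)$.

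Next I would check hypothesis~\textbf{(H)} for $(E_1,E_2)$. Parts (H1) and (H2) hold by the previous paragraph; (H3), namely $E_1[5](K)=0$, is a SAGE computation. For (H4) the conditional assumption is used: a Magma computation gives $5\nmid\#\Sha(E_1/K)_{an}$, so under the stated implication $5\nmid\#\Sha(E_1/K)$, whence $\Sha(E_1/K)[5]=0$ and, together with $E_1(K)[5]=0$, $S_5(E_1/K)=0$; a control theorem and Nakayama's lemma then force $S_5(E_1/K_\cyc)=0$, in particular $S_5(E_1/K_\cyc)[5]$ is finite. Thus Theorem~\ref{thm4} applies to $(E_1,E_2,\Q,\sigma)$, and by its ``in particular'' clause the $5$-parity conjecture holds for $(E_1,\Q,\sigma)$ if and only if it holds for $(E_2,\Q,\sigma)$; since it holds for $E_1$, it holds for $E_2$. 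Finally, $E_2=364.a1$ has bad reduction only at $2,7,13$, all unramified in $K/\Q$, so \cite[Proposition~10]{MR1387669} computes $W(E_2/\Q,\sigma)=-1$; the $5$-parity conjecture for $(E_2,\Q,\sigma)$ then reads $(-1)^{s_5(E_2,\sigma)}=-1$, i.e.\ $s_5(E_2,\sigma)\equiv 1\pmod 2$.

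The only step that is neither a citation nor a finite computation is the verification of \textbf{(H4)}: over the degree-$6$ field $K$ neither the strong BSD formula nor the relevant divisibility in the Iwasawa main conjecture is currently available, so the finiteness of $S_5(E_1/K_\cyc)[5]$ cannot be obtained unconditionally; this is precisely what the hypothesis $5\nmid\#\Sha(E_1/K)_{an}\implies 5\nmid\#\Sha(E_1/K)$ is inserted to supply, via the vanishing $S_5(E_1/K)=0$ and a control argument. Everything else rests on citations (the congruence \cite{gv}, BSD in analytic rank $0$ over totally real fields \cite{zh}, Rohrlich's root-number formula \cite{MR1387669}, and Theorem~\ref{thm4}) and explicit Magma/SAGE computations ($L$-values, torsion, the analytic order of $\Sha$, and local root numbers).
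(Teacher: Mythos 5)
Your argument matches the paper's Example~\ref{example2} essentially step for step: unconditional $5$-parity for $(E_1,\Q,\sigma)$ via Zhang's theorem plus Rohrlich's root-number formula, verification of hypothesis \textbf{(H)} with the conditional $\Sha$ assumption exactly covering \textbf{(H4)} through the chain $\Sha(E_1/K)[5]=0 \Rightarrow S_5(E_1/K)=0 \Rightarrow S_5(E_1/K_\cyc)=0$, transport via Theorem~\ref{thm4}, and the Magma/Rohrlich computation $W(E_2/\Q,\sigma)=-1$. The small additions you make (explicitly noting $K\cap\Q_\cyc=\Q$, and that $E_2$ is good ordinary at $5$ because $5\nmid 364$ and the curves are congruent) are correct and only make explicit what the paper leaves implicit.
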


\begin{remark}\label{stresswithmu}
We stress that the condition $p \nmid  \#\Sha(E/K)_{an} \implies p \nmid \#\Sha(E/K)$ in Examples \ref{example:1} and \ref{example2} is only used to check the vanishing of  $\mu(S_p(E/K_\cyc)^\vee)$.  According to the  Iwasawa main conjecture for $E$ over a totally real field $K$, the characteristic ideal of $S_p(E/K_\cyc)^\vee$ is given by certain $p$-adic $L$-function of $E$ over $K$, say $\mathcal L_p(E/K)$, which depends on the choice of certain canonical period. In the above two examples, from the $p$-adic BSD formula, one has $\mathcal L_p(E/K)$ is a unit in the Iwasawa algebra $\La$. Let $\mu_{an}$ be defined by $p^{\mu_{an}} \mid \mid \mathcal L_p(E/K)$ in $\La$. Hence the above condition in Corollaries \ref{corlast2} and \ref{corlast} involving  $\#\Sha(E/K)_{an}$ and $\#\Sha(E/K)$ can be replaced by the following condition: 
\begin{equation}\label{lastpakka}
\mu_{an} = 0 \implies \mu(S_p(E/K_\cyc)^\vee)=0.
\end{equation} 
Of course, one half of  the Iwasawa main conjecture of $E$ over $K_\cyc$ stating the characteristic ideal char$_\La(S_p(E/K_\cyc)^\vee) \mid \mathcal L_p(E/K)$ is proved, then the  condition in \eqref{lastpakka} will be satisfied.

There are recent works of \cite{lo}, \cite{hw}, where they have shown in the setting of Hilbert modular forms of parallel weight, the characteristic ideal of  Selmer group over anticyclotomic $\Z_p$ extension divides the corresponding anticylotomic $p$-adic $L$-function.    The condition $p \nmid \#\Sha(E/K)_{an} \implies  p \nmid\#\Sha(E/K)$ can be deduced  from this divisibility.  However, these works are conditional on a form of Ihara's Lemma. The  paper  on arXiv \cite{xi} has removed this dependence on Ihara's Lemma. % Combining works of \cite{xi} and \cite{hw},  $5 \nmid \#\Sha(E/K)_{an} \implies  5 \nmid\#\Sha(E/K)$ for $E=52a1$ can be checked. 
The relation between  $\#\Sha(E/\Q)_{an} $ and  $ \#\Sha(E/\Q)$ is also  discussed in  \cite{bht}.

In any case, as $E_i[p]$ is an irreducible $G_K$-module, it is always expected that  the $\mu(S_p(E_i/K_\cyc)^\vee)=0$ (see discussions following \cite[Theorem 1]{MR2807791}). 
\end{remark}

\begin{remark}\label{infinitelymanyexamples} By results of Rubin-Silverberg \cite{rs}, given an elliptic curve $E_1/\Q$ and for $p \in \{3,5\}$,  there are infinitely many elliptic curves $E/\Q$ such that $E[p] \cong E_1[p]$ as $G_\Q$ modules. Thus, under the assumption on $\#\Sha(E_1/K)[p]$,  Example \ref{example:1} (respectively Example \ref{example2}) shows, there are  infinitely many elliptic curves $E/\Q$ such that the $3$-parity (respectively $5$-parity) conjecture is true for each of $(E,K,\sigma)$. In fact, there are similar phenomenon of congruences for elliptic curves for prime $p=7,11,13$ (cf. \cite{fi}).
\end{remark}
		 %Now $\Q(\mu_3)_{\cyc}$ being abelian, $S_3(E_1/\Q(\mu_3)_{\cyc})^\vee$ is $\Lambda$-torsion by results of Kato and also it is known that the $\mu$-invariant of $S_3(E_1/\Q(\mu_3)_{\cyc})^\vee$ vanishes and $E_1[p]$ is an irreducible $G_{\Q(\mu_3)}$ module (see for example \cite[\S $3$]{MR3503694}). Note that  $[K:\Q(\mu_3)]=27$ is a power of $3$. Then  $E_1[3]$ is an irreducible $G_K$ module. Moreover applying \cite[Corollary 3.4]{hama}, we conclude that the $S_3(E_1/K_\cyc)^\vee$ is torsion and $\mu$-invariant of $S_3(E_1/K_\cyc)^\vee$ also vanishes. Consequently, $S_3(E_1/K_\cyc)[3]$ is finite and. Then all the conditions of Theorem \ref{thm4}  are satisfied and it follows from Theorem \ref{thm4} that the $p$-parity conjecture in \eqref{sigma-parity-state} holds for the twist of $E_2/F$ by $\sigma$. 
		
		%Notice that   $E_1$ is split multiplicative  at $11$ but {{} $E_2$ has additive but  potentially good reduction at $11$ and  the  $j$-invariant of $E_2$  is integral. Thus the $p$-parity conjecture for the twist of $E_2/F$ by $\sigma$  not covered by the \cite[Theorem 6.13]{dd}. However, as $\Delta \cong S_3$, as an application of  \cite[Example 6.10]{dd}, one can deduce the $p$-parity conjecture for the twist of $E_2/F$ by $\sigma$.} This discussion also shows that the $p$-parity conjecture holds for any elliptic curves $E/F$ twisted by $\sigma$ that satisfies $E[3] \cong E_1[3]$ as $G_\Q$-modules.
\end{example}
Now, we discuss an example where all of our assumptions in Theorem \ref{thm4} (including $\mu=0$)  are (unconditionally) satisfied. 
\begin{example}\label{example:2}  Let $E_1, E_2$ be the two elliptic curves from LMFDB:
		\begin{small}{\begin{center}
					$E_1=56.b1: y^2=x^3-x^2-4$,
				\end{center}
				\begin{center}
					$E_2=392.c1: y^2=x^3-x^2-16x+29$.
		\end{center}}\end{small}
	\end{example}
	\noindent 
	Take $p=3$ and $k=\Q$.  Note that $j(E_1)=-2^{2}\cdot 7^{-1}$. Using SAGE, we check that $E_1[3] \cong E_2[3]$ as $G_\Q$-modules.  Let $\zeta_{19}$ denote a primitive $19$-th roots of unity. Set $F:=\Q(\zeta_{19}+\zeta_{19}^{-1})$  and %be the maximal totally real subfield of $\Q(\zeta_{19})$. 
		let $K:=F(\mu_3,m^{1/3})$, where $m$ is any $3$-power free integer.  Let $\sigma$ be an irreducible, self dual, orthogonal representation of $\De=\Gal(K/F)  \cong S_3$. 
		
		 We will now show that all the conditions of Theorem \ref{thm4} are satisfied for $(E_1,E_2, F, \sigma)$ and we will compute the parity of $s_p(E_1,\sigma) -s_p(E_2,\sigma)$ using our results. %As all the conditions of \cite[Theorem 6.13]{dd} are satisfied for $(E_1,F,\sigma)$, we conclude the $p$-parity conjecture in \eqref{sigma-parity-state} holds for the twist of $E_1/F$ by $\sigma$.
	
	We check from LMFDB that $E_1$ is $3$-ordinary. Now $\Q(\mu_3)_{\cyc} =\Q(\mu_{3^\infty})$ being abelian, $S_3(E_1/\Q(\mu_{3^\infty}))^\vee$ is $\Lambda$-torsion by results of Kato. Also, from \cite[Table B.1]{ddplms} we notice that the  $\mu(S_3(E_1/\Q(\mu_{3^\infty})^\vee)=0$. As, $[K_\cyc: \Q(\mu_{3^\infty})] $ is a power of $3$, the $\mu$-invariant of $E_1$ over $K_\cyc$ also vanishes by \cite[Corollary 3.4]{hama}). Further, from the same table we have $E_1(\Q(\mu_3))[3]=0$. As $[K:\Q(\mu_3)]=27$, we get $E_1(K)[3]=0$ as well.  Then all the conditions of Theorem \ref{thm4} are satisfied. %and it also follows from Theorem \ref{thm4} that the $3$-parity conjecture in \eqref{sigma-parity-state} holds for the twist of $E_2/F$ by $\sigma$.    
	
	The $3$-parity conjecture  for $(E_1,F, \sigma)$ holds by \cite[Theorem 6.13]{dd}. Then applying Theorem \ref{thm4}, we can deduce $3$-parity for $(E_2, F, \sigma)$. However, as $\Delta \cong S_3$ and $p=3$, the $3$-parity conjecture for both $(E_1, F, \sigma ) $ and $(E_2,F,\sigma)$ also  follows from   \cite[Example 6.10]{dd}, using the results  of \cite{dd} on regulator constants.  We  mention that  the $p$-parity conjecture for the twist of $E_2/F$ by $\sigma$ does not follow from \cite[Theorem 6.13]{dd} or  \cite[Theorem 1.3]{MR2534092}.

	This discussion also shows that the $3$-parity conjecture holds for any elliptic curve $E/F$ twisted by $\sigma$ that satisfies $E[3] \cong E_1[3]$ as $G_\Q$-modules.
	
	Now we determine $s_3(E_1,\sigma)- s_3(E_2, \sigma) \pmod 2.$ First, note that the primes $v$ of $F$  dividing $7 $ belong to $\Sigma_0$ and for such a $v$, we have $\delta_{E_1,v} - \delta_{E_2,v} \equiv \langle \sigma_v, 1_v\rangle \pmod 2$ by Lemma \ref{Lem1}
	(ii).   For a prime $v$ of $F$ dividing $2$, $E_1, E_2$ have potentially good, additive reduction.  By Lemma \ref{image}, for $K=F(\mu_3, m^{1/3})$, a prime of $F$ lying above $2$, belongs to $\Sigma_0$ if and only if $m$ is even.  When $v$ in $F$ dividing $2 $ is in $\Sigma_0$,  by Lemma \ref{lem5}, parts (iv), (v) and (vi), $\delta_{E_1,v} - \delta_{E_2,v} \equiv 0 \pmod 2$.   Thus, from the table or Theorem \ref{thm1}, we deduce $s_3(E_1,\sigma)- s_3(E_2, \sigma) \equiv \underset{v\mid 7 \text{ in } F }{\sum}\langle \sigma_v, 1_v\rangle  \pmod 2$. Let us take $u\mid v \mid 7$ where $u, v$ respectively are primes in $K$ and $F$. Since $\mu_3 \subset F_v$, we see that Gal$(K_u/F_v)$ is trivial or has order $3$. If $K_u =F_v$, then $\langle \sigma_v, 1_v\rangle =2$. On the other hand if $[K_u: F_v]=3$, it is easy to see that $\langle \sigma_v, 1_v\rangle  =0$. 
	
	Thus from the above discussion,  for any $K=F(\mu_3, m^{1/3})$, we always get $s_3(E_1,\sigma) \equiv s_3(E_2, \sigma) \pmod 2.$  Similarly, we can deduce $W(E_1/F,\sigma) \equiv W(E_2/F, \sigma).$ 
	%{{} \begin{remark}
	%Although the $p$-parity conjecture for the twist of $E_2/F$ by $\sigma$ (for Examples \ref{example:1} and \ref{example:2}) is not covered by \cite[Theorem 6.13]{dd}, we mention that the same may follow from Dokchitsers’ another results (see \cite[Corollary 6.8 \& Example 6.10]{dd}). In Corollary 6.8 of {\it loc. cit.} there is a condition about $\sigma_v$ to hold at primes $v \mid 6$ where $E_2$ has additive reduction; although it is expected this corollary to hold irrespectively of the reduction type of elliptic curves, see Remark 6.9 of {\it loc. cit.}.
%\end{remark}}
\section{relation with the theory of `arithmetic local constants'}\label{compmrnjms}
	Mazur-Rubin have developed the theory of `arithmetic local constants' in \cite{mr} which has been used by Nekov\'a\v{r} \cite{ne}. We explain the relation of our method with the theory of arithmetic local constant{{}s}. For simplicity, we consider in this section the special case where $F=K$ and hence $\sigma$ is trivial. However, we remark  about the general case with $\sigma$ non-trivial at the end. 
	\par Let $E_1, E_2$ be two elliptic curves over a number field $F$.  
	We say $E_1[p] \cong E_2[p]$ as   symplectic $G_F$ modules if the $G_F$ linear isomorphism $E_1[p] \cong E_2[p]$  is compatible with the natural, symplectic Weil pairing on $E_i[p]$. Throughout this section, we keep the following hypothesis:\\
	{\bf (Syl)}:   $E_1[p] \cong E_2[p]$ as   symplectic $G_F$ modules.\\
	Under the Hypothesis  (Syl), using Mazur-Rubin's theory of arithmetic local constants, Nekov\'a\v{r} \cite{ne} has expressed the both (i) the ratio of {{}S}elmer corank of $E_1, E_2$ and (ii) the ratio of the local root numbers of $E_1$ and $E_2$ at a finite prime  $v \nmid p$ of $F$ in terms of the arithmetic local constant $d(\mathcal F_v^1, \mathcal F_v^2)$. To state his results more precisely, we need the following definitions.

	For a finite prime $v \nmid p$ of $F$, let  $I_v$ denote the  inertia subgroup of $G_{F_v}$ at $v$. Note that there is  a natural inclusion map from $T_pE_i \lra V_pE_i:= T_pE_i \otimes \Q_p$ and a natural quotient map $T_pE_i \lra \frac{T_pE_i}{pT_pE_i}\cong E_i[p]$. These map induce corresponding   maps on cohomology. Put  (see \cite[\S 2]{ne})
	\begin{small}{
			\begin{equation}\label{apenh1f}
			H^1_f(T_pE_i/F_v) :=\mathrm{ker}\Big(H^1(F_v, T_pE_i) \rightarrow \frac{H^1(F_v, V_pE_i)}{H^1(G_{F_v}/{I_v}, V_pE_i^{I_v})}\Big) \,\, \text{and } \,\,
			\mathcal F_v^i:= \mathrm{Image}\Big(H^1_f(T_pE_i/F_v) \rightarrow H^1(F_v, E_i[p])\Big).
			\end{equation}}\end{small}
	Then $\mathcal F_v^i$ is a finite dimensional $\mathbb F_p$ vector space and  is a Lagrangian subspace of $H^1(F_v, E_1[p]) \cong H^1(F_v, E_2[p])$. For a prime $v \mid p$ of $F$, we omit the definition of $\mathcal F_v^i$ (see \cite[\S 3.2]{ne}). Given any two Lagrangian subspace{{}s} $ \mathcal F_v^1, \mathcal F_v^2$ at a  finite prime $v $ of $F$, define  the arithmetic local constant,
	\begin{small}{\begin{equation}\label{apenalc}
			d(\mathcal F_v^1,\mathcal F_v^2) := \mathrm{dim}_{\mathbb F_p} \frac{\mathcal F_v^1}{\mathcal F_v^1\cap \mathcal F_v^2} \pmod 2=\mathrm{dim}_{\mathbb F_p} \frac{\mathcal F_v^2}{\mathcal F_v^1\cap \mathcal F_v^2} \pmod 2 \in \Z/{2\Z}
			\end{equation}}\end{small}
			
\noindent Recall the following exact  sequence induced by  Kummer theory via the multiplication by $p$ map on $E_i$: 
	\begin{small}{
			\begin{equation}\label{apenkum1}
			0 \lra \frac{E_i(F_v)}{pE_i(F_v)} \stackrel{\kappa^i_v}\lra H^1(F_v, E_i[p]) \lra H^1(F_v, E_i[p^\infty])[p] \lra 0 
			\end{equation} 
	}\end{small}
	For elliptic curves, at $v \nmid p$, $H^1(G_{F_v}/{I_v}, V_pE_i^{I_v})$ always vanishes. Using this, it is easy to see that 
	\begin{small}{\begin{equation}\label{apen39}
			\mathcal F_v^i =  \text{Image}(\kappa^i_v) \cong \frac{E_i(F_v)}{pE_i(F_v)}.
			\end{equation}
	}\end{small} 
\par Assume the  condition (Syl) holds. Then the setting of \cite[\S 2.16]{ne} applies and we deduce from \cite[Theorem 2.17]{ne} that for a finite prime $v \nmid p$ of $F$,
	\begin{small}{\begin{equation}\label{apenrootalc}
			\frac{W(E_1/F_v)}{W(E_2/F_v)}= (-1)^{d(\mathcal F_v^1,\mathcal F_v^2)}.
			\end{equation}}\end{small}
	On the other hand, assume (Syl) and in addition, also assume $E_i[p](F)=0$. Then it follows from \cite[Theorem 1.4]{mr}, \cite[Equ. (4.6.2)]{ne} that 
	\begin{small}{\begin{equation}\label{apenalgbe}
			s_p(E_1) - s_p(E_2) = \underset{v \mid pN^F_{E_1}N^F_{E_2}}{\sum} d(\mathcal F_v^1,\mathcal F_v^2) \pmod2.
			\end{equation}}\end{small}
\begin{remark}\label{remarklast1}We stress that we do not assume this (Syl) hypothesis in Theorem \ref{thm4}. Even if $E_1$ and $E_2$ are sympletically isomorphic i.e. they satisfy (Syl), the analogue of \eqref{apenrootalc} at a prime $v \mid p$ of $F$ is not clear and the situation is more complicated (see \cite[\S 3]{ne} and his subsequent works). \end{remark}

	When $\sigma$ is trivial, we denote $\de_{E_i,v}:=\de_{E_i,v}(\sigma)$, where $\de_{E,v}(\sigma)$ is defined in \S \ref{sel1}.  %\begin{remark}
	In Theorem \ref{propo4}, we have expressed the ratio of local root numbers $\frac{W(E_1/F_v)}{W(E_2/F_v)}$ in terms of $\delta_{E_1,v} -\delta_{E_2,v} \pmod2$.  On the other hand,    Nekov\'a\v{r} have expressed (in \eqref{apenrootalc})  the same ratio $\frac{W(E_1/F_v)}{W(E_2/F_v)}$, $v\nmid p$ in terms of $d(\mathcal F_{v}^1, \mathcal F_{v}^2)$.  
	Now we will show directly  in Proposition \ref{apencomppro}, without invoking any results of Nekovar or of this article, that under (Syl), at $v \nmid p$ of $F$, the difference of our local Iwasawa invariant $
	\delta_{E_1,v} -\delta_{E_2,v} = d(\mathcal F_{v}^1, \mathcal F_{v}^2) \pmod 2.$ This `completes the triangle' and shows that our methods are compatible with that of Nekov\'a\v{r}.   However to prove Proposition \ref{apencomppro}, we first need the following lemma:
	\begin{lemma}\label{lastlemma}
		Let $E$ be an elliptic curve over $F$ and  $v$ be a  finite prime of $F$ such that $v \nmid p$. Then the natural map $\frac{E(F_{v})}{pE(F_{v})} \lra \frac{E(F_{\cyc,v})}{pE(F_{\cyc,v})}$ is surjective.
	\end{lemma}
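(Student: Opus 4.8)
Since $v\nmid p$, the extension $F_{\cyc,v}/F_v$ is unramified; it is either trivial, in which case there is nothing to prove, or a $\Z_p$-extension with $\Gamma:=\Gal(F_{\cyc,v}/F_v)\cong\Z_p$. Write $k$ and $k_\infty$ for the residue fields of $F_v$ and $F_{\cyc,v}$, so $k_\infty=\bigcup_n k_n$ with $[k_n:k]=p^n$. The plan is to compute $E(F_{\cyc,v})/p$ via the filtration coming from the N\'eron model: for $L\in\{F_v,F_{\cyc,v}\}$ there are compatible exact sequences
\begin{equation*}
0\to \widehat{E}(\mathfrak m_L)\to E_0(L)\to \widetilde{E}^{\mathrm{ns}}(k_L)\to 0,\qquad 0\to E_0(L)\to E(L)\to \Phi(k_L)\to 0,
\end{equation*}
where $\widehat E$ denotes the formal group, $\widetilde E^{\mathrm{ns}}$ the smooth part of the reduction, and $\Phi$ the group of components.

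First I would show that $E_0(F_{\cyc,v})$ is $p$-divisible. Since $p$ is a unit in $O_{F_{\cyc,v}}$, multiplication by $p$ is an automorphism of the formal group, so $\widehat E(\mathfrak m_{F_{\cyc,v}})$ is uniquely $p$-divisible, and it suffices to check that $\widetilde E^{\mathrm{ns}}(k_\infty)/p=0$. I would do this by reduction type. If $E$ has additive reduction at $v$, then $\widetilde E^{\mathrm{ns}}(k_\infty)\cong\mathbb{G}_a(k_\infty)$ is an $\mathbb{F}_\ell$-vector space (where $v\mid\ell$), hence uniquely $p$-divisible. If $E$ has multiplicative reduction, then $\widetilde E^{\mathrm{ns}}(k_\infty)$ is $k_\infty^\times$ in the split case and the norm-one subgroup of the associated quadratic extension in the non-split case; this is a torsion group whose $p$-primary part is $\mu_{p^\infty}(k_\infty)$, resp.\ its norm-one analogue, which is either trivial or all of $\mu_{p^\infty}$ and so is $p$-divisible. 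If $E$ has good reduction, let $\varphi$ be a lift of the $q$-power arithmetic Frobenius acting on $T_p\widetilde E$, with eigenvalues $\alpha,\beta\in\overline{\Z}_p^{\times}$; then $\widetilde E(k_\infty)[p^\infty]$ is the union over $n$ of the kernels of $\varphi^{p^n}-1$ on $\widetilde E(\overline k)[p^\infty]$. Using that $v_p(\alpha^{p^n}-1)$ stays $0$ when $\overline\alpha\neq1$ and tends to $\infty$ when $\overline\alpha=1$ (lifting the exponent, $p$ odd), an elementary Smith-normal-form computation shows that this union is $0$ when $\overline\alpha\neq1\neq\overline\beta$, and is isomorphic to $\Q_p/\Z_p$ or $(\Q_p/\Z_p)^2$ otherwise; in every case it is $p$-divisible, so $\widetilde E(k_\infty)/p=0$. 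Therefore the second exact sequence gives a natural isomorphism $E(F_{\cyc,v})/p\xrightarrow{\sim}\Phi(k_\infty)/p$.

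Next I would show that the inclusion $\Phi(k)\hookrightarrow\Phi(k_\infty)$ induces an isomorphism on $p$-primary parts, hence $\Phi(k)/p\xrightarrow{\sim}\Phi(k_\infty)/p$. Indeed $\Phi$ is a finite \'etale $k$-group scheme, and $\Phi(\overline k)[p^\infty]$ is nontrivial only for a few Kodaira types --- multiplicative type $I_n$ with $p\mid n$ (split or non-split), and, when $p=3$, the additive types $IV$ and $IV^{*}$ with $\Phi\cong\Z/3$ --- in each of which the $q$-power Frobenius acts on $\Phi(\overline k)[p^\infty]$ by $\pm1$, an automorphism of order prime to $p$ since $p$ is odd. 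Consequently $\Phi(\overline k)[p^\infty]^{\Gamma}$ is already attained over $k$, i.e.\ $\Phi(k)[p^\infty]=\Phi(k_\infty)[p^\infty]$, giving the claim.

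Finally, by functoriality of the N\'eron model, the natural map $E(F_v)/p\to E(F_{\cyc,v})/p$ is compatible with the reduction maps onto the component groups; combining this with the two isomorphisms above and with the surjectivity of $E(F_v)\twoheadrightarrow\Phi(k)$, a short diagram chase shows that $E(F_v)/p\to E(F_{\cyc,v})/p$ is surjective, as required. The two delicate inputs are the $p$-divisibility of $\widetilde E(k_\infty)$ in the good-reduction case and the stabilisation of the $p$-part of the component group; both are elementary, but require the case analysis indicated above.
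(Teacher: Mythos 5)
Your argument is correct, but it follows a genuinely different route from the one in the paper. You organize the computation around the N\'eron model filtration $0\to E_0(L)\to E(L)\to\Phi(k_L)\to 0$: after checking, case by case on the reduction type, that $E_0(F_{\cyc,v})$ is $p$-divisible (the formal group is uniquely $p$-divisible because $p$ is a unit in $\mathcal{O}_{F_{\cyc,v}}$, and the smooth part of the special fibre contributes nothing after $\otimes\,\F_p$), all of the content funnels through the component group and becomes the stability statement $\Phi(k)[p^\infty]=\Phi(k_\infty)[p^\infty]$, which you dispose of uniformly by observing that Frobenius acts on $\Phi(\overline k)[p^\infty]$ through a group of order prime to $p$. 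The paper's proof instead passes through the identification $E(L)/pE(L)\cong E(L)[p^\infty]/pE(L)[p^\infty]$ and compares torsion along the unramified tower $F_v\subset F_{v_n}\subset F_{\cyc,v}$: it first shows $E[p^\infty](F_{\cyc,v})$ is $p$-divisible or zero in the good, additive-$p\geq 5$, and non-split-with-$\mu_p$ cases, and in the remaining cases (additive with $p=3$, multiplicative with $\mu_p\not\subset F_v$, split multiplicative with $\mu_p\subset F_v$) tracks $\dim_{\F_p}E[p]$ via the shape of $\rho_E|_{G_{F_v}}$ and, in the last case, the Tate parameter. Your route buys uniformity and avoids the reduction-type bookkeeping at the finite layers, at the cost of invoking the N\'eron model and Lang's theorem for the surjectivity $E(F_v)\twoheadrightarrow\Phi(k)$. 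One small simplification available to you: in the good-reduction subcase the Smith-normal-form discussion can be replaced by the one-line observation that $\Gal(\overline k/k_\infty)\cong\prod_{\ell\neq p}\Z_\ell$ has supernatural order prime to $p$, so $H^1(\Gal(\overline k/k_\infty),\widetilde E[p])=0$ and hence $\widetilde E[p^\infty](k_\infty)$ is automatically $p$-divisible, sidestepping any issue of whether $\varphi$ diagonalizes over an integral lattice.
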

		\proof We prove this by considering the reduction type of $E$ at $v$. If $E$ has good reduction at $v \nmid p$, then $E[p^\infty](F_{\cyc,v})$ is $p$-divisible and hence $\frac{E(F_{\cyc,v})}{pE(F_{\cyc,v})} \cong \frac{E[p^\infty](F_{\cyc,v})}{pE[p^\infty](F_{\cyc,v})} =0$ and  we get the required surjectivity trivially. Next, if $E$ has additive reduction at $v$ with $p \geq 5$, then consider $F_v(\mu_p)$, an unramified extension of $F_v$. Then  $E$ continues to be  additive over $F_v(\mu_p)$ and by \cite[Prop. 5.1(iii)]{hama}, $E[p^\infty](F_v(\mu_{p^\infty}))=0$ and hence $E[p^\infty](F_{\cyc,v})=0$. On the other hand, if $\mu_p \subset F_v$ and $E$ has non-split multiplicative reduction at $v$, then $p$ being odd, $E$ continues to have non-split multiplicative reduction over $F_{\cyc,v}$ and  again by  \cite[Prop. 5.1(iii)]{hama}, we get  $E[p^\infty](F_{\cyc,v})=0$.

		Let $F_v=F_{v_0} \subset F_{v_n}  ... \subset F_{\cyc,w}$ be the layers of the (unramified) cyclotomic $\Z_p$ extension of $F$ with $\mathrm{Gal}(F_{v_n}/F_v) \cong \Z/{p^n\Z}$.   To deal with the remaining cases, we make a couple of  observations, to be used.  First, as $\frac{E(F_{\cyc,v})}{pE(F_{\cyc,v})}  $ is finite, the natural map  from $\frac{E(F_{v_n})}{pE(F_{v_n})} {\lra} \frac{E(F_{\cyc,v})}{pE(F_{\cyc,v})}$ is surjective for some sufficiently large $n$. Thus the required surjectivity in the lemma will follow if  we show the surjectivity of $ \frac{E(F_{v})}{pE(F_{v})} \stackrel{b_E}{\lra} \frac{E(F_{v_n})}{pE(F_{v_n})} $ for any $n$. Secondly, via a diagram chase using the multiplication by $p$ map, respectively on $E(F_v)$ and $E(F_{v_n})$,  we observe that the map $b_E$ is surjective if and only if  $\text{dim}_{\F_p} ~ E[p](F_v) = \text{dim}_{\F_p} ~ E[p](F_{v_n})$. In particular, if $E[p](F_v) =0$ then by Nakayama's Lemma,   $E[p](F_{v_n})=0$ as well. Thus we only need to consider the cases when $E[p](F_v) \neq 0$.
		
		If $E$ has additive reduction over $F_v$ then $E[p](F_v) \neq 0$ can happen only when $p=3$. So we consider $p=3$ and $E$ has additive reduction at  $v$. Then recall $E$ attains good reduction over $F_v(E[3])$ \cite[Corollaries 2(b) \& 3]{seta}.  Hence using $F_{v_n}/F_v$ is unramified, we conclude that $\text{dim}_{\F_p} ~ E[p](L) = 2$ is not possible for $L \in \{F_v, F_{v_n}\}$. Hence if $\text{dim}_{\F_p} ~ E[p](F_v) = 1$ then $\text{dim}_{\F_p} ~ E[p](F_{v_n}) = 1$ as well. 
		
		Next, we assume that $\mu_p \not\subset F_v$ and $E$ has  split (respectively non-split) reduction at $v$. Then recall from \S \ref{sel1}, the diagonal entries of $\rho_E\mid_{G_{F_v}}$ are  $\{\omega_v, 1\}$ (respectively $\{\omega_v\varkappa_v, \varkappa_v\}$; where $\varkappa_v$ is a unramified quadratic  character).  Keeping in mind that $F_{v_n}/F_v$ is a $p$-power extension,  $\chi\mid_{G_{F_v}} =1 \Leftrightarrow \chi\mid_{G_{F_{v_n}}} =1$,  where $\chi$ represents any of the above diagonal characters. Thus we see, $\text{dim}_{\F_p} ~ E[p](F_{v_n}) \leq 1$ and it is the same as $\text{dim}_{\F_p} ~ E[p](F_{v}).$ 
		
		Finally, we consider the case where $\mu_p  \subset F_v$ and $E$ is split-multiplicative at $v$. Recall  (\S \ref{sel1}), \begin{small}{$ \rho_{E}\mid_{G_{F_v}} \sim \begin{pmatrix}1 & * \\0 & 1\end{pmatrix}$}\end{small}. If $* \neq 0$ for $\rho_{E}\mid_{G_{F_v}}$ but $*$ becomes $0$  for $\rho_{E}\mid_{G_{F_{v_n}}}$ then some $p$-power root of the Tate parameter $q$ (\cite[Prop 5.1(ii)]{hama}) does not belong to $F_v$ but it is in $F_{v_n}$. However, then $F_{v_n}/F_v$ will be a ramified Kummer extension, which is a contradiction. Thus $* \neq 0$ over $F_v$ if and only if $* \neq 0$ over $F_{v_n}$.  Hence if $\text{dim}_{\F_p} ~ E[p](F_{v}) = 1$  (when $* \neq 0$), then  $\text{dim}_{\F_p} ~ E[p](F_{v_n}) =1$ also. Similarly, when the former rank is 2   (when $* =0$) then latter rank is 2 as well. This completes the proof of the lemma. \qed

	\begin{prop}\label{apencomppro}
		Assume the hypothesis (Syl) stated in this section. Then for   a finite prime   $v  \nmid p$ of $F$, 
		\begin{small}{
				\begin{equation}\label{apencomp}
				\quad \delta_{E_1,v} -\delta_{E_2,v}= d(\mathcal F_v^1, \mathcal F_v^2) \pmod 2.
				\end{equation}}\end{small}
	\end{prop}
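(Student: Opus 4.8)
The plan is to express both sides of \eqref{apencomp} modulo $2$ in terms of the common $G_{F_v}$-module $E[p]:=E_1[p]\cong E_2[p]$ and the local conditions $\mathcal F_v^i$, and to use Lemma~\ref{lastlemma} to descend from the level $F_{\cyc,v}$ to the level $F_v$. Throughout, $v\nmid p$.

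\emph{Rewriting the left-hand side.} By definition $\delta_{E_i,v}$ is the $\Z_p$-corank of $H^1(F_{\cyc,v},E_i[p^\infty])$, and this group is $p$-divisible, because $H^2(F_{\cyc,v},E_i[p])=0$ as $\mathrm{cd}_p(G_{F_{\cyc,v}})\le 1$ for $v\nmid p$; hence $\delta_{E_i,v}=\dim_{\F_p}H^1(F_{\cyc,v},E_i[p^\infty])[p]$. The multiplication-by-$p$ sequence $0\to E_i[p^\infty](F_{\cyc,v})/p\to H^1(F_{\cyc,v},E_i[p])\to H^1(F_{\cyc,v},E_i[p^\infty])[p]\to 0$ then gives $\delta_{E_i,v}=\dim_{\F_p}H^1(F_{\cyc,v},E_i[p])-\dim_{\F_p}E_i[p^\infty](F_{\cyc,v})/p$. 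Since $v\nmid p$, the formal-group part $E_i^1(F_{\cyc,v})$ is $p$-divisible and $E_i(F_{\cyc,v})/E_i^1(F_{\cyc,v})$ is a torsion group whose $p$-part is isomorphic to $E_i[p^\infty](F_{\cyc,v})$, so the inclusion $E_i[p^\infty](F_{\cyc,v})\hookrightarrow E_i(F_{\cyc,v})$ induces an isomorphism $E_i[p^\infty](F_{\cyc,v})/p\xrightarrow{\ \sim\ }E_i(F_{\cyc,v})/pE_i(F_{\cyc,v})$. Using $H^1(F_{\cyc,v},E_1[p])\cong H^1(F_{\cyc,v},E_2[p])$ and subtracting the two expressions, I get
\[
\delta_{E_1,v}-\delta_{E_2,v}=\dim_{\F_p}\bigl(E_2(F_{\cyc,v})/p\bigr)-\dim_{\F_p}\bigl(E_1(F_{\cyc,v})/p\bigr)=\dim\mathcal G_v^2-\dim\mathcal G_v^1,
\]
where $\mathcal G_v^i$ is the image of the injective Kummer map $E_i(F_{\cyc,v})/p\hookrightarrow H^1(F_{\cyc,v},E[p])$. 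Functoriality of the Kummer map together with the surjectivity in Lemma~\ref{lastlemma} identifies $\mathcal G_v^i=\mathrm{res}_{F_v\to F_{\cyc,v}}(\mathcal F_v^i)$, with $\mathcal F_v^i$ the image of $E_i(F_v)/p$ in $H^1(F_v,E[p])$ from \eqref{apen39}.

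\emph{Descending to $F_v$.} Set $K:=\ker\bigl(\mathrm{res}\colon H^1(F_v,E[p])\to H^1(F_{\cyc,v},E[p])\bigr)$. By inflation--restriction $K\cong H^1(\Gamma,E[p]^{G_{F_{\cyc,v}}})$ with $\Gamma=\mathrm{Gal}(F_{\cyc,v}/F_v)\cong\Z_p$; since $\dim_{\F_p}M_\Gamma=\dim_{\F_p}M^\Gamma$ for the $\F_p$-linear $\Z_p$-action on a finite module $M$, we get $\dim_{\F_p}K=\dim_{\F_p}E[p]^{G_{F_v}}=:a$. Moreover $H^2(\Gamma,-)=0$, so cup products of inflated classes vanish and $K$ is self-orthogonal for the local Tate pairing on $H^1(F_v,E[p])$; as $\dim K=a=\tfrac12\dim H^1(F_v,E[p])$, $K$ is a Lagrangian, and being the inflation from the unramified quotient $\Gamma$ (recall $F_{\cyc,v}/F_v$ is unramified), it coincides with the unramified Lagrangian $H^1_{\mathrm{ur}}(F_v,E[p])$, which depends only on $E[p]$. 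Since each $\mathcal F_v^i$ is Lagrangian of dimension $a$, we have $\dim\mathcal G_v^i=a-\dim(\mathcal F_v^i\cap H^1_{\mathrm{ur}})$, hence
\[
\delta_{E_1,v}-\delta_{E_2,v}=\dim(\mathcal F_v^1\cap H^1_{\mathrm{ur}})-\dim(\mathcal F_v^2\cap H^1_{\mathrm{ur}}),
\]
while $d(\mathcal F_v^1,\mathcal F_v^2)\equiv a-\dim(\mathcal F_v^1\cap\mathcal F_v^2)\pmod 2$. Thus \eqref{apencomp} is equivalent to
\[
d(\mathcal F_v^1,\mathcal F_v^2)\equiv d\bigl(\mathcal F_v^1,H^1_{\mathrm{ur}}(F_v,E[p])\bigr)+d\bigl(\mathcal F_v^2,H^1_{\mathrm{ur}}(F_v,E[p])\bigr)\pmod 2 .
\]

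\emph{The main point.} This last congruence is the heart of the argument and I expect it to be the main obstacle: it is a relation among the three Lagrangians $\mathcal F_v^1,\mathcal F_v^2,H^1_{\mathrm{ur}}(F_v,E[p])$ in the symplectic $\F_p$-space $H^1(F_v,E[p])$ which does \emph{not} hold for arbitrary triples (the mod-$2$ Maslov index of three Lagrangians need not vanish), so one must use that $\mathcal F_v^i=\kappa_v^i\bigl(E_i(F_v)/p\bigr)$ comes from an elliptic curve congruent mod $p$ to the other. For $v$ of good reduction $\mathcal F_v^i=H^1_{\mathrm{ur}}(F_v,E[p])$ and both defects vanish; for the remaining reduction types one computes $d(\mathcal F_v^i,H^1_{\mathrm{ur}})\bmod 2$ directly from the explicit Kummer description of $E_i(F_v)/p$ --- it is governed by the $p$-part of the local Tamagawa number $c_v(E_i)$, which is a $p$-adic unit except in additive-reduction cases with $p=3$ --- and then checks, reduction type by reduction type (as in \S\ref{rootnumbereisigma}, but with lighter bookkeeping), that the three quantities $d(\mathcal F_v^1,\mathcal F_v^2)$, $d(\mathcal F_v^1,H^1_{\mathrm{ur}})$, $d(\mathcal F_v^2,H^1_{\mathrm{ur}})$ satisfy the displayed relation. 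The delicate cases are precisely $v\mid 2$ and $v\mid 3$ with additive reduction, which I would handle by the same careful arguments used in \S\ref{rootnumbereisigma}.
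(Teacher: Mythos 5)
Your reduction is substantively the same as the paper's: you correctly apply Lemma~\ref{lastlemma} to make $E_i(F_v)/p\to E_i(F_{\cyc,v})/p$ surjective, identify the kernel of restriction on $H^1$ with the unramified Lagrangian $\mathcal F_v^{un}=H^1_{\mathrm{ur}}(F_v,E[p])$, and arrive at $\delta_{E_1,v}-\delta_{E_2,v}\equiv\dim_{\F_p}(\mathcal F_v^1\cap\mathcal F_v^{un})-\dim_{\F_p}(\mathcal F_v^2\cap\mathcal F_v^{un})\pmod 2$, which matches the paper's derivation through diagram~\eqref{cdlast}. So the proposal correctly isolates the crux: the congruence $d(\mathcal F_v^1,\mathcal F_v^2)\equiv d(\mathcal F_v^{un},\mathcal F_v^1)+d(\mathcal F_v^{un},\mathcal F_v^2)\pmod 2$.

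This is where the gap lies. You assert the space $H^1(F_v,E[p])$ is symplectic and that the mod-$2$ Maslov index of three Lagrangians may fail to vanish, so you propose a case-by-case verification by reduction type which you do not carry out. But in fact, because the Weil pairing is alternating and the cup product $H^1\times H^1\to H^2$ is anti-commutative, the induced Tate self-pairing on $H^1(F_v,E[p])$ is \emph{symmetric}: the space is a split quadratic (orthogonal) space, not a symplectic one. In a split quadratic space the Lagrangian Grassmannian has exactly two $SO$-orbits, distinguished by $\dim(L\cap L')\bmod 2$, so $d$ is automatically additive on \emph{every} triple of Lagrangians, with no arithmetic input from the $E_i$ needed. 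The paper invokes exactly this fact, attributed to Klagsbrun--Mazur--Rubin, via \cite[Prop.~2.9]{ne} (displayed as~\eqref{apensilly1}). Replace your sketched and unfinished case analysis with that citation and your argument closes; as written there is a genuine unfilled gap at the crucial step.
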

		\proof We identify the isomorphic $G_F$ modules $ E_1[p] $ and $E_2[p]$ in this proposition and denote both of them as $E[p]$ in this proof. Recall the unramified cohomology $\mathcal F_v^{un}:=H^1(G_{F_v}/{I_v}, (E[p])^{I_v}) $ is a  Lagrangian subspaces of $H^1(G_{F_v}, E[p])$ \cite[\S 2]{ne}. Then further recall, by a result of Klagsbrun-Mazur-Rubin (see \cite[Prop. 2.9]{ne}),
		\begin{small}{\begin{equation}\label{apensilly1}
				d(\mathcal F_v^1, \mathcal F_v^2) =d(\mathcal F_v^{un}, \mathcal F_v^1) - d(\mathcal F_v^{un}, \mathcal F_v^2)=  \mathrm{dim}_{\mathbb F_p} \frac{\mathcal F_v^{1}}{\mathcal F_v^{un}\cap \mathcal F_v^1}  -  \mathrm{dim}_{\mathbb F_p} \frac{\mathcal F_v^{2}}{\mathcal F_v^{un}\cap \mathcal F_v^2} \pmod 2.
				\end{equation}
		}\end{small}
		We choose a fix a prime $w$ in $F_\cyc$ dividing the prime $v$ in $F$.  As our calculations below does not depend on the choice of $w$ above $v$, by abuse of notation, we will denote $F_{\cyc,w}$ by $F_{\cyc,v}$. By  Kummer theory, we have an exact sequence
		\begin{small}{
				\begin{equation}\label{apenkum2}
				0 \lra \frac{E_i(F_{\cyc,v})}{pE_i(F_{\cyc,v})} \stackrel{\kappa^i_{\cyc,v}}\lra H^1(F_{\cyc,v}, E_i[p]) \lra H^1(F_{\cyc,v}, E_i[p^\infty])[p] \lra 0 
				\end{equation}}\end{small}
		Further, by well known results of Greenberg (cf. \cite{MR2807791}),   $H^1(F_{\cyc,v}, E_i[p])$ is finite and as $v  \nmid p$, $\delta_{E_i,v} = s_v \times \text{dim}_{\F_p} ~ H^1(F_{\cyc,v}, E_i[p^\infty])[p] ,$ where $s_v$ is the number of primes above $v$ in $F_\cyc$ (see discussion above \eqref{relation}). Thus, it is immediate that 
		\begin{small}{\begin{equation}\label{apen1side}
				\delta_{E_1,v} -\delta_{E_2,v} = \text{dim}_{\F_p} ~ \frac{E_1(F_{\cyc,v})}{pE_1(F_{\cyc,v})} - \text{dim}_{\F_p} ~ \frac{E_2(F_{\cyc,v})}{pE_2(F_{\cyc,v})} \pmod 2.
				\end{equation}}\end{small}
		Now we have the commutative diagram: 
		\begin{small}{\begin{equation}  \label{cdlast}
				\setlength{\arraycolsep}{1pt}
				\begin{array}{*{9}c}
				0 &\longrightarrow & \frac{E_i(F_{v})}{pE_i(F_{v})}
				&  \overset{\kappa^i_{v}}\longrightarrow & H^1(F_{v}, E[p])
				& {\longrightarrow} & H^1(F_{v}, E_i[p^\infty])[p] \lra & 0\\
				& & \Big\downarrow{f_i} & & \Big\downarrow{g_i} & & \Big\downarrow{h_i}  & & \\
				0 &\longrightarrow &  \frac{E_i(F_{\cyc,v})}{pE_i(F_{\cyc,v})}
				& \overset{\kappa^i_{\cyc,v}}{\longrightarrow} & H^1(F_{\cyc,v}, E[p]) 
				& \longrightarrow & H^1(F_{\cyc,v}, E_i[p^\infty])[p] \lra & 0.
				\end{array}
				\end{equation}}\end{small}
		\noindent By Lemma \ref{lastlemma}, $f_i$ is surjective in \eqref{cdlast}, for $i=1,2$. So we deduce from \eqref{apen1side}, $\delta_{E_1,v} -\delta_{E_2,v} = \text{dim}_{\F_p} ~ \text{Img}(f_1)  - \text{dim}_{\F_p} ~ \text{Img}(f_2) \pmod 2$, {{} where Img$(f_i)$ stands for the Image$(f_i)$}. Moreover, using \eqref{apen39}, we can express
		\begin{small}{\begin{equation}  \label{apeninchingtowardsend2}
				\delta_{E_1,v} -\delta_{E_2,v} =\text{dim}_{\F_p} ~  \mathcal F_{v}^1 - \text{dim}_{\F_p} ~  \mathcal F_{v}^2 - \big(\text{dim}_{\F_p} ~ \text{ker}(f_1)  - \text{dim}_{\F_p} ~ \text{ker}(f_1)\big) \pmod2.
				\end{equation}}\end{small} 
		Further, we have Gal$(F_v^\mathrm{unr}/F_{\cyc,v}) \cong \underset{\ell \neq p}{\prod}\Z_{\ell}$.  Hence we deduce that the natural restriction map from $H^1(F_{\cyc,v}, E[p]) \rightarrow H^1(I_v, E[p])$ is injective. Consequently, the kernel of $g_i$ is $\mathcal F_{v}^{un} $. Now, notice that 
		\begin{small}{\begin{equation}  \label{apeninchingtowardsend}
				\text{dim}_{\F_p} ~ \text{ker}(f_i) = \text{dim}_{\F_p} ~ \kappa^i_{v}\big(\text{ker}(f_i)\big)=\text{dim}_{\F_p} ~ \Big(\text{ker}(g_i) \cap  \text{Img}(\kappa^i_{v})\Big)= \mathrm{dim}_{\mathbb F_p} {\Big(\mathcal F_{v}^{un}\cap \mathcal F_{v}^i\Big)}.\end{equation}}\end{small} 
		Applying \eqref{apeninchingtowardsend2} and \eqref{apeninchingtowardsend}  in the expression of $d(\mathcal F_{v}^1, \mathcal F_{v}^2)$ in \eqref{apensilly1}, it is immediate   that 
		$$
		\delta_{E_1,v} -\delta_{E_2,v} = d(\mathcal F_{v}^1, \mathcal F_{v}^2) \pmod 2.
		$$
		This completes the proof of the proposition. \qed
	
	\begin{remark}
		In the general case, when $F\neq K$ and $\sigma $ non-trivial, irreducible, self dual representation of $\mathrm{Gal}(K/F)$, then note that if (Syl) is satisfied i.e. $E_1[p] \cong E_2[p]$ as symplectic $G_F$-modules,  then $E_1[p] \otimes \widetilde{\sigma} \cong E_2[p] \otimes \widetilde{\sigma} $ also symplectically as $G_F$-modules. Thus in a similar way to Proposition \ref{apencomppro},  $\delta_{E_1,v}(\sigma) -\delta_{E_2,v}(\sigma) \pmod 2$ can be expressed in terms of arithmetic local constants. 
	\end{remark}

	\par \textbf{Acknowledgements:} S. Jha is supported by SERB ECR and SERB MATRICS grant. T. Mandal acknowledges institute postdoctoral fellowship at  IIT Kanpur. S. Shekhar is supported by DST INSPIRE faculty grant. The  numerical computations were done using   SAGE  and   LMFDB. We thank Vladimir Dokchister for various discussions. 
	\def\cprime{$'$}

% \bibliographystyle{alpha}
% \bibliography{Eisensteinquestion.bib}
\end{document}